\icmltitlerunning{Newton Method over Networks is Fast up to the Statistical Precision}
\newtheorem{theorem}{Theorem}
\newtheorem{lemma}{Lemma}
\newtheorem{corollary}[theorem]{Corollary}
\newtheorem{remark}[theorem]{Remark}
\newtheorem{assumption}[theorem]{Assumption}
\newcommand{\alg}{DiRegINA }
\DeclareMathOperator*{\argmin}{argmin}
\newcommand{\norm}[1]{\left\lVert#1\right\rVert}
\newcommand{\bx}{x}
\newcommand{\Lhessian}{L}
\newcommand{\Lgrad}{Q}
\newcommand{\tO}[1]{\tilde{O}\left(#1\right)}
\newcommand{\hxnu}[1]{\bx^{#1+}}
\newcommand{\by}{y}
\newcommand{\bW}{{W}}
\newcommand{\bJ}{{J}}
\newcommand{\bI}{I}
\newcommand{\0}{\mathbf{0}}
\newcommand{\bepsilon}{\delta}
\newcommand{\gradtrack}{s}
\newcommand{\tF}{\widetilde{F}}
\newcommand{\VV}{\mathcal{V}}
\newcommand{\GG}{\mathcal{G}}
\newcommand{\EE}{\mathcal{E}}
\newcommand{\bbeta}{B}
\newcommand{\bH}{{H}}
\begin{document}

\twocolumn[
\icmltitle{Newton Method over Networks is Fast up to the Statistical Precision}



\icmlsetsymbol{equal}{*}

\begin{icmlauthorlist}
\icmlauthor{Amir Daneshmand}{purdue}
\icmlauthor{Gesualdo Scutari}{purdue}
\icmlauthor{Pavel Dvurechensky}{Weierstrass,HSE}
\icmlauthor{Alexander Gasnikov}{MIPT,HSE}
\end{icmlauthorlist}

\icmlaffiliation{purdue}{School of Industrial Engineering, Purdue University, West-Lafayette, IN, USA}
\icmlaffiliation{Weierstrass}{Weierstrass Institute for Applied Analysis and Stochastics, Berlin, Germany}
\icmlaffiliation{HSE}{Higher School of Economics (HSE) University, Moscow, Russia}
\icmlaffiliation{MIPT}{Moscow Institute of Physics and Technology, Dolgoprudny, Russia}

\icmlcorrespondingauthor{Gesualdo Scutari}{gscutari@purdue.edu}

\icmlkeywords{Machine Learning, ICML, Distributed Optimization, Statistical Similarity, Distributed Newton Method, Statistical Precision, Gradient Tracking}

\vskip 0.3in
]



\printAffiliationsAndNotice{} 

\begin{abstract}
We propose a distributed   cubic regularization of the Newton method for solving (constrained) empirical risk minimization  problems over a network of agents, modeled as undirected  graph. The algorithm employs   an  {\it inexact, preconditioned} Newton step at each agent's side:   the gradient of the centralized  loss is iteratively estimated via a gradient-tracking consensus mechanism  and the Hessian   is   subsampled  over the local data sets. No Hessian matrices are thus  exchanged over the network.    We derive global complexity bounds for convex and strongly convex   losses. Our analysis reveals an interesting interplay between sample and iteration/communication complexity: 
{\it statistically accurate} solutions are achievable roughly in   the same number of iterations of the centralized cubic Newton, with a communication cost per iteration of the order of $\widetilde{\mathcal{O}}\big(1/\sqrt{1-\rho}\big)$, where $\rho$  characterizes the connectivity of the network.   This represents a significant communication saving with respect to that of  existing, statistically oblivious, distributed Newton-based methods over networks. 
\end{abstract}\vspace{-.7cm}

\section{Introduction}\label{sec:intro}\vspace{-.1cm}
  
 We study  Empirical Risk Minimization (ERM) problems over a network of $m$ agents, modeled as undirected graph. Differently from master/slave systems, no centralized node is assumed in the network (which will be referred to as {\it meshed} network).  Each agent $i$   has access to $n$  i.i.d. samples  $z_i^{(1)},\ldots ,z_i^{(n)}$    drawn from an unknown, common distribution   on $\mathcal{Z}\subseteq\mathbb{R}^p$;  the associated   
  empirical risk  reads
  \vspace{-0.1cm}
 \begin{equation} 
 f_i(x)\triangleq \frac{1}{n}\sum_{j=1}^n \ell\big(x;z_{i}^{(j)}\big),\vspace{-0.1cm}
\label{eq:local_loss} \vspace{-0.1cm}
\end{equation}
 where $\ell:\mathbb{R}^d\times \mathcal{Z}\to \mathbb{R}$ is the loss function, assumed to be (strongly) convex in $x$, for any given $z\in \mathcal{Z}$.  Agents aim to   minimize  the total empirical risk  over the $N\!=m n$ samples, resulting in the following ERM over networks:\vspace{-0.1cm}\begin{equation} 
\widehat{x}\in \underset{\bx\in \mathcal{K}}{\text{argmin}}  \,F\left(\bx\right)\triangleq \frac{1}{m}\sum_{i=1}^m f_i\left(\bx\right),\vspace{-0.2cm}
\tag{P}\label{eq:P}
\end{equation}
where $\mathcal{K}\!\subseteq \mathbb{R}^d$ is  convex and  known to   the agents.  

  Since the functions $f_i$ can be accessed only locally and routing local data to other agents   is infeasible or highly inefficient, solving (\ref{eq:P}) calls for the design of distributed algorithms that alternate between a local computation procedure at each agent's side, and a round of  communication among neighboring nodes. 
 The cost of communications is often considered the bottleneck for distributed computing, if     compared with local (possibly parallel) computations (e.g., \cite{Bekkerman_book11,Lian17}). Therefore, our goal is   developing  {\it communication-efficient}  distributed algorithms 
that solve \eqref{eq:P} within the  {\it statistical} precision.

   
   The provably faster convergence rates of second order methods over gradient-based algorithms  make them potential candidates for communication saving (at the cost of more computations).   
   Despite the success of Newton-like methods to solve ERM in a centralized setting (e.g., \cite{ADA-Newton16,Bottou18}), including  master/slave architectures  \cite{DISCO,DANE,Ma17,DANCE20,pmlr-v108-soori20a},    their distributed counterparts on meshed networks  are not on par:     convergence  rates provably faster than those of first order methods are achieved at  high  communication  costs \cite{Uribe20,Zhang20}, cf. Sec.~\ref{sec:lit_review}. 
   
 
 We claim that stronger guarantees of second order methods over meshed networks can be certified if a {\it statistically-informed} design/analysis is put forth, in contrast with  statistically agnostic approaches that look  at (\ref{eq:P}) as    deterministic optimization  and  target    any arbitrarily small   suboptimality. 
 To do so, we  build  on the following two key insights. 
     
    $\bullet$ \textbf{Fact 1 (statistical accuracy):} When  it comes to learning problems,   the 
        ERM (\ref{eq:P}) is a surrogate of the population minimization \vspace{-0.2cm}
\begin{equation} 
x^\star\in  \underset{\bx\in \mathcal{K}}{\text{argmin}}\,  F_P\left(\bx\right)\triangleq \mathbb{E}_{Z\sim \mathbb{P}} \, \ell\left(\bx;Z\right).\vspace{-0.1cm}
 \label{eq:Population}
\end{equation}
 The ultimate goal is to estimate ${x}^\star$ via the ERM (\ref{eq:P}).  Denoting by $x_{\varepsilon}\in \mathcal{K}$ the estimate returned by the algorithm, we have the risk decomposition (neglecting the approximation error due to the use of a specific set of models $x\in \mathcal{K}$):\vspace{-.2cm}
\begin{equation}\label{eq:suboptimality-rule}
	 \begin{array}{lll}
	&	F_P(x_{\varepsilon})-F_P(x^\star) \medskip \\
	&= \underset{\leq \text{statistical error}}{\big\{F_P(x_{\varepsilon})-F(x_{\varepsilon})\big\}} \medskip +  {\big\{F(x_{\varepsilon})-F(x^\star)\big\}}\\& \quad + \underset{\leq \text{statistical error}}{\big\{F(x^\star)-F_P(x^\star)\big\}} \medskip\\ & \leq \mathcal{O}(\text{statistical error})+ \underset{=\text{optimization error}}{\big\{F(x_{\varepsilon})-F(\widehat{x})\big\}}\vspace{-.2cm}
	\end{array}
\end{equation}
where the statistical error  is usually of the order   $\mathcal{O}(1/\!\sqrt{N})$ or $\mathcal{O}(1/N)$ (cf.~Sec.~\ref{sec:setup}).
 It is thus sufficient to reach an optimization accuracy  $F(x_{\varepsilon})-F(\widehat{x})=\mathcal{O}(\text{statistical error})$. This can potentially save communications. 
 
 $\bullet$ \textbf{Fact 2 (statistical similarity):}  Under  mild  assumptions on the loss functions  and i.i.d samples across the agents (e.g., \cite{DISCO,pmlr-v119-hendrikx20a}),  it holds  with high probability (and uniformly on $\mathcal Z$)\vspace{-0.1cm}
 \begin{equation}
 \label{eq:intro_stat_sim}
 	 \left\|\nabla^2f_i(\bx) - \nabla^2 F(\bx)\right\|\leq  {\beta} = \widetilde{\mathcal{O}}(1/\sqrt{n}),\;\; \forall\bx\in\mathcal{K},\vspace{-0.1cm}\end{equation}      
with $\widetilde{\mathcal{O}}$  hiding  log-factors and the dependence on $d$.  
In words,  the local empirical losses $f_i$ are statistically similar to each other and the average $F$, especially for large $n$.

The key insight of Fact 1   is that one can 
target  suboptimal solutions of (\ref{eq:P}) within the statistical error. 
This is different from seeking a distributed optimization method that achieves any arbitrarily small empirical suboptimality. Fact 2 suggests  a further reduction in the communication complexity via  {\it statistical preconditioning}, that is,    subsampling the Hessian of $F$ over the local data sets, so that no Hessian matrix has to be transmitted over the network.  This paper shows that, if  synergically combined,  these two facts can improve the communication complexity of distributed second order methods over meshed networks. \vspace{-0.2cm}

\subsection{Major contributions}\vspace{-0.1cm}
We propose  and analyze a   decentralization of the cubic regularization of the Newton method \cite{Nesterov--Cubic06} over meshed networks. The algorithm employs  a local computation procedure performed in parallel by the agents coupled with a round of (perturbed) consensus mechanisms that aim to track {\it locally} the gradient of $F$ (a.k.a. gradient-tracking) as well as   enforce an agreement on the local optimization directions.   The optimization procedure is an   inexact, preconditioned (cubic regularized)  Newton step whereby the gradient of $F$   is estimated by  gradient tracking  while the Hessian  of $F$  is   subsampled  over the local data sets.    Neither a line-search nor communication of Hessian matrices over the network are performed.

We established for the first time  {\it global} convergence for different classes of ERM problems, as summarized in   Table~\ref{table:rate}. Our results are of two types: i) classical complexity analysis (number of communication rounds)  for arbitrary solution accuracy (right panel); ii) and  complexity bounds for statistically optimal solutions (left panel, $V_N$ is the statistical error). Postponing to   Sec~\ref{sec:main_results} a detailed discussion of these results, here we highlight some  key novelties  of our findings.  {\bf Convex ERM:}  For convex $F$, if arbitrary  $\varepsilon$-solutions are targeted, the algorithm exhibits a two-speed behavior:   1) a first rate of the order of  $\mathcal{O}((1/\sqrt{1-\rho})\cdot\sqrt{LD^3/\varepsilon^{1+\alpha}})$, as long as  $\varepsilon=\Omega(\Lhessian D^3\beta^2)$;  up to the network dependent factor $1/\sqrt{1-\rho}$, this (almost) matches the rate of the centralized   Newton method   \cite{Nesterov--Cubic06}; and 2) the slower rate  $\mathcal{O}(({1}/{\sqrt{1-\rho}})\cdot(LD^3\beta^2)/\varepsilon)$, which is due to the local subsampling of the global  Hessian $\nabla^2 F$; this term is dominant for smaller values of   $\varepsilon$.  The interesting fact is that  $\varepsilon=\Omega(\Lhessian D^3\beta^2)$ is of the order of the statistical error $V_N$. Therefore,  
rates of the order of centralized ones are provably achieved  up to statistical precision (left panel).  {\bf Strongly Convex ERM ($\beta<\mu$):} 
The communication complexity shows a three-phase rate behaviour  (right panel); for arbitrarily small $\varepsilon>0$, the worst-case communication complexity 
is linear, of the order of $\widetilde{\mathcal{O}}\big(({1}/{\sqrt{1-\rho}})\cdot (\beta/\mu)\cdot \log(1/\varepsilon)\big)$.   Faster rates are certified when $\varepsilon=\mathcal{O}(V_N)$ (left panel). Note that the region of superlinear convergence is a false improvement when  the first term $m^{1/4} \sqrt{\Lhessian D/\mu}$ is dominant, e.g.,    $F$ is   ill-conditioned and  $n$ is not large. 
 This term is unavoidable  \cite{Nesterov--Cubic06}--unless more refined function classes are considered, such as  self-concordant or quadratic ($L=0$). The left panel shows improved rates in the latter case or under an initialization  within a $\mathcal{O}(1/\sqrt{n})$-neighborhood of the solution.   {\bf Strongly Convex ERM ($\beta\geq\mu$):}  This is   a common setting when   $F_P$ is convex and a regularizer is used in the ERM (\ref{eq:P}) for learnability/stability purposes;  typically,   $\mu=\mathcal{O}(1/\sqrt{N})$, $\beta=\mathcal{O}(1/\sqrt{n})$. We proved linear rate for arbitrary $\varepsilon$-values. Differently from the majority of first-order methods over meshed networks (cf. Sec. \ref{sec:lit_review}),   this rate does not depend on the condition number of $F$ but on the generally   more favorable ratio   $\beta/\mu$. 
Furthermore, when $\varepsilon=\mathcal{O}(V_N)$, the rate does not improve  over   the convex case. \\
In summary, we   propose   a    second-order method solving   convex and strongly convex problems over meshed networks that,  for the first time, enjoys    global complexity    bounds   and   communication complexity close to oracle complexity of centralized methods up to the statistical precision.
\vspace{-0.2cm}

\renewcommand{\arraystretch}{1.2}
\begin{table*}[t]  
\caption{\small Communication complexity  of \alg to $\varepsilon>0$ suboptimality for  (strongly) convex ERM. {\bf Right column:}  arbitrary $\varepsilon$ values. {\bf Left column:} $\varepsilon=\Omega(V_N)$, $V_N$ is the statistical error  [cf.~\eqref{eq:suboptimality-rule}]. The other parameters are:   $\mu$ and $L$ are   the strong convexity constant of $F$ and Lipschitz constant of $\nabla^2 F$, respectively; $D$ and  $D_p$ are  estimates of the optimality gap at the initial point; 
$\beta$ measures the similarity of $\nabla^2 f_i$ [cf. \eqref{eq:intro_stat_sim}]; $\rho$ characterizes the connectivity of the network; and  $\alpha>0$ is an arbitrarily small constant.}
\label{table:rate}
\vspace{-0.3cm}
\begin{center}
\begin{small}
\resizebox{2.1\columnwidth}{!}{	\begin{tabular}{l l| c|c}
	\toprule
	{\bf Problem}   &  & {\bf  $\varepsilon=\Omega(V_N)$ (statistical error)}&    {\bf $\varepsilon>0$ (arbitrary)  }
	\\
	\midrule\hline
	\parbox{2cm}{\vspace{0.1cm}\scriptsize \bf Convex\  \\$\mu=0$ \\  $V_N= {\mathcal{O}}(1/{\sqrt{N}})$ }& \parbox{1.6cm}{\scriptsize Thm. \ref{thm:cvx_case} \vspace{1pt}\\Cor. \ref{corr:cvx_case}}& $\widetilde{\mathcal{O}}\left({\frac{1}{\sqrt{1-\rho}}\cdot \sqrt{\frac{\Lhessian D^3}{V_N^{1+\alpha}}}}\right)$ & $\widetilde{\mathcal{O}}\bigg(\frac{1}{\sqrt{1-\rho}} \cdot  \left\{ \sqrt{\frac{\Lhessian D^3}{\varepsilon^{1+\alpha}}}+\frac{\Lhessian D^3\beta}{\varepsilon^{1+\alpha/2}}\right\}\bigg)$ 
	\\
	\hline
	&\multirow{2}{1.5cm}{\scriptsize $\Lhessian>0$\smallskip \\Thm. \ref{thm:scvx_case_beta_leq_mu}\smallskip \\Cor.\ref{corr:scvx_case_beta_leq_mu}}  & \multirow{2}{6.2cm}{$
	\widetilde{\mathcal{O}}\bigg(\frac{1}{\sqrt{1-\rho}}\Big\{m^{1/4}\sqrt{\frac{\Lhessian D}{\mu}}
	+\log\log\left(\frac{\mu^3}{m\Lhessian^2V_N}\right)\Big\}\bigg)$}
	& 
	$\widetilde{\mathcal{O}}\bigg(\frac{1}{\sqrt{1-\rho}}\Big\{m^{1/4}\cdot\sqrt{\frac{\Lhessian D}{\mu}}
	+\log\log\left(\frac{\mu^2}{\beta^2}\min\left\{1,\frac{\beta^2\mu}{m\Lhessian^2 }\cdot \frac{1}{\varepsilon}\right\}\right)$
\\
\multirow{3}{2cm}{\scriptsize {\bf Strongly-convex}\smallskip \\ $0<\beta<\mu$\smallskip \\$V_N={\mathcal{O}}({1}/{N})$\smallskip \\ $\mu=\mathcal{O}(1)$} & & & $+\frac{\beta}{\mu}\log\left( \frac{\beta^2\mu}{m\Lhessian^2\varepsilon}\right)\Big\}\bigg)$
	\\ 	\cline{2-4}
	 & \parbox{1.5cm}{\scriptsize ~ \\$\Lhessian>0$\\ +initialization \\ {(Cor. \ref{corr:scvx_case_beta_leq_mu_initialization_iid})}}  &  $	\widetilde{\mathcal{O}}\left({\frac{1}{\sqrt{1-\rho}}\cdot \left\{\log\log\left(\frac{\mu^3}{m\Lhessian^2}\cdot \frac{1}{V_N}\right)\right\}}\right),\quad \beta=\mathcal{O}\left(\frac{1}{\sqrt{n}}\right)$ & $\widetilde{\mathcal{O}}\left({\frac{1}{\sqrt{1-\rho}} \left\{\log\log\left(\frac{\mu^2}{\beta^2}\cdot \min\left(1,\frac{\beta^2\mu}{m\Lhessian^2\varepsilon}\right)\right)+\frac{\beta}{\mu}\log\left(\frac{\beta^2\mu}{m\Lhessian^2\varepsilon}\right)\right\}}\right)$
	 	\\ 	\cline{2-4}
	 & \parbox{1.5cm}{\scriptsize ~\\ $\Lhessian=0$\\{(Thm. \ref{thm:scvx_case_beta_leq_mu_Qcase}, appendix \ref{sec:scvx_case_beta_leq_mu_Qcase_proof})}} &  $\widetilde{\mathcal{O}}\left(\frac{1}{\sqrt{1-\rho}}\cdot \log\log\left(\frac{D_p}{V_N}\right)\right),\quad \beta=\mathcal{O}\left(\frac{1}{\sqrt{n}}\right)$ 
	 &$\widetilde{\mathcal{O}}\left({\frac{1}{\sqrt{1-\rho}}\cdot \left\{\log\log\left(\frac{D_p}{\varepsilon
	}\right)+\frac{\beta}{\mu}\log\left(\frac{D_p\beta^2}{\mu^2\varepsilon}\right)\right\}}\right)$
\\
		\hline
		\multirow{2}{2cm}{\scriptsize {\bf Strongly-convex}\\\smallskip {\bf(regularized) }\smallskip \\$0<\mu\leq \beta$   \smallskip \\$V_N={\mathcal{O}}({1}/\sqrt{N})$} & \parbox{1.5cm}{\scriptsize $\Lhessian>0$\\(Thm. \ref{thm:scvx_case_beta_geq_mu})} & \parbox{5cm}{$\widetilde{\mathcal{O}}\bigg(\frac{1}{\sqrt{1-\rho}}  m^{1/2}\sqrt{\frac{\Lhessian D}{V_N}}\bigg),\quad  \left\{\hspace{-0.5cm}\begin{array}{cc}
	&\mu=\mathcal{O}(V_N)\\ &\beta=\mathcal{O}(\frac{1}{\sqrt{n}})\end{array}\right.$}& $\widetilde{\mathcal{O}}\bigg(\frac{1}{\sqrt{1-\rho}}\left\{\sqrt{\frac{\Lhessian D}{\mu}}\left(1+m^{1/4}\sqrt{\frac{\beta}{\mu}}\right)+\frac{\beta}{\mu}\log\Big(\frac{\beta^2\mu}{m\Lhessian^2\varepsilon}\Big)\right\}\bigg)$
		\\
		\cline{2-4}
		&\parbox{1.5cm}{\scriptsize $\Lhessian=0$\\{(Thm. \ref{thm:scvx_case_beta_geq_mu_QUAD}, appendix \ref{subsec:scvx_beta_geq_mu_Qcase})}} &  \parbox{6.5cm}{$\widetilde{\mathcal{O}}\left(\frac{1}{\sqrt{1-\rho}}\cdot m^{1/2}\cdot\log\left(\frac{1}{V_N}\right)\right) ,\quad  \left\{\hspace{-0.5cm}\begin{array}{cc}
	&\mu=\mathcal{O}(V_N)\\ &\beta=\mathcal{O}(\frac{1}{\sqrt{n}})\end{array}\right.$}& $\widetilde{\mathcal{O}}\left(\frac{1}{\sqrt{1-\rho}}\cdot \frac{\beta}{\mu}\cdot\log\left(\frac{1}{\varepsilon}\right)\right) $
		\\
	\bottomrule
	\end{tabular}}
\end{small}
\end{center}
\vskip -0.1in
\end{table*}


\subsection{Related Works}\label{sec:lit_review}\vspace{-0.2cm}
The literature of  distributed optimization   is vast; here we review  relevant  methods applicable to  {\it meshed networks}, omitting the less relevant work considering only  master-slave systems (a.k.a star networks).  \\ 
\textbf{$\bullet$ Statistically oblivious methods:}
Despite being vast and providing different communication and oracle complexity bounds, the literature (e.g.,  \cite{jakovetic2014fast,shi2015extra,Arjevani-ShamirNIPS15,nedic2017achieving,scaman2017optimal,lan2017communication,uribe2020dual,rogozin2020towards}) on decentralized {\bf first-order methods} for minimizing $Q$-Lipschitz-smooth and $\mu$-strongly convex global objective $F$ mostly focuses on the particular case where $n=1$ in \eqref{eq:local_loss} and $\mathcal{K}=\mathbb{R}^d$, and does not take into account statistical similarity of the  risks. The best convergence  results for nonaccelerated first-order methods certify linear rate, scaling with the condition number $\kappa=Q/\mu$ ($Q$ is the Lipschitz constant of $\nabla F$);   Nesterov-based acceleration  improves the dependence to $\sqrt{\kappa}$ \cite{gorbunov2020recent}.  This performance can still be unsatisfactory when 
$1+\beta/\mu< \kappa$ (resp. $1+\beta/\mu< \sqrt{\kappa}$).
 This is the typical situation of ill-conditioned problems, such as many learning problems where 
  the regularization parameter that is optimal for test predictive performance is very small \cite{pmlr-v119-hendrikx20a}.  
  For instance,  consider the ridge-regression problem  with optimal regularization parameter $\mu=1/\sqrt{mn}$ (Table 1 in \cite{DISCO}), we have: $\kappa=\mathcal{O}(\sqrt{m\cdot n})$ while $\beta/\mu=\mathcal{O}(\sqrt{m})$. Notice that the former  grows with the local sample size $n$, while  the latter is independent.

A number of {\bf second-order methods} were proposed for  distributed optimization over  meshed networks, with typical results being local superlinear convergence \cite{jadbabaie2009distributed,wei2013distributed,tutunov2019distributed} or global linear convergence no better than that of first-order methods \cite{MokhtariNewton2,Network-Newton17,MokhtariDQM16,MokhtariNewton3,So2020}. 
Improved upon first-order methods global bounds are achieved by exploiting expensive sending local Hessians over the network--such as  \cite{Zhang20}, obtaining communication complexity bound ${\mathcal{O}}(({mL\|\nabla f(x_0)\|}/{\mu^2})+\log\log({1}/{\varepsilon}))$--or employing double-loop schemes \cite{uribe2020distributed} wherein at each iteration, a distributed first-order method is called to find the Newton direction, obtaining iteration complexity   ${\mathcal{O}}(\sqrt[3]{{LD^3}/{\varepsilon}})$ at the price of excessive communications per iteration. Furthermore, these schemes cannot handle constraints. 
To the best of our knowledge,   no  distributed second-order method over meshed networks has been proved to globally converge  with communication complexity bounds  even up to a network dependent factor close to the standard  \cite{Nesterov--Cubic06} bounds ${\small {\mathcal{O}}\big(\sqrt{({LD^3})/{\varepsilon}}\big)}$ for convex and ${\mathcal{O}}(\sqrt{{LD}/{\mu}}+\log\log({\mu^3}/{L^2\varepsilon}))$ for $\mu$-strongly convex problems.   Table \ref{table:rate} shows the first results of this genre. 






\textbf{$\bullet$ Methods exploiting statistical similarity:}
Starting the works \cite{DANE,Arjevani-ShamirNIPS15} several papers studied the idea of statistical preconditioning to decrease the communication complexity over    star networks, 
for different problem classes; example include \cite{DANE,reddi2016aide,yuan2019convergence} (quadratic losses), \cite{DISCO} (self-concordant losses),   \cite{GIANT}   (under $n>d$), and  \cite{Fan2020} (composite optimization), with  \cite{pmlr-v119-hendrikx20a,dvurechensky2021hyperfast} employing acceleration.   None of these methods are implementable over  meshed networks, because they   rely on  a centralized (master) node. To our knowledge, Network-DANE \cite{NetDane} and SONATA \cite{sun2019distributed} are the only two methods   that leverage statistical similarity to enhance convergence of distributed methods 
over meshed networks; \cite{NetDane} studies  strongly convex quadratic losses while \cite{sun2019distributed} considers general objectives, achieving a communication complexity of $\widetilde{\mathcal{O}}((1/\sqrt{1-\rho})\cdot\beta/\mu\cdot \log(1/\varepsilon))$. Both schemes call at every  iteration for an exact solution of local strongly convex problems while our subproblems are based on second-order approximations, 
computationally thus less demanding. Nevertheless, our algorithm retains  same rate dependence on $\beta/\mu$. Our study covers also  non-strongly convex losses. 
\vspace{-0.2cm}

\section{Setup and Background}\label{sec:setup} \vspace{-0.1cm}
\subsection{Problem setting} \vspace{-0.1cm} We study convex and strongly convex instances of the ERM \eqref{eq:P}; specifically, we make   the following assumptions [note that, although explicitly omitted,  each $f_i(x)$ and thus $F$   depend  on the sample $z\in \mathcal{Z}$ via $\ell(x,z)$; all the assumptions below are meant to hold uniformly on $\mathcal{Z}$]. 
 
 \begin{assumption}[convex ERM]\label{convex-case} The following hold:\vspace{-0.4cm}\begin{itemize} 
\item[(i)]   $\emptyset\neq \mathcal{K}\subseteq\mathbb{R}^d$ is closed and convex;\vspace{-0.2cm}
\item[(ii)] Each $f_i:\mathbb{R}^d\times \mathcal{Z}\to \mathbb{R}$ is twice differentiable  and $\mu_i$- strongly convex on (an open set containing) $\mathcal{K}$, with $\mu_i\geq 0$; \vspace{-0.2cm}
\item[(iii)] Each $\nabla f_i$ is  $Q_i$-Lipschitz continuous on $\mathcal{K}$, where $\nabla f_i$ is the gradient with respect to $x$; let $\Lgrad_{\max}\triangleq \max_{i=1,\ldots, m}\Lgrad_i$;\vspace{-0.2cm}
\item[(iv)]   $F$ has bounded level sets. 
 \end{itemize}\end{assumption}
 
 	 \begin{assumption}[strongly convex ERM]\label{sconvex-case} Assumption~\ref{convex-case}(i)-(iii) holds and $F$ is  $\mu$-strongly convex on  $\mathcal{K}$, with $\mu> 0$.  \vspace{-0.5cm} \end{assumption}
 	 
  The following  condition is standard when studying second order methods. 
\begin{assumption}
	\label{assump:nabla2F_LC}
	$\nabla^2 F:\mathbb{R}^d\rightarrow \mathbb{R}^{d\times d}$ is $\Lhessian$-Lipschitz continuous on $\mathcal{K}$, i.e., 
	$
	\norm{\nabla^2 F(\bx)-\nabla^2F(\by)}\leq \Lhessian\norm{\bx-\by}$, for some $\Lhessian>0$ and all $\bx,\by\in\mathcal{K}$.\vspace{-0.1cm}
\end{assumption}

 \textbf{Statistical accuracy:} As anticipated in Sec.~\ref{sec:intro}, we are interested in computing estimates of the population  minimizer (\ref{eq:Population}) up to the statistical error using the ERM rule (\ref{eq:suboptimality-rule}). To do so, throughout  the paper, we  postulate the following standard  uniform convergence property, which suffices for learnability  by 
 \eqref{eq:suboptimality-rule}:
 there exists a constant $V_N$, dependent on $N=m\,n$, such that \vspace{-0.1cm}  \begin{equation}\label{eq:uniform-convergence}
     \sup_{x\in \mathcal{K}} \left | F(x)-F_P(x)\right|\leq V_N\quad \text{w.h.p.}\vspace{-0.1cm}
 \end{equation}  
The statistical accuracy  $V_N$ has been widely studied in the literature, e.g.,  \cite{Vapnik13,Bousquet02,Bartlett06,Frostig15,SHai-Shalev-book}. Consistently with these works, we
  will  assume:\vspace{-0.2cm}
  \begin{itemize}
      \item[1.]  $V_N=\mathcal{O}(1/N)$, for $\mu$-strongly convex  $F$ and $F_P$, with $0<\mu=\mathcal{O}(1)$;\vspace{-0.1cm}
      \item[2.] $V_N=\mathcal{O}(1/\sqrt{N})$ for convex   or $\mu$-strongly convex $F$, with $\mu=\mathcal{O}(1/\sqrt{N})$.\vspace{-0.2cm}
  \end{itemize}
 These  cases cover a variety of problems of practical interest. An example of  case 1 is a loss in the form $\ell(x;z)=f(x;z)+(\mu/2)\|x\|^2$, with fixed regularization parameter and $f$ convex in $x$ (uniformly on $z$),   as in ERM of linear predictors for supervised learning  \cite{sridharan2008fast}.  Case 2 captures traditional low-dimensional ($n>d$) ERM  with convex losses or regularized losses as above with optimal  regularization parameter $\mu=\mathcal{O}(1/\sqrt{N})$ \cite{Shwartz_et_al,Bartlett06,Frostig15}.


Under \eqref{eq:uniform-convergence}, the suboptimality gap at given $x\in \mathcal{K}$ reads:\footnote{\small We point out  that our results hold under (\ref{eq:suboptimality-rule2}), which can also be established using weaker conditions than \eqref{eq:uniform-convergence},
 e.g., invoking stability arguments \cite{Shwartz-JMLR2010}. }  \vspace{-0.1cm}
\begin{equation}\label{eq:suboptimality-rule2}
 	F_P(x)-F_P(x^\star) 
 \leq   \mathcal{O}(V_N) +  \big\{F(x)-F(\widehat{x})\big\},\quad \text{w.h.p.}\vspace{-0.1cm}
\end{equation}  Therefore, our ultimate goal will be 
computing $\varepsilon$-solutions $x_\varepsilon$ of \eqref{eq:P} of the order  $\varepsilon=\mathcal{O}(V_N)$. 

 \textbf{Statistical similarity:} We are interested in studying problem (P) under statistical similarity of $f_i$'s.

\begin{assumption}[$\beta$-related $f_i$'s]\label{assump:homogeneity} The local functions  $f_i$'s   are  $\beta$-related:    
	$\left\|\nabla^2 F(\bx)-\nabla^2f_i(\bx)\right\|_2\leq \beta$, for all $\bx\in\mathcal{K}$ and some $\beta\geq 0$.\vspace{-0.2cm}
\end{assumption}

The interesting case is when $1+\beta/\mu \ll \kappa\triangleq  {\Lgrad}/{\mu}$, where $\Lgrad$ is the Lipschitz constant of $\nabla F$ on $\mathcal{K}$ (uniformly on $\mathcal{Z}$). Under standard assumptions on data distributions and learning model underlying the ERM-see, e.g., \cite{DISCO,pmlr-v119-hendrikx20a}--$\beta$ is of the order $\beta=\mathcal{O}\left(1/\sqrt{n}\right)$, with high probability. In our analysis, when we target convergence to the statistical error,   
we will tacitly assume such dependence of $\beta$ on the local sample size.  Note that our bounds hold for general situations when Assumption \ref{assump:homogeneity} may hold due to some other reason besides statistical arguments. \vspace{-0.4cm}  
  
\subsection{Network setting}\vspace{-0.1cm} 
The network of agents is modeled as a fixed, undirected graph,   $\GG \triangleq (\VV, \EE)$, where $\VV \triangleq \{1,\ldots, m\}$ denotes  the vertex    set--the set of agents--while $\EE \triangleq \{(i,j) \, |\, i,j \in \VV \}$ represents the set of edges--the communication links;  $(i,j) \in \EE$ iff there exists a communication link between agent $i$ and $j$. 
The following is a standard assumption on the   connectivity.  
\begin{assumption}[On the network]\label{assump:network}
	The graph $\GG$ is connected. 
\end{assumption}

  
  

  

\section{Algorithmic Design: DiRegINA} 
We aim at decentralizing the   cubic regularization of the Newton method \cite{Nesterov--Cubic06} over undirected graphs.   The main challenge in developing such an algorithm is to track and adapt a faithful estimates of the global gradient and Hessian matrix of $F$ at each agent, without incurring in an unaffordable communication overhead while still guaranteeing convergence at fast rates. Our idea is to estimate locally  the gradient $\nabla F$ via gradient-tracking \cite{Xu-TAC:hs,NEXT16} while the Hessian $\nabla^2 F$  is replaced by the local subsampled estimates $\nabla^2 f_i$ (statistical preconditioning). The algorithm, termed \alg  {(\underline{Di}stributed \underline{Reg}ularized \underline{I}nexact \underline{N}ewton \underline{A}lgorithm)},  is formally  introduced in Algorithm \ref{alg:DiRegINA}, and commented next.

Each agent maintains and updates iteratively a local copy $x_i\in \mathbb{R}^d$ of the global optimization variable $x$ along with the auxiliary variable $s_i\in \mathbb{R}^d$, which estimates the gradient of the global objective $F$;   $x_i^\nu$ (resp. $s_i^\nu$) denotes the value of $x_i$ (resp. $s_i$) at iteration $\nu\geq 0$.  (\texttt{S.1}) is the optimization step wherein 
every agent $i$, given $x_i^\nu$ and   $s_i^\nu$, minimizes     an inexact local second-order approximation of $F$, as defined in (\ref{def:x_nuplus}).
In this surrogate function, i) $\gradtrack_i^\nu$ acts as an approximation of $\nabla F$ at $x_i^\nu$, that is, $s_i^\nu \approx   \nabla F(x_i^\nu)$; ii)  in the quadratic term, $\nabla^2 f_i(x_i^\nu)$     plays the role of $\nabla^2 F(x_i^\nu)$ (due to statistical similarity, cf. Assumption \ref{assump:homogeneity}) with   $\tau_i I$  ensuring strong convexity of the objective; and iii) the last term is the cubic regularization as in the centralized  method \cite{Nesterov--Cubic06}. In  (\texttt{S.2}), based upon exchange of the two vectors  $x_i^{\nu +}$ and $s_i^{\nu}$ with their immediate neighbors, each agent updates the estimate  $x_i^{\nu}\to x_i^{\nu+1}$   via the consensus step (\ref{DiRegINA_pseudocode}) and   $s_i^{\nu}\to s_i^{\nu+1}$ via the perturbed consensus (\ref{grad_track_update}), which in fact  tracks $\nabla F(x_i^\nu)$ \cite{Xu-TAC:hs,NEXT16}. 
 The weights $(W_K)_{i,j=1}^m$  in (\ref{DiRegINA_pseudocode})-(\ref{grad_track_update}) are free design quantities and subject to   the following  conditions, where $\mathcal{P}_K$  denotes the set of polynomials with degree less than or equal than $K=1,2,\ldots $.    
\begin{assumption}[On the weight matrix ${W_K}$]\label{assump:weight}    The matrix    ${W_K}=P^K(\overline{{W}})$, where $P_K\in \mathcal{P}_K$ with $P_K(1)=1$, and $\overline{{W}}\triangleq \big(\bar{w}_{ij}\big)_{i,j=1}^m$  is a reference matrix   satisfying the following conditions: 

\noindent\textbf{(a)} $\overline{W}$ has a sparsity pattern compliant with $\mathcal{G}$, that is \vspace{-0.2cm}
	\begin{enumerate}[label=\roman*)]
		\item $\bar{w}_{ii} > 0$, for all $i = 1, \ldots, m$;\vspace{-0.1cm}
		\item $\bar{w}_{ij} > 0$, if $(i,j) \in \mathcal{E}$; and $\bar{w}_{ij}=0$ otherwise.\vspace{-0.2cm}
	\end{enumerate}
	
\noindent\textbf{(b)}  	 $\overline{{W}}$ is doubly stochastic, i.e., ${1}^\top \overline{W} = {1}^\top$ and $\overline{W}  {1} = 1$.  

Let $\rho_K\triangleq \lambda_{\max}(W_K-11^\top/m)$ [$\lambda_{\max}(\bullet)$ denotes the largest eigenvalue of the matrix argument]
\end{assumption}




When $K=1$, $W_K=\overline{W}$, that is, a single round of communication per iteration is performed. Several rules have been proposed in the literature for $\overline{W}$ to be compliant with Assumption \ref{assump:weight}, such as  the Laplacian,  the Metropolis-Hasting, and the maximum-degree weights rules; see, e.g., \cite{Nedich_tutorial}  and references therein.  When $K>1$,   $K$  rounds  of communications per iteration $\nu$ are employed.   
For instance, this can be performed using the same   reference matrix $\overline{W}$ (satisfying Assumption~\ref{assump:weight}) in each communication exchange, resulting in     $W_K=\overline{W}^K$ and    $\rho_K={\rho}^K$, with $\rho=\lambda_{\max}(\overline{W}-11^\top/m)<1$.
Faster information  mixing   can be  obtained using suitably designed polynomials $P_K(\overline{W})$, such as Chebyshev \cite{auzinger2011iterative,scaman2017optimal} or orthogonal (a.k.a. Jacobi) \cite{Berthier2020} polynomials (notice that $P_K(1)=1$  is to ensure the doubly stochasticity of $W_K$ when $\overline{W}$ is doubly stochastic).

Although the minimization  \eqref{def:x_nuplus} may look challenging, it is showed in \cite{Nesterov--Cubic06} that its computational complexity is of the same order as of finding the standard Newton step. Importantly, in our algorithm, these are local steps made without any communications between the nodes.

\begin{algorithm}[t]	
	\caption{ \alg}\label{alg:DiRegINA}
	\textbf{Data}: $\bx^{0}_{i}\in \mathcal{K}$ and   $\gradtrack_i^0=\nabla f_i(\bx_i^0)$, $\tau_i>0$, $M_i>0$,  $\forall  i$. 
	
	\textbf{Iterate}: $\nu=1,2,...$\vspace{0.1cm}
	
	\begin{subequations}
		\texttt{[S.1] [Local Optimization]} Each agent $i$ computes $\bx_i^{\nu+}$: 		\begin{equation}
		\label{def:x_nuplus}
		\begin{aligned}
		&\bx_i^{\nu+}=\argmin_{\by\in\mathcal{K}}\,F(\bx_i^\nu)+\left\langle \gradtrack_i^\nu,\by-\bx_i^\nu\right\rangle
		\\
		&+\frac{1}{2}\left\langle \left[\nabla^2 f_i(\bx_i^\nu)+\tau_i\bI\right] \left(\by-\bx_i^\nu\right),\by-\bx_i^\nu\right\rangle+\frac{M_i}{6}\norm{\by-\bx_i^\nu}^3.
		\end{aligned}
		\end{equation}
		\texttt{[S.2] [Local Communication]} Each agent $i$ updates its local variables according to \vspace{-0.1cm}
        \begin{align}
		\bx_i^{\nu+1}= & \sum_{j=1}^m (W_K)_{i,j}\, \bx_j^{\nu+},		\label{DiRegINA_pseudocode}\\
		\gradtrack_i^{\nu+1}= & \sum_{j=1}^m (W_K)_{i,j}\, \left(	\gradtrack_j^{\nu} + \nabla f_j(\bx_j^{\nu+1})-\nabla f_j(\bx_j^{\nu})\right).\label{grad_track_update}\vspace{-0.1cm}
		\end{align}
		\textbf{end} 
	\end{subequations}
\end{algorithm}

\textbf{On the initialization:} We will study convergence of Algorithm \ref{alg:DiRegINA} under two sets of initialization for the $x$-variables, namely: i) random initialization and ii) statistically informed initialization. The latter is given by \vspace{-0.1cm}
\begin{equation}\label{eq:initialization}
\bx_i^{0}=\sum_{j=1}^m {(W_K)_{i,j}}\bx^{-1}_j,\quad\text{with}\quad  \bx^{-1}_i=\argmin_{x\in\mathcal{K}}f_i(x).\vspace{-0.1cm}
\end{equation} 
 This corresponds to a preliminary round of consensus on the local solutions $\bx^{-1}_i$. This second strategy takes advantage of the statistical similarity of $f_i$'s to guarantee, under \eqref{eq:uniform-convergence},  an initial optimality  gap of the order of:  
$p^0\triangleq \frac{1}{m}\sum_{i=1}^m\left(F(\bx_i^0)-F(\widehat{\bx})\right)=\mathcal{O}(1/\sqrt{n})$. If we further assume $\mu_i>0$, for all $i$, one can show that $p^0=\mathcal{O}(1/{n})$. 
This  will be shown to significantly   improve  the convergence rate of the algorithm, at a negligible extra communication cost  (but local computations). \vspace{-0.1cm}



\section{Convergence Analysis}\label{sec:main_results}\vspace{-0.1cm}
  
In this section, we study convergence of  \alg applied to  convex (cf. Sec.~\ref{sec:cvx_case}) and   strongly convex ERM  \eqref{eq:P}, the latter with either $\beta< \mu$ (cf. Sec.~\ref{sec:scvx_case}) or   $\beta\geq \mu>0$ (cf. Sec.~\ref{sec:scvx_case_regularized}). Our complexity results are of two type: i) classical rate bounds targeting any arbitrary ERM suboptimality  $\varepsilon>0$; and ii) convergence rates  to   $V_N$-solutions of \eqref{eq:P} (statistical error). 
Our complexity bounds are established in terms of the suboptimality gap:  \vspace{-0.1cm}
\begin{equation}\label{p_nu}
p^\nu\triangleq \frac{1}{m}\sum_{i=1}^m\left(F(\bx_i^\nu)-F(\widehat{\bx})\right), 
\end{equation}
where $\{\bx_i^\nu\}_{i=1}^m$  is the iterate  generated by  \alg at iteration $\nu$ (iterations are counted as  number of optimization steps \texttt{(S.1)}). 
Similarly to the centralized case \cite{Nesterov--Cubic06}, our bounds also depend on the following distance of initial points $\bx_i^0$, $i=1,\ldots, m$, from a given optimum $\widehat{x}$ of \eqref{eq:P}\begin{equation*}\label{def:D}
D\triangleq \!\!\!\max_{
\begin{subarray}{l }
	\bx_i\in\mathcal{K},
		\forall i
	\end{subarray}
}\left\{\max_{i=1\ldots, m}||\bx_i-\widehat{\bx}||:\sum_{i=1}^mF(\bx_i)\leq \sum_{i=1}^mF(\bx_i^0)\right\}.\vspace{-0.1cm}
\end{equation*}
Note that $D<\infty$ (cf. Assumption~\ref{convex-case}).

For the sake of   simplicity, in the   rate bounds we hide universal constants and log factors independent on $\varepsilon$ via $\widetilde{\mathcal{O}}$-notation; the exact expressions can be found in the supplementary material along with a detailed characterization of all the rate regions travelled by the algorithm. \vspace{-0.1cm}
  
\subsection{ Convex ERM \eqref{eq:P}}\label{sec:cvx_case} Our first result pertains to convex $F$ (and $F_P$). 
\begin{theorem}\label{thm:cvx_case}
Consider the ERM \eqref{eq:P} under Assumptions~\ref{convex-case}, \ref{assump:nabla2F_LC}, and \ref{assump:homogeneity} over a graph $\mathcal{G}$ satisfying Assumption \ref{assump:network};  and let $\{x^{\nu}_i\}_{i=1}^m$ be the sequence generated by \alg  
 under the following  tuning:   $M_i= \Lhessian>0$ and $\tau_i=2\beta$, for all $i=1,\ldots,m$;   $W_K=P_K(\overline{W})$ (and $P_K(1)=1$), where $\overline{W}$ is a given matrix satisfying Assumption \ref{assump:weight} with $\rho=\lambda_{\max}(\overline{W}-11^\top/m)$, and $K=\widetilde{\mathcal{O}}(\log(1/\varepsilon)/\sqrt{1-\rho})$, with   $\varepsilon>0$ being the target  accuracy. Then,    the total number of communications for \alg to make   $p^\nu\leq \varepsilon$ reads
	\begin{equation}\label{cvx_case_rate}
\begin{aligned}
\widetilde{\mathcal{O}}\bigg(\frac{1}{\sqrt{1-\rho}} \cdot  \left\{ \sqrt{\frac{\Lhessian D^3}{\varepsilon^{1+\alpha}}}+\frac{\Lhessian D^3\beta}{\varepsilon^{1+\alpha/2}}\right\}\bigg),
\end{aligned}
	\end{equation}
where $\alpha>0$ is arbitrarily small. In particular, if the   $\mathcal{G}$ is  a star or fully-connected,  $\rho=0$ and $\alpha=0$.
\end{theorem}
\vspace{-0.4cm}
\begin{proof}
	See Appendix \ref{sec:cvx_case_proof} in the supplementary material.
\end{proof}  

\vspace{-1em}
The rate expression (\ref{cvx_case_rate})  has an interesting interpretation. The multiplicative factor  $1/\sqrt{1-\rho}>1$   accounts for the rounds of  communications   per iteration (optimization steps) while the other two terms quantify the overall number of iterations   to reach the desired accuracy $\varepsilon$. Note that the first of these two terms, ${\mathcal{O}}(\sqrt{LD^3/\varepsilon^{1+\alpha}})$, is ``almost'' identical to the rate of the centralized Newton method  (with a slight difference  definition of $D$; see \cite{Nesterov--Cubic06}) while the other one, ${\mathcal{O}}((LD^3\beta)/\varepsilon^{1+\alpha/2})$, is a byproduct of the  discrepancy between local and global Hessian matrices. 
This shows a two-speed behavior of the algorithm, depending on the target accuracy $\varepsilon>0$: 1) as long as  $\varepsilon=\Omega(\Lhessian D^3\beta^2)$, ${\mathcal{O}}((LD^3\beta^2)/\varepsilon)$ can be neglected and the algorithm exhibits  almost centralized fast convergence (up to the network effect),  ${\mathcal{O}}(\frac{1}{\sqrt{1-\rho}}\sqrt{{LD^3}/{\varepsilon^{1+\alpha}}})$; 2) on the other hand, for   smaller (order of) $\varepsilon$, the rate is determined by the worst-term ${\mathcal{O}}(\frac{1}{\sqrt{1-\rho}}(LD^3\beta^2)/\varepsilon)$.

The interesting observation is that, in the setting above  and under \eqref{eq:uniform-convergence}, (\ref{eq:suboptimality-rule2}) holds with  $V_N=\mathcal{{\mathcal{O}}}(1/\sqrt{N})$ and $\beta=\mathcal{{\mathcal{O}}}(1/\sqrt{n})$. Hence,   $\varepsilon=\Omega(\Lhessian D^3\beta^2)$ is of the order of the statistical error $V_N$, as long as $m\leq n$, which is a reasonable condition.  This together with Theorem \ref{thm:cvx_case} implies that   fast rates (of the order of centralized ones)  can be certified up to the statistical precision, as formalized next. 

\begin{corollary}[$V_N$-solution]\label{corr:cvx_case}
Instate the setting of Theorem \ref{thm:cvx_case}, and  let $V_N=\mathcal{O}(1/\sqrt{N})$,  $\beta=\mathcal{O}(1/\sqrt{n})$, and $m\leq n$.  Then \alg returns a $V_N$-solution of  \eqref{eq:P}  in   	
	\begin{equation}\label{cvx_case_iid_rate}
	\widetilde{\mathcal{O}}\left({\frac{1}{\sqrt{1-\rho}}\cdot \sqrt{\frac{\Lhessian D^3}{V_N^{1+\alpha}}}}\right)\vspace{-0.2cm} 
	\end{equation}
	communications.
\end{corollary}
 

\subsection{Strongly-convex ERM \eqref{eq:P} with $\beta<\mu$}\label{sec:scvx_case}
We consider now the case of $F$ $\mu$-strongly convex and  $\beta<\mu$. 
The complementary case $\beta\geq \mu$ is studied in Sec.~\ref{sec:scvx_case_regularized}. 

\begin{theorem}\label{thm:scvx_case_beta_leq_mu}
	Instate the setting of  Theorem~\ref{thm:cvx_case} with Assumption \ref{convex-case} replaced by Assumption \ref{sconvex-case} and $K=\widetilde{\mathcal{O}}(1/\sqrt{1-\rho})$; and further assume   $\beta<\mu$. 
	 Then,    the total number of communications for \alg to make   $p^\nu\leq \varepsilon$ reads 
	\begin{equation}\label{scvx_case_rate}
	\hspace{-0.2cm}\begin{aligned}\small 
	&\widetilde{\mathcal{O}}\Bigg(\frac{1}{\sqrt{1-\rho}}\Bigg\{m^{\frac{1}{4}} \sqrt{\frac{\Lhessian D}{\mu}}
+\log\log\left[\frac{\mu^2}{\beta^2}\cdot \min\Big(1, \frac{\beta^2\mu}{m\Lhessian^2}\cdot \frac{1}{\varepsilon}\Big)\right]
	\\
	&\qquad\qquad\qquad\qquad +\frac{\beta}{\mu}\log\left[\max\Big(1,\frac{\beta^2\mu}{m\Lhessian^2} \cdot \frac{1}{\varepsilon}\Big)\right]\Bigg\}\Bigg).
	\end{aligned}
	\end{equation}	
\end{theorem}
\vspace{-0.4cm}
\begin{proof}
	See Appendix~\ref{sec:scvx_case_proof} in the supplementary material. 
\end{proof}   

 \alg  exhibits a different rate behavior, depending on the value of  $\epsilon$. We notice   three ``regions'': 1) a first phase of the order of   $\widetilde{\mathcal{O}}\big({m^{1/4}\sqrt{\Lhessian D/\mu}}\big)$ number of iterations; 
 2) the second region is of quadratic convergence, with rate of the order of $\log\log(1/\varepsilon)$;  and finally 3)   the  region of linear convergence with rate $\widetilde{\mathcal{O}}\left(\beta/\mu\log(1/\varepsilon)\right)$. This last region is not present in the rate of the centralized cubic regularization of the Newton method and is  due to the Hessians discrepancy.  Clearly, for arbitrarily small $\varepsilon>0$,  (\ref{scvx_case_rate}) is dominated by  the last term, resulting in a linear convergence. This linear rate is     
 slightly worse than that of SONATA  \cite{sun2019distributed} in  sight of first two terms in \eqref{scvx_case_rate}. This is    because  \alg is an inexact (and thus more computationally efficient) method than  \cite{sun2019distributed}.   We remark that more favorable complexity estimates   can be obtained when    $L=0$ (i.e., $f_i$'s are quadratic)--we refer the  reader to the supplementary material for details. 
 
 The algorithm does not enter in the last region if    $\varepsilon=\Omega(\beta^2\mu/(m\Lhessian^2))$.     This means that faster rate can be guaranteed up to $V_N$-solutions, 
 as stated next.

\begin{corollary}[$V_N$-solution] \label{corr:scvx_case_beta_leq_mu}
Instate the setting of Theorem \ref{thm:scvx_case_beta_leq_mu}, and  let $V_N=\mathcal{O}(1/{N})$,  $\beta=\mathcal{O}(1/\sqrt{n})$, $\mu=\mathcal{O}(1)$, and $m\leq n$.   \alg returns a $V_N$-solution of  \eqref{eq:P}  in  
 \begin{equation}\label{scvx_case_iid_rate}
\widetilde{\mathcal{O}}\bigg(\frac{1}{\sqrt{1-\rho}}\Bigg\{m^{1/4}\sqrt{\frac{\Lhessian D}{\mu}}
+\log\log\left(\frac{\mu^3}{m\Lhessian^2 V_N}\right)\Bigg\}\bigg)\vspace{-0.1cm} 
\end{equation}
communications.
\end{corollary}

When the problem is ill-conditioned (i.e. $\mu\ll 1$) the first term $m^{1/4} \sqrt{\Lhessian D/\mu}$ may dominate the $\log\log$ term in \eqref{scvx_case_iid_rate}, unless $n$  is extremely large (and thus $V_N$ very small). This term is unavoidable--it is present also in the centralized instances of Newton-type methods--unless more refined function classes are considered, such as (generalized) self-concordant \cite{Bach10,nesterov2018lectures,Sun19-Self-concordance}. In the supplementary material, we present results for quadratic losses (cf. Appendix \ref{sec:scvx_case_beta_leq_mu_Qcase_proof}). 
Here, we take another direction and show that the initialization strategy (\ref{eq:initialization}) 
is enough to get rid of the first phase.    
\begin{corollary}[$V_N$-solution + initialization]\label{corr:scvx_case_beta_leq_mu_initialization_iid}
	Instate the setting of Theorem \ref{corr:scvx_case_beta_leq_mu} and further assume: $\mu_i=\Omega{(1)}$, for all $i=1,\ldots m$, and   {$n=\Omega(\Lhessian^2/\mu^3\cdot m)$}.  \alg, initialized with (\ref{eq:initialization}), returns a $V_N$-solution of  \eqref{eq:P}  in    	
	\begin{equation}\label{scvx_case_rate_init_iid}
	\widetilde{\mathcal{O}}\left({\frac{1}{\sqrt{1-\rho}}  \left\{\log\log\left(\frac{\mu^3}{m\Lhessian^2}\cdot \frac{1}{V_N}\right)\right\}}\right)
	\end{equation}
	communications.\vspace{-0.3cm}
\end{corollary}
 
\begin{proof}
    See Appendix \ref{sec:scvx_case_beta_leq_mu_initialization_proof} in the supporting material. 
\end{proof} 
 
 \subsection{Strongly-convex ERM \eqref{eq:P} with $\beta\geq\mu$}\label{sec:scvx_case_regularized}
We now consider   the  complementary case  $\beta\geq \mu$. This is   a common setting when   $F_P$ is convex and a regularizer is used in the ERM (\ref{eq:P}), making  $F$ $\mu$-strongly convex;  typically,   $\mu=\mathcal{O}(1/\sqrt{N})$ while $\beta=\mathcal{O}(1/\sqrt{n})$. 

\begin{theorem}\label{thm:scvx_case_beta_geq_mu}
Instate the setting of Theorem \ref{thm:scvx_case_beta_leq_mu} with now  $\mu\leq \beta\leq 1$. Then,    the total number of communications for \alg to make   $p^\nu\leq \varepsilon$ reads 
\begin{equation}\label{reg_scvx_rate} 
\hspace{-0.2cm}\widetilde{\mathcal{O}}\left({\frac{1}{\sqrt{1-\rho}}\left\{\sqrt{\frac{\Lhessian D}{\mu}}\left(1+m^{\frac{1}{4}}\sqrt{\frac{\beta}{\mu}}\right)+\frac{\beta}{\mu}\log\Big(\frac{\beta^2\mu}{m\Lhessian^2} \frac{1}{\varepsilon}\Big)\right\}}\right).
\end{equation}	
\end{theorem}
\vspace{-0.3cm}
\begin{proof}
	See Appendix~\ref{subsec:scvx_beta_geq_mu} in the supplementary material. 
\end{proof}

For arbitrary small $\varepsilon>0$, the rate (\ref{reg_scvx_rate}) is dominated by the linear term. When we target $V_N$-solutions, in this setting $V_N=\mathcal{O}(1/\sqrt{N})$,   $\mu=\mathcal{O}(V_N)$ (as for the regularized ERM setting), and $\beta=\mathcal{O}(1/\sqrt{n})$, \eqref{reg_scvx_rate} becomes \vspace{-0.2cm}
\begin{equation}
\widetilde{\mathcal{O}}\left({\frac{1}{\sqrt{1-\rho}}\cdot   m^{1/2}\cdot \sqrt{\frac{\Lhessian D}{V_N}}}\right).\vspace{-0.1cm}
\end{equation}
Note that this rate is of the same order of the one achieved in  the convex setting (with no regularization)--see Corollary~\ref{corr:cvx_case}. If the functions $f_i$ are quadratic, the rate, as expected, improves and reads (see supporting material, Appendix~\ref{subsec:scvx_beta_geq_mu_Qcase})   \vspace{-0.1cm}
$$
\widetilde{\mathcal{O}}\left(\frac{1}{\sqrt{1-\rho}}\cdot   m^{1/2}\cdot \log\left(\frac{1}{{V_N}}\right)\right).
$$
Note that, on star networks ($\rho=0$), this rate improves on  that of DANE \cite{DANE}.

\begin{figure*}\centering
	\subfigure[]{
		\includegraphics[width=0.255\textwidth]{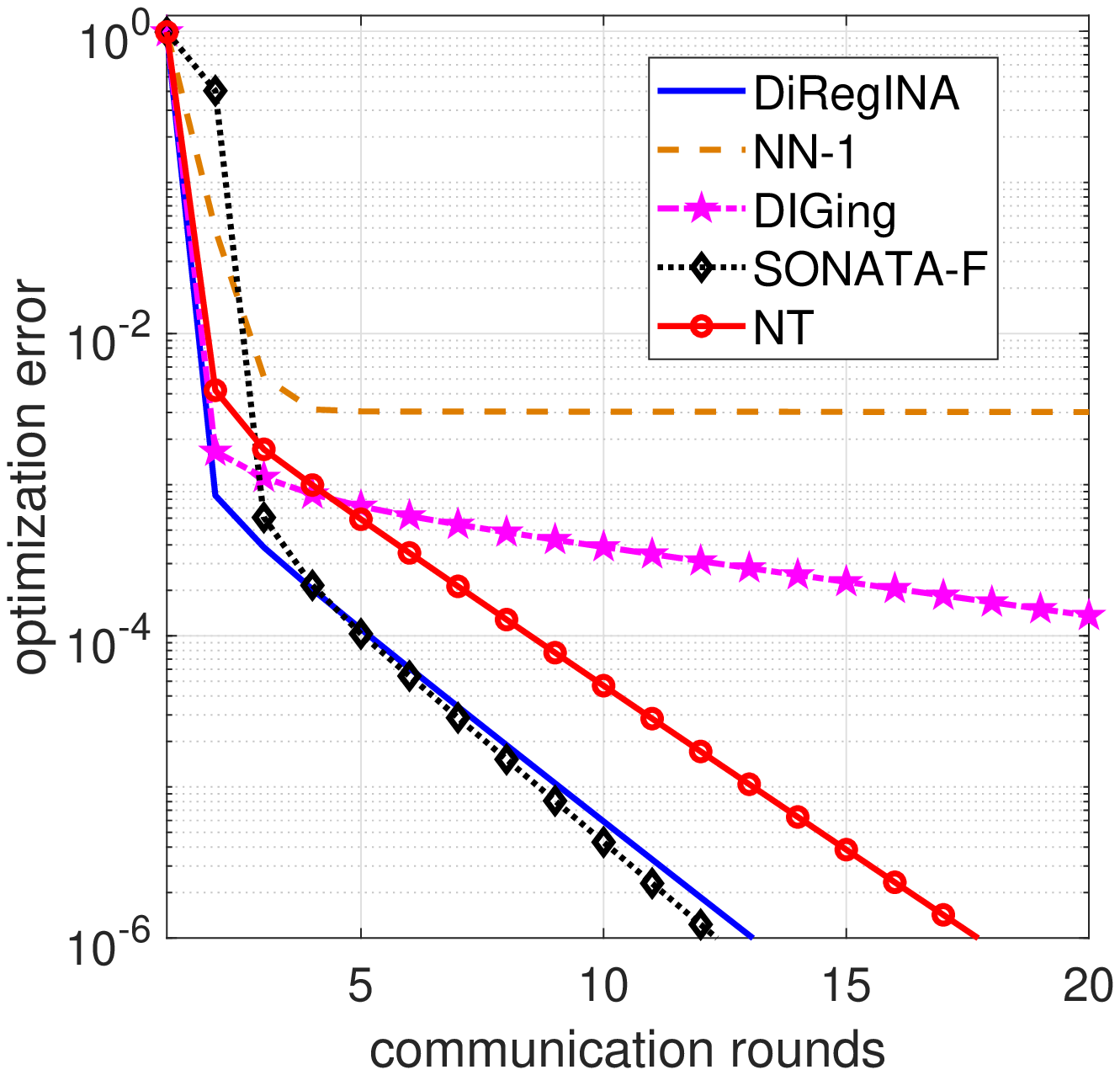}\hspace{-0.4cm}
		\label{RidgeReg_mg1}
	}
	\subfigure[]{
		\includegraphics[width=0.255\textwidth]{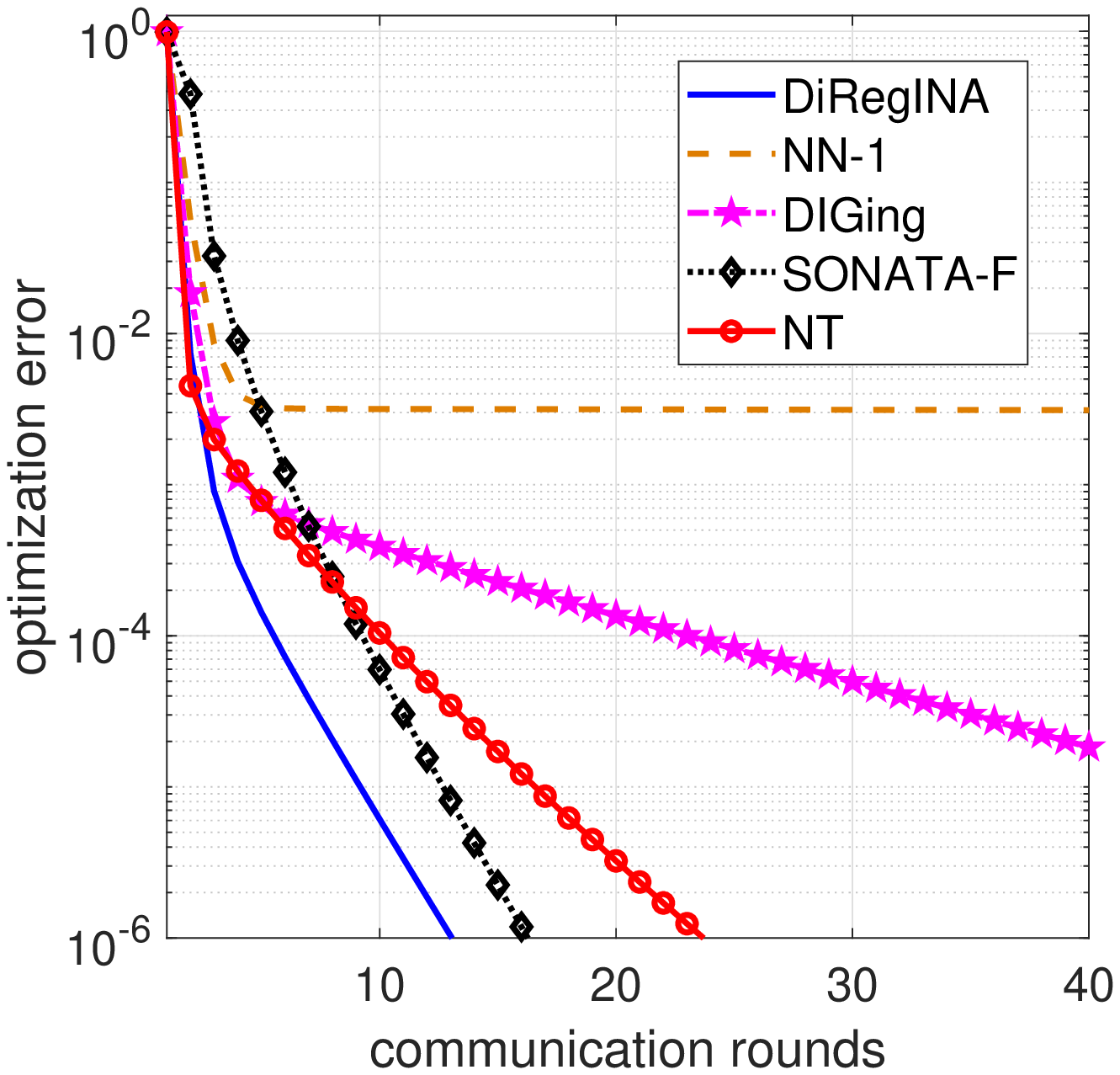}\hspace{-0.4cm}
		\label{RidgeReg_mg2}
	}
	\subfigure[]{
		\includegraphics[width=0.255\textwidth]{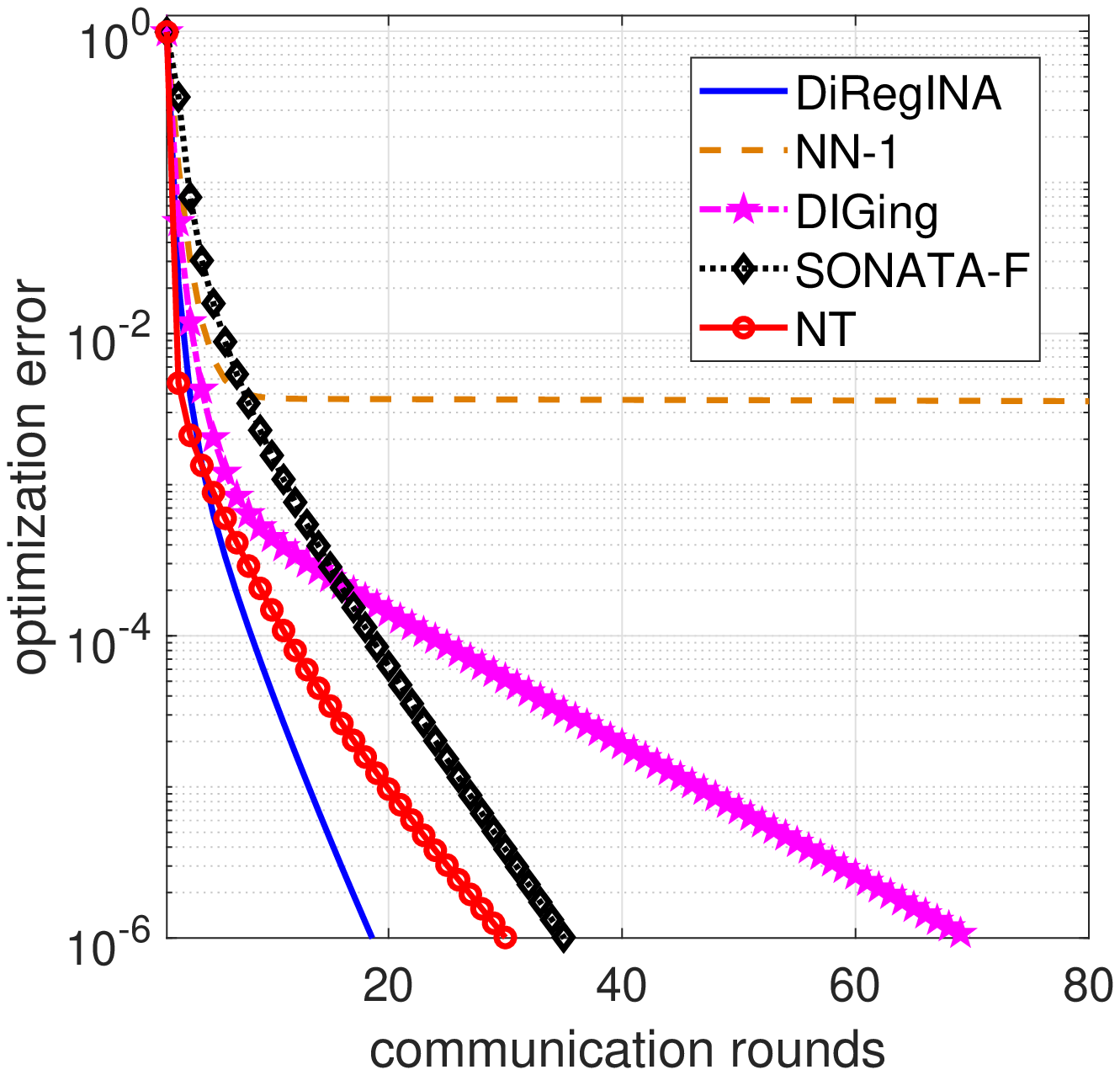}\hspace{-0.4cm}
		\label{RidgeReg_mg3}
	}
	\subfigure[]{
		\includegraphics[width=0.255\textwidth]{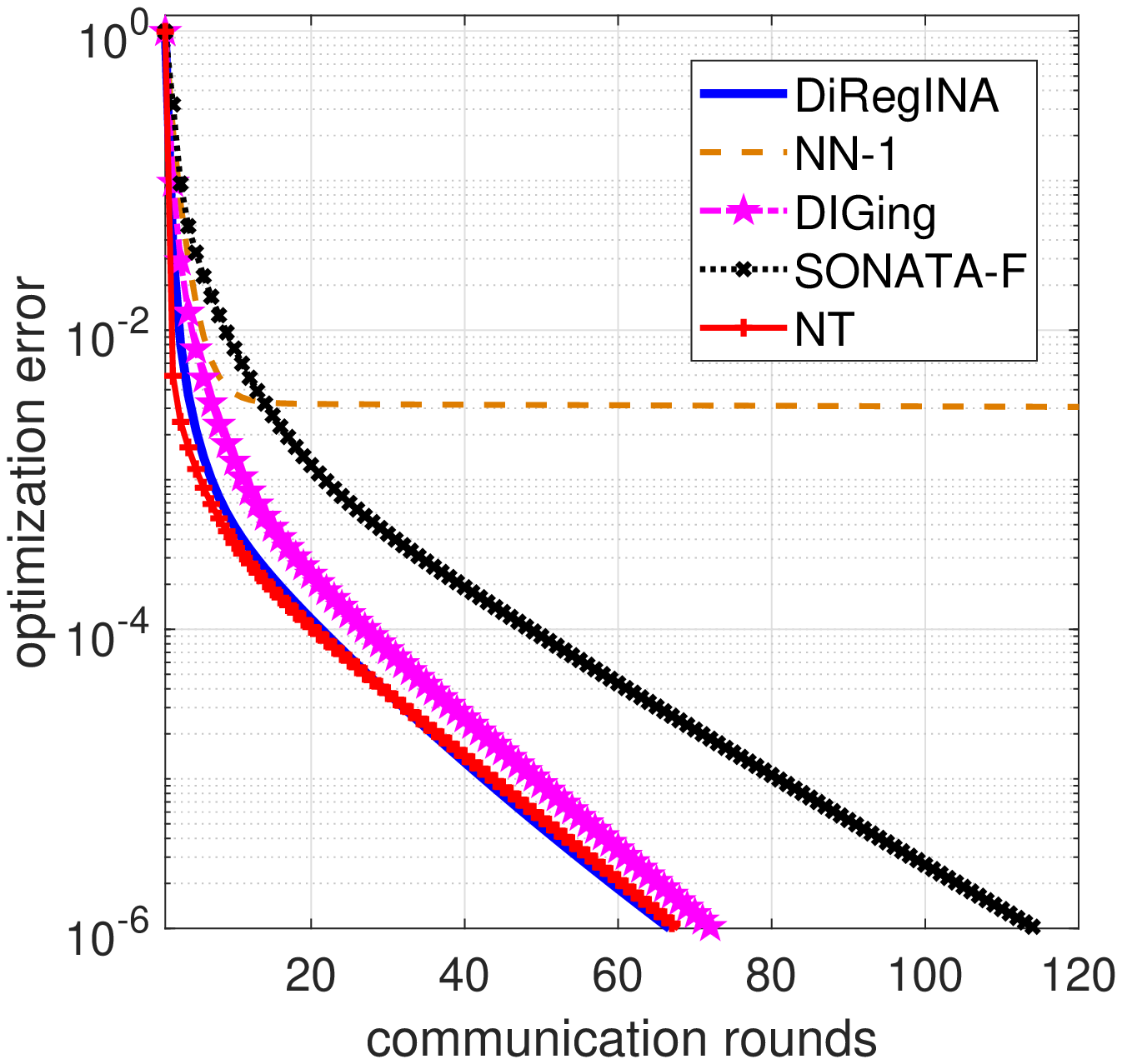}\hspace{-0.4cm}
		\label{RidgeReg_mg4} 
	}\vspace{-0.2cm}
	\caption{Distributed ridge regression:   (a) star-topology; and  Erd\H{o}s-R\'{e}nyi graph with   (b) $\rho=0.20$, (c) $\rho=0.41$, (d)  $\rho=0.69$.}
	\label{fig:RidgReg_mg}
\end{figure*}

\section{Experiments}\vspace{-0.1cm}\label{sec:numerical_experiments}
In this section we test numerically our theoretical findings on two classes of problems over meshed networks:  1)  ridge regression  and    2)   logistic regression. Other experiments can be found in the supplementary material (cf.~ Sec.~\ref{sec:numerical_experiments_appendix}).

The network  graph is generated using an Erd\H{o}s-R\'{e}nyi model $G(m,p)$, with $m=30$ nodes and different values of $p$ to span different level of  connectivity. 

We compare \alg with the following  methods:    
\\$\bullet$ {\it  Distributed (first-order) method with gradient tracking:}    we consider   SONATA \cite{sun2019distributed}   and 
DIGing \cite{nedic2017achieving}; both build on the idea of gradient tracking, with the former applicable also to constrained problems. For the SONATA algorithm, we will simulate two instances, namely: SONATA-L (L stands for linearization) and SONATA-F (F stands for full); the former uses only first-order information in the agents' local updates (as DGing) while the latter exploits functions' similarity by employing local mirror-descent-based optimization. 
\\
 $\bullet$ \emph{Distributed accelerated first-order methods:} we   consider   APAPC \cite{kovalev2020optimal} and   SSDA \cite{scaman2017optimal}, which employ Nesterov acceleration on the local optimization steps--with the former using primal gradients while the latter requiring gradients of the  conjugate functions--and Chebyshev acceleration on the consensus steps. These schemes do not leverage any similarity among the local agents' functions.  
\\
$\bullet$ \emph{Distributed  second-order methods:} We implement  i) Network Newton-K (NN-K) \cite{mokhtari2016network} with $K=1$ so that  it has the same communication cost per iteration of \alg; ii)   SONATA-F \cite{sun2019distributed}, which is a mirror descent-type distributed scheme wherein  agents need to solve \emph{exactly} a strongly convex optimization problem; and iii)   
 Newton Tracking (NT) \cite{So2020},   which has been shown the outperform the majority of distributed second-order methods. 

 All the algorithms are coded in  MATLAB R2019a, running on a computer with Intel(R) Core(TM) i7-8650U CPU@1.90GHz, 16.0 GB of RAM, and 64-bit Windows 10.\vspace{-0.2cm}

\subsection{Distributed Ridge Regression}\label{Regression_simulations} 
We train  ridge regression, LIBSVM, scaled \texttt{mg} dataset \cite{flake2002efficient}, which is an instance of \eqref{eq:P} with $f_i(x)=(1/2n)\norm{A_i x-b_i}^2+\frac{\lambda}{2}\norm{x}^2$   and $\mathcal{K}=\mathbb{R}^d$, with  $d=6$. 
We set $\lambda=1/\sqrt{N}=0.0269$; we estimate  $\beta=0.1457$ and $\mu=0.0929$.  
The graph parameter $p=0.6, 0.33, 0.28$, resulting in the  connectivity values $\rho\approx 0.20, 0.41, 0.70$, respectively.   We compared   DiRegINA, NN-1, DIGing, SONATA-F and NT, all initialized from the same identical random  point.  {The coefficients of the matrix $\overline{W}$ are chosen according to the Metropolis–Hastings rule \cite{xiao2007distributed}}. The free parameters of the algorithm are tuned manually; specifically:     DiRegINA,    $\tau=2\beta$, $M=1e-3$, and $K=1$;  NN-1,   $\alpha=1e-3$ and $\epsilon=1$; 
DIGing, stepsize equal to $0.5$; SONATA-F, $\tau=0.27$;  NT,  $\epsilon=0.08$ and $\alpha=0.1$.  This tuning corresponds to the best practical performance we observed.

In Fig.~\ref{fig:RidgReg_mg}, we plot the function residual $p^\nu$ defined in \eqref{p_nu} versus the   communication rounds  in the four aforementioned network settings. \alg    demonstrates good performance over first-order methods, and compares favorably also with SONATA-F (which has higher computational cost).  Note the change of rate, as predicted by our theory, with linear rate in the last stage.
    NN-1 is not competitive while 
    NT in some settings is comparable with \alg, but we observed to be more sensitive to the tuning.  \vspace{-0.2cm}

\begin{figure}\centering
	\subfigure[]{
		\includegraphics[width=0.25\textwidth]{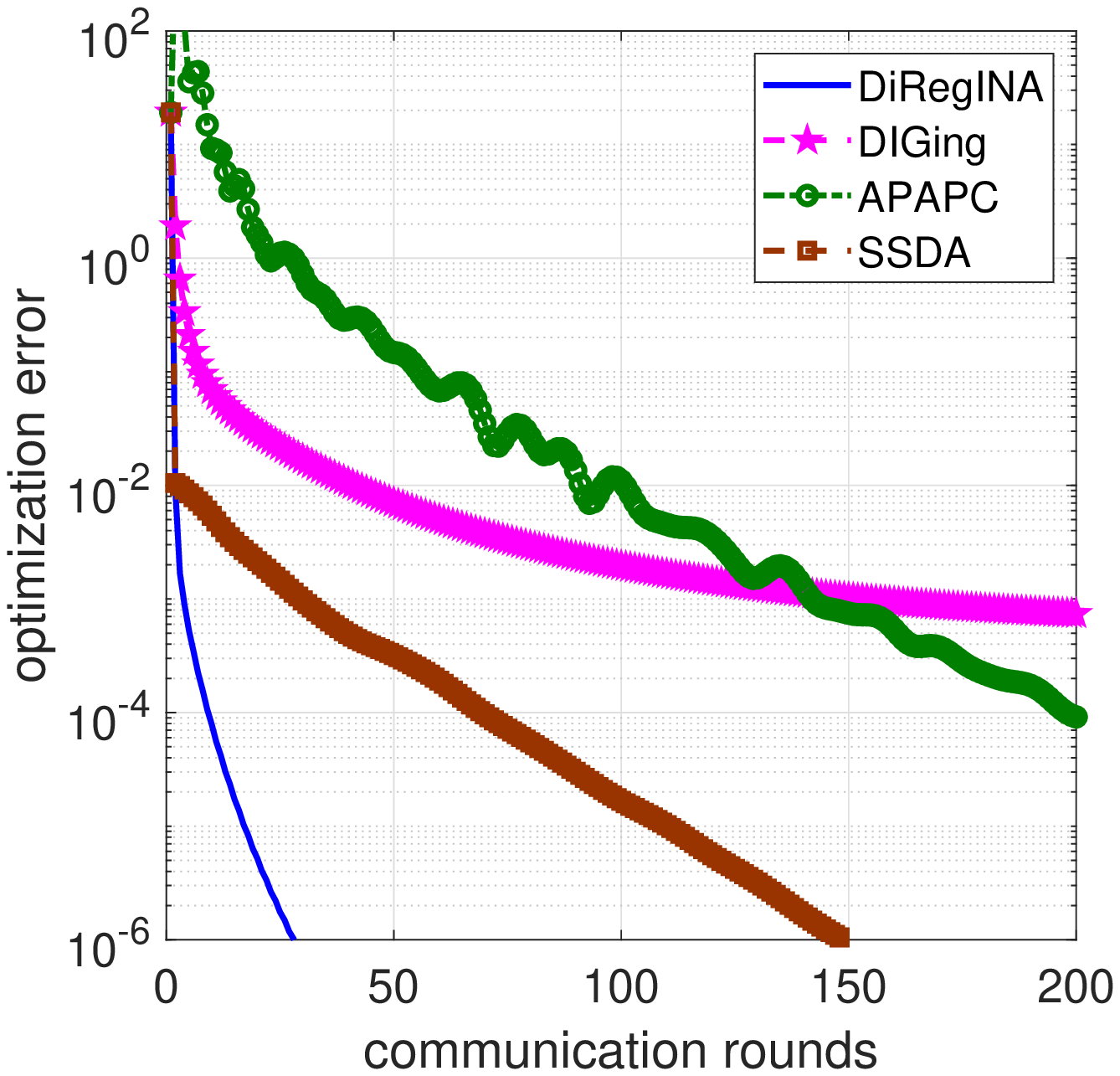}\hspace{-0.4cm}
		\label{RidgeReg_mg3_accelerated}
	}
	\subfigure[]{
		\includegraphics[width=0.25\textwidth]{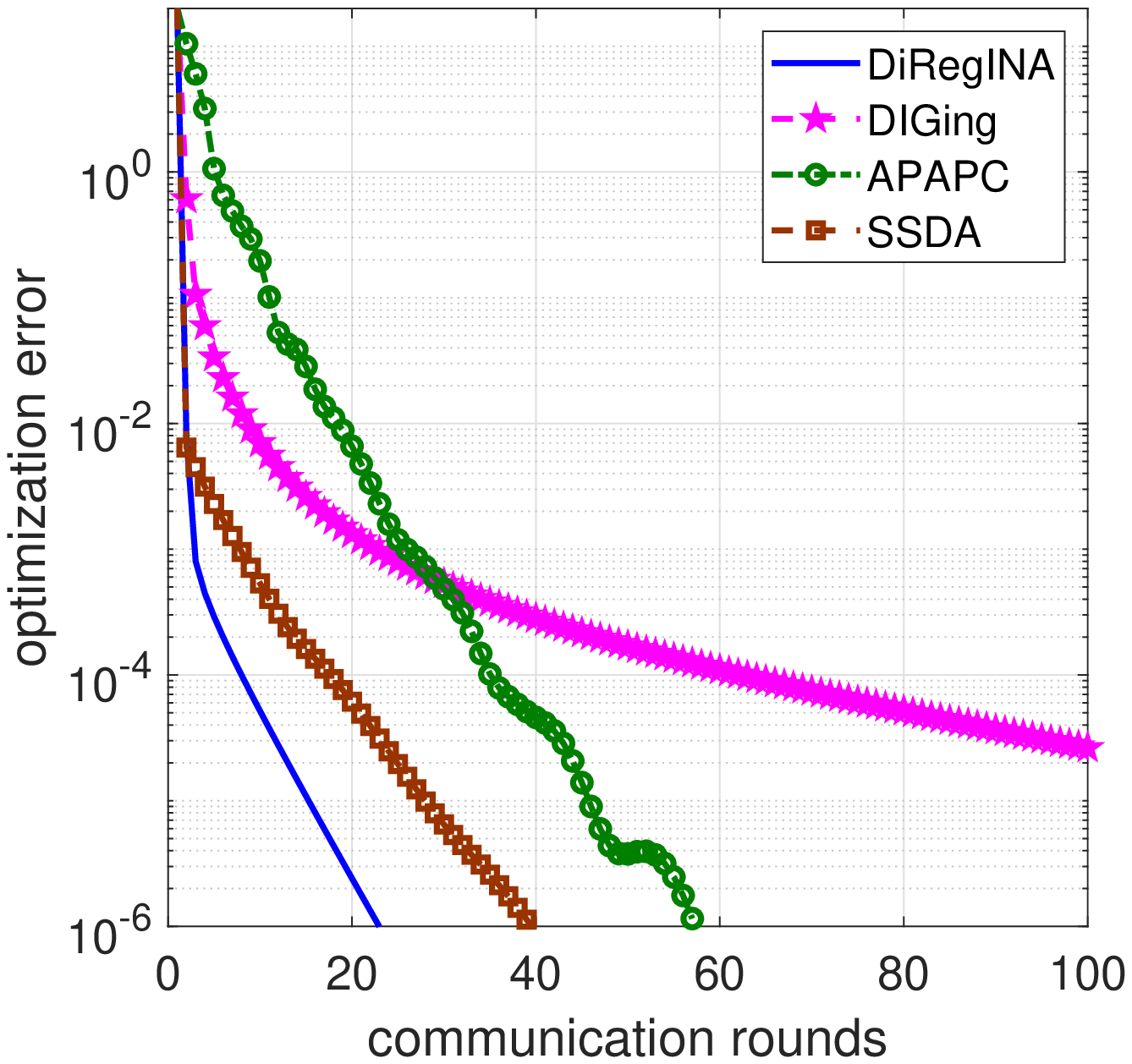}\hspace{-0.4cm}
		\label{RidgeReg_mg4_accelerated}
	}
	\caption{  Distributed ridge regression. Synthetic data on Erd\H{o}s-R\'{e}nyi graph with $\rho = 0.7$: a) $\beta/\mu=158.1$, $\sqrt{\kappa}=34.55$; b) $\beta/\mu=11.974$, $\sqrt{\kappa}=11.1$.}\vspace{-0.9cm}
	\label{fig:LogReg_accelerated}
\end{figure}

\smallskip 

The second experiment aims at comparing 
DiRegINA with   the distributed accelerated  methods    APAPC \cite{kovalev2020optimal} and  SSDA \cite{scaman2017optimal} (DIGing is used as benchmark of first-order non-accelerated schemes).  We tested these schemes on two instances of the Ridge regression problem using  synthetic data, corresponding to     $\beta/\mu \gg \sqrt{\kappa}$ and $\beta/\mu\approx \sqrt{\kappa}$. Recall that  SSDA and APAPC  converge linearly at a rate proportional to  $\sqrt{\kappa}$ while the convergence rate of  \alg depends (up to log factors) on $\beta/\mu$. The problem data are generated as follows:  the  ground truth $x^\ast\in \mathbb{R}^d$ is  a random vector,   $x^\ast\sim \mathcal{N} (\0, \bI)$, with $d=40$;  samples   $b_i\triangleq (b_i^{(j)})_{j=1}^n$, with $n=50$, are generated according to the linear model $b_i^{(j)} = a_i^{(j)\top}\bx^*+\epsilon_i^{(j)}$ where $\epsilon_i^{(j)}\sim\mathcal{N} (0, 1e-4)$. To obtain controlled values for  $\beta$, $A_i\triangleq (a_i^{(j)})_{j=1}^n$  are constructed  as follows:   we first generate $n$ i.i.d samples $A_1\triangleq (a_1^{(j)})_{j=1}^n$, with rows drawn from $\mathcal{N} (\0, \bI)$; then,  we set each    $A_i=A_1+E_k$, where $E_k$ in a random matrix with rows drawn from $\mathcal{N} (\0, \sigma\bI)$. The choices of $\sigma$ are considered resulting in two different values of $\beta$, namely:   $\sigma=1/(dn)$ and $\sigma=7.5/(dn)$, resulting in $\beta=0.31$ and $\beta= 4.08$, respectively.  {The values of the condition number read $\kappa=123.21$ and $\kappa=1.19e3$, respectively.} The network is simulated as  the Erd\H{o}s-R\'{e}nyi graph with $p=0.28$, resulting in $\rho \approx 0.7$; the number of agents is   $m=30$. The tuning of DiRegINA and DIGing is the same as in Fig.~\ref{fig:RidgReg_mg} while 
APAPC and SSDA are manually tuned for best practical performance.  

  In Fig.~\ref{fig:LogReg_accelerated},   {we plot the function residual $p^\nu$ defined in \eqref{p_nu} versus the   communication rounds; the two panels refer to two different values of   $(\beta/\mu,\sqrt{\kappa})$.} 
The figures show that even when  $\beta/\mu$ is  larger than  $\sqrt{\kappa}$, DiRegINA outperforms the accelerated first order methods; roughly, it is   from two to five time faster than the best simulated first order method.

\begin{figure}\centering
	\subfigure[]{
		\includegraphics[width=0.24\textwidth]{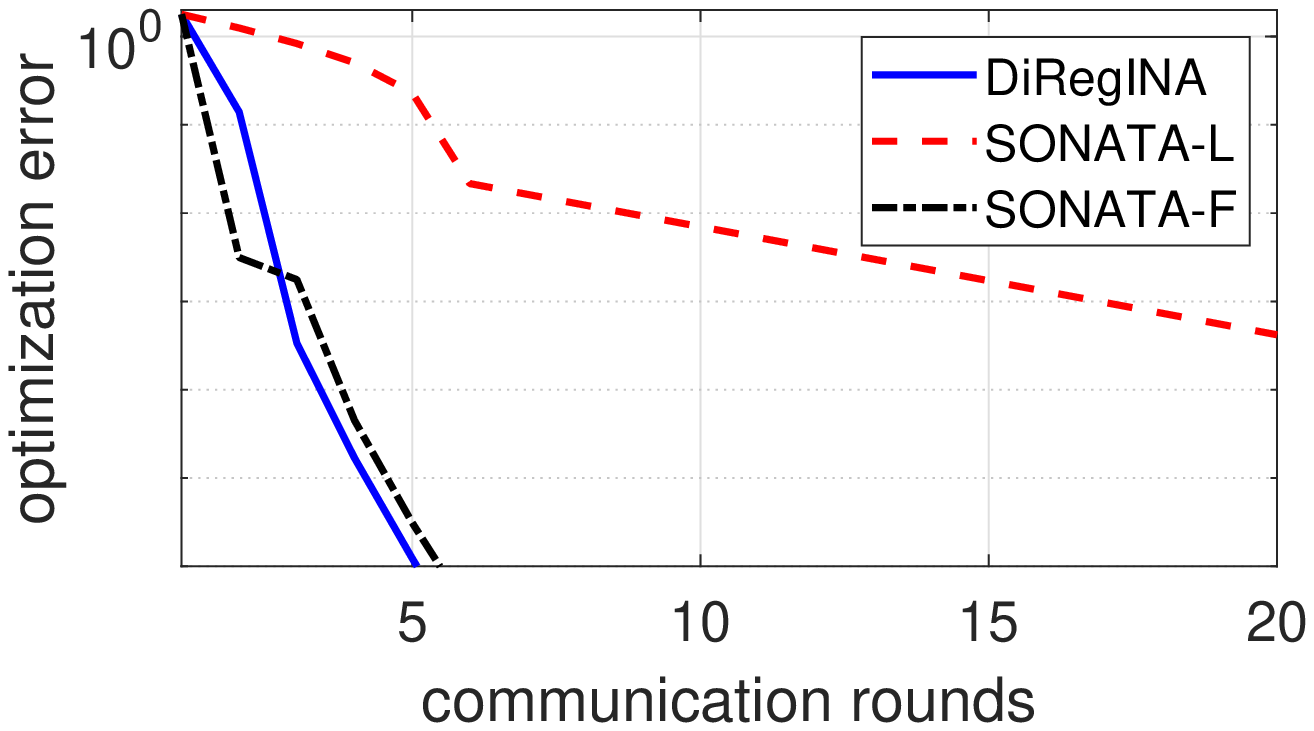}\hspace{-0.4cm}
		\label{LogReg_a4a_1}
	}
	\subfigure[]{
		\includegraphics[width=0.24\textwidth]{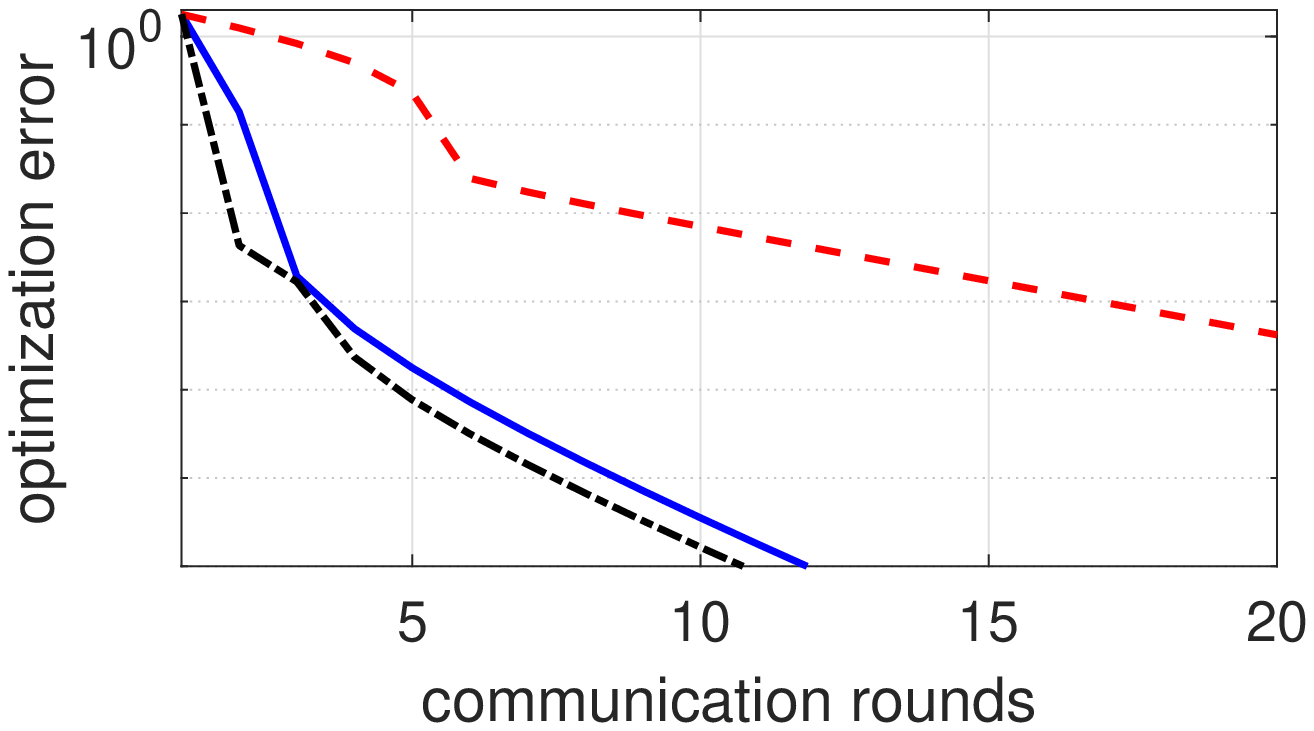}\hspace{-0.4cm}
		\label{LogReg_a4a_2}
	}\\\vspace{-0.3cm}
	\subfigure[]{
	\includegraphics[width=0.24\textwidth]{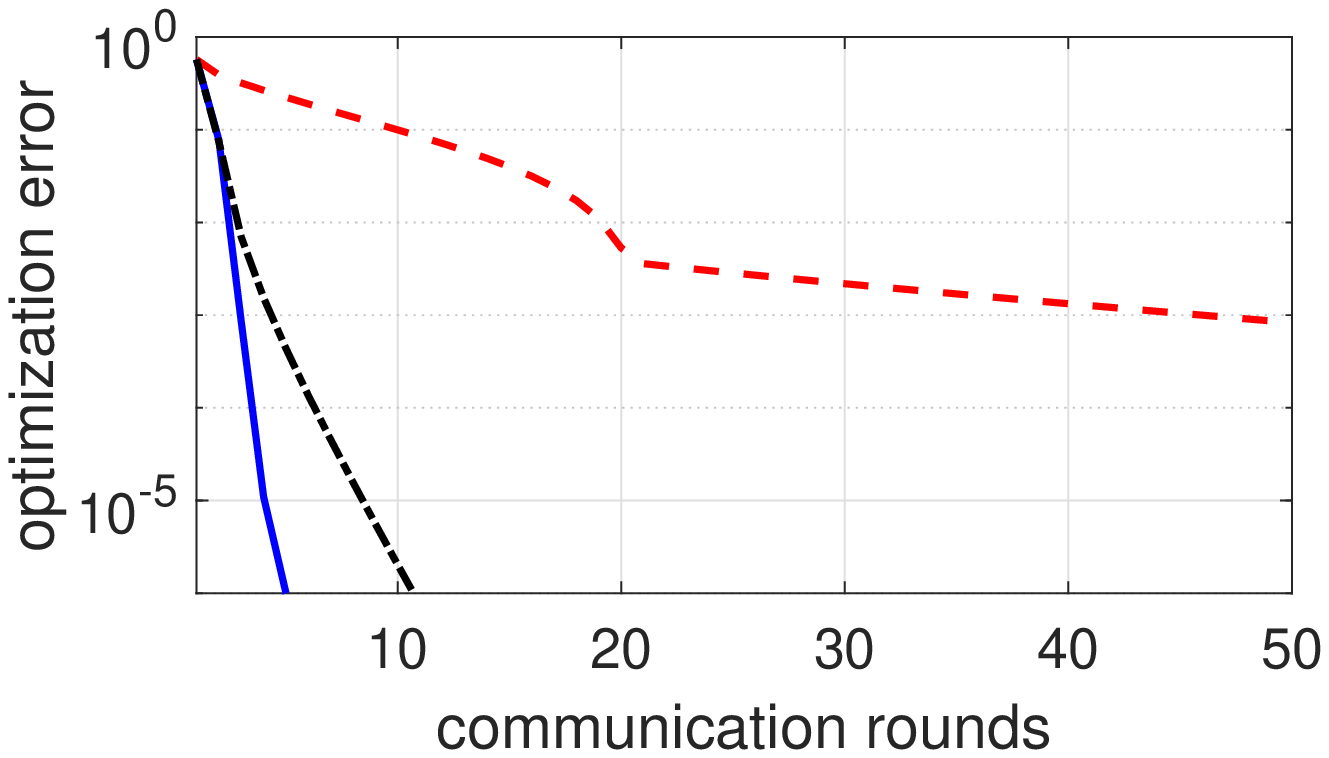}\hspace{-0.4cm}
	\label{LogReg_Synth_1}
}
\subfigure[]{
	\includegraphics[width=0.24\textwidth]{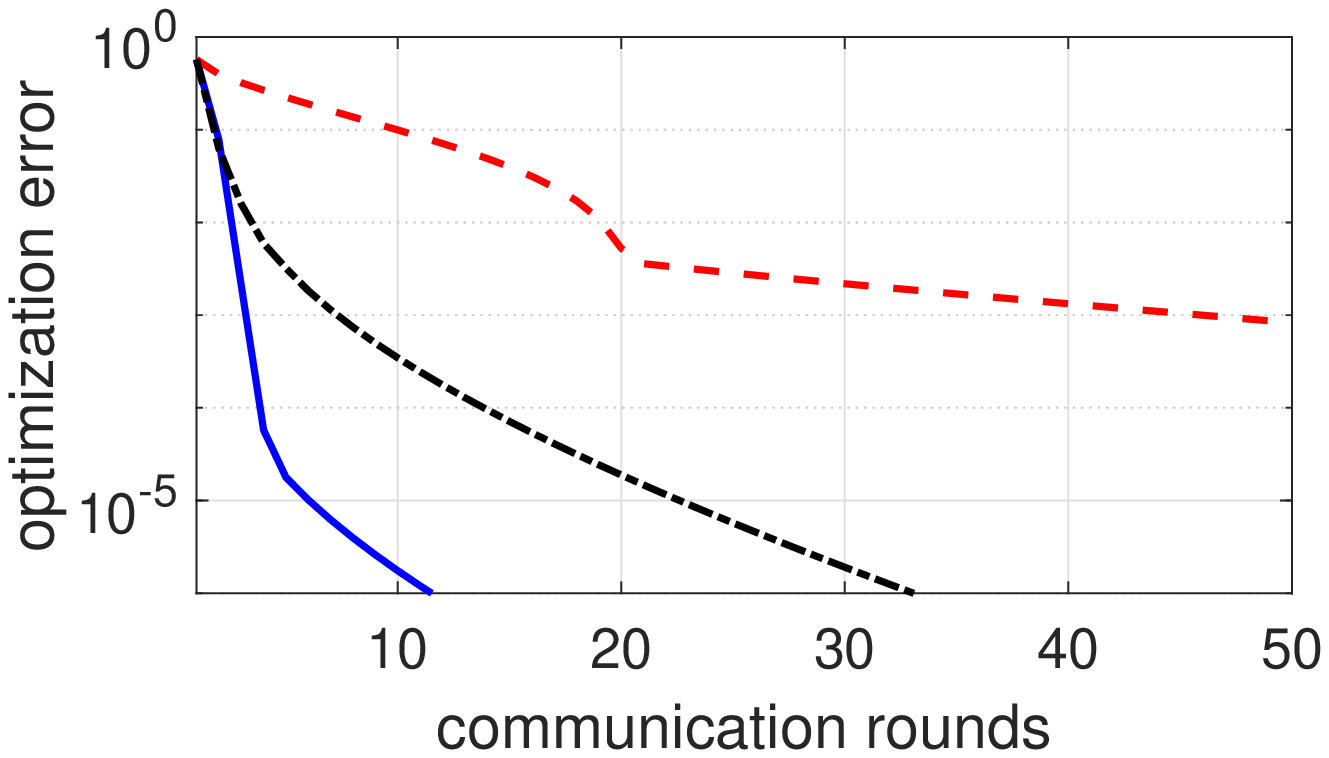}\hspace{-0.4cm}
	\label{LogReg_Synth_2} 
}\vspace{-0.2cm}
	\caption{Distributed logistic regression: 1)   \texttt{a4a} dataset on Erd\H{o}s-R\'{e}nyi graph with   (a) $\rho=0.367$ (b) $\rho=0.757$; 2)   Synthetic data on  Erd\H{o}s-R\'{e}nyi graph with  (c) $\rho=0.367$ (d) $\rho=0.757$.}\vspace{-0.4cm}
	\label{fig:LogReg_Constrauned}
\end{figure}

\subsection{Distributed Logistic Regression}\vspace{-0.1cm}\label{LR_simulations}   
We train logistic regression models, regularized by the $\ell_2$-ball constraint (with radius 1). The problem is an instance of \eqref{eq:P}, with each  $f_i(x)=-(1/n) \sum_{j=1}^n [\xi_i^{(j)}  \ln(z_i^{(j)}) +(1-\xi_i^{(j)}) \ln(1-z_i^{(j)})]$, where $z_i^{(j)}\triangleq 1/(1+e^{-\langle a_i^{(j)},x\rangle})$ and binary class labels $\xi_i^{(j)}\in\{0,1\}$ and vectors  $a_i^{(j)}$, $i=1,\ldots m$ and $j=1,\ldots, n$ are determined by the data set. We considered the  LIBSVM \texttt{a4a}  ($N=4,781$, $d=123$) and synthetic data ($N=900$, $d=150$). The latter are generated as follows:  a random ground truth $x^\ast\sim \mathcal{N} (\0, \bI)$,   i.i.d. sample $\{a_i^{(j)}\}_{i,j}$, and   $\{\xi_i^{(j)}\}_{i,j}$ are generated according to the binary model $\xi_i^{(j)} = 1$ if $\langle a_i^{(j)},\bx^*\rangle \geq 0$ and $\xi_i^{(j)} = 0$ otherwise.   We consider Erd\H{o}s-R\'{e}nyi network models with connectivity  $\rho=0.367$ and  $\rho=0.757$.  

We compare   \alg with SONATA-F and SONATA-L, since they are the only two algorithms in the list that  can handle constrained problems. We report results obtained under the following tuning: (i) both SONATA variants,  $\alpha=0.1$; and (ii)  \alg,   $M=1$ and $\tau_i=1e-3$.  {The coefficients of the matrix $\overline{W}$ are chosen according to the Metropolis–Hastings rule \cite{xiao2007distributed}}.

In Fig. \ref{fig:LogReg_Constrauned},   we plot the function residual $p^\nu$ defined in \eqref{p_nu} versus the   communication rounds, in  the different mentioned network settings. On real data [panels (a)-(b)],  \alg and  SONATA-F performs equally well, outperforming  SONATA-L (first-order method). When tested on   the synthetic problem  [panel  (c)-(d)]  with less local samples  $n$ and larger dimension $d$, \alg shows  a consistently faster rate, while SONATA-F slows down on less connected networks.  Notice also the two-phase rate of \alg, as predicted by our theory:   an initial superlinear rate up to (approximately) the statistical precision, followed by  a linear one for high accuracy.\vspace{-0.2cm}

\section{Conclusions}\vspace{-0.1cm}

We proposed  the first second-order distributed algorithm for convex and strongly convex problems over  meshed networks with \textit{global} communication complexity bounds which, up to the network dependent factor $\widetilde{\mathcal{O}}(1/\sqrt{1-\rho})$, (almost) match the iteration complexity of centralized second-order method \cite{Nesterov--Cubic06} in the regime when the desired accuracy is moderate. We  showed that this regime is reasonable when one considers ERM problems for which there is no need to optimize beyond the statistical error.
Importantly, our method avoids expensive communications of Hessians over the network and keeps the amount of information sent in each communication round similar to first-order methods.

This paper is just a starting point towards a theory of second-order methods with  performance guarantees on meshed networks under statistical similarity;  many questions remain open. An obvious one is incorporating acceleration    to improve communication complexity bounds under statistical similarity.  
{A first attempt towards this goal is the  follow-up work \cite{agafonov2021accelerated}, where  an accelerated second-order method  exploiting  statistical similarity has been analyzed for master/workers architectures. The extension to arbitrary graphs remains an open problem.}
 Second, our main goal here has been  decreasing   communications, which does not guarantee optimal  oracle (computational) complexity--this is because we did not take advantage of the   finite-sum structure of the {\it local}  optimization problems. Stochastic optimization algorithms equipped with Variance Reduction (VR) techniques  have been proved to be quite effective to obtain cheaper iterations while preserving fast convergence  \cite{johnson2013accelerating, hendrikx2020optimal}. However, these methods   do not exploit any statistical similarity, resulting in less favorable communication complexity whenever $\beta/\mu \ll Q/\mu$.  It  would be then interesting to investigate whether   VR techniques    can  improve both communication and oracle complexity when statistical similarity is explicitly employed in the algorithmic design.

\section*{Acknowledgements}
 {The work of A. Daneshmand and G. Scutari was supported by  the USA NSF Grant CIF 1719205, the ARO Grant No. W911NF1810238, and the ONR Grant No. 1317827.} 
 The work of P. Dvurechensky and  A. Gasnikov was prepared within the framework of the HSE University Basic Research Program and is supported by the Ministry of Science and Higher Education of the Russian Federation (Goszadaniye) No. 075-00337-20-03, project No. 0714-2020-0005.

\bibliography{references}
\bibliographystyle{icml2021}


\appendix
\onecolumn
\newpage

%

{\Large \textbf{Supplementary Material}}

This supplementary material is organized as follows. 
Sec.~\ref{sec:numerical_experiments_appendix} provides additional numerical experiments, complementing those in  Sec.~\ref{sec:numerical_experiments} of the main paper. 
In Sec. \ref{sec:asymptotic_convergence_proof}, we establish  asymptotic convergence of \alg and prove some intermediate results that are instrumental for  our rate analysis.
Sec.~\ref{sec:cvx_case_proof}-\ref{subsec:scvx_beta_geq_mu_Qcase} are devoted to prove   Sec.~\ref{sec:main_results} of the paper, namely:  
Theorem~\ref{thm:cvx_case} is proved in Sec. \ref{sec:cvx_case_proof};  Theorem \ref{thm:scvx_case_beta_leq_mu} and Corollary  \ref{corr:scvx_case_beta_leq_mu_initialization_iid} are proved in Sec. \ref{sec:scvx_case_proof}; and finally, Theorem \ref{thm:scvx_case_beta_geq_mu} is proved in Sec. \ref{subsec:scvx_beta_geq_mu}.

Furthermore, there are some convergence results stated in  Table \ref{table:rate} that could  not be stated in the paper because of space limit; they are reported here  in the following sections: i) the case of quadratic functions  $f_i$ in the setting of  Theorem \ref{thm:scvx_case_beta_leq_mu} is stated in Theorem \ref{thm:scvx_case_beta_leq_mu_Qcase} in Sec. \ref{sec:scvx_case_beta_leq_mu_Qcase_proof} while the  case of quadratic $f_i$'s in the setting of  Theorem \ref{thm:scvx_case_beta_geq_mu} is stated in Theorem \ref{thm:scvx_case_beta_geq_mu_QUAD},   Sec. \ref{subsec:scvx_beta_geq_mu_Qcase}.


\section{Additional Numerical Experiments}\label{sec:numerical_experiments_appendix}
\subsection*{Convex (non-strongly convex) objective}
We consider a (non-strongly) convex  instance of  the   regression problem. Specifically, we have:   $f_i(x)=(1/2n)\norm{A_i x-b_i}^2$ and $\mathcal{K}=\mathbb{R}^d$, where $A_i$ and $b_i$ are determined by the scaled LIBSVM dataset \texttt{space-ga} ($N=3107$, $d=6$, and   $\beta=0.6353$). The network is simulated as the Erd\H{o}s-R\'{e}nyi network model, with $m=30$ and two  connectivity  values, $\rho=0.3843$ and  $\rho=0.8032$.  We compared \alg with the algorithms described in  Sec.~\ref{sec:main_results}, namely: NN-1, NT,   DIGing and SONATA-F. Note that NN-1 and   NT are not guaranteed  to converge when applied to convex (non-strongly convex) functions. The tuning of the algorithm is the same as the one described in  Sec.~\ref{Regression_simulations}. In Fig. \ref{fig:RidgeReg_space_ga}, we plot the optimization error versus the communication rounds achieved by the aforementioned algorithms in the two network settings, $\rho=0.3843$ and  $\rho=0.8032$.  As already observed for the other simulated problems (cf. Sec.~\ref{Regression_simulations}),  SONATA-F   shows similar performance of \alg when running on well-connected networks while its performance deteriorates in poorly connected network.  NT seems to be non-convergent while NN1 and   DIGing converge, yet slow, to acceptable accuracy.

 \begin{figure}[h!]\centering
\vspace{-.5cm}
	\subfigure[]{
		\includegraphics[width=0.4\textwidth]{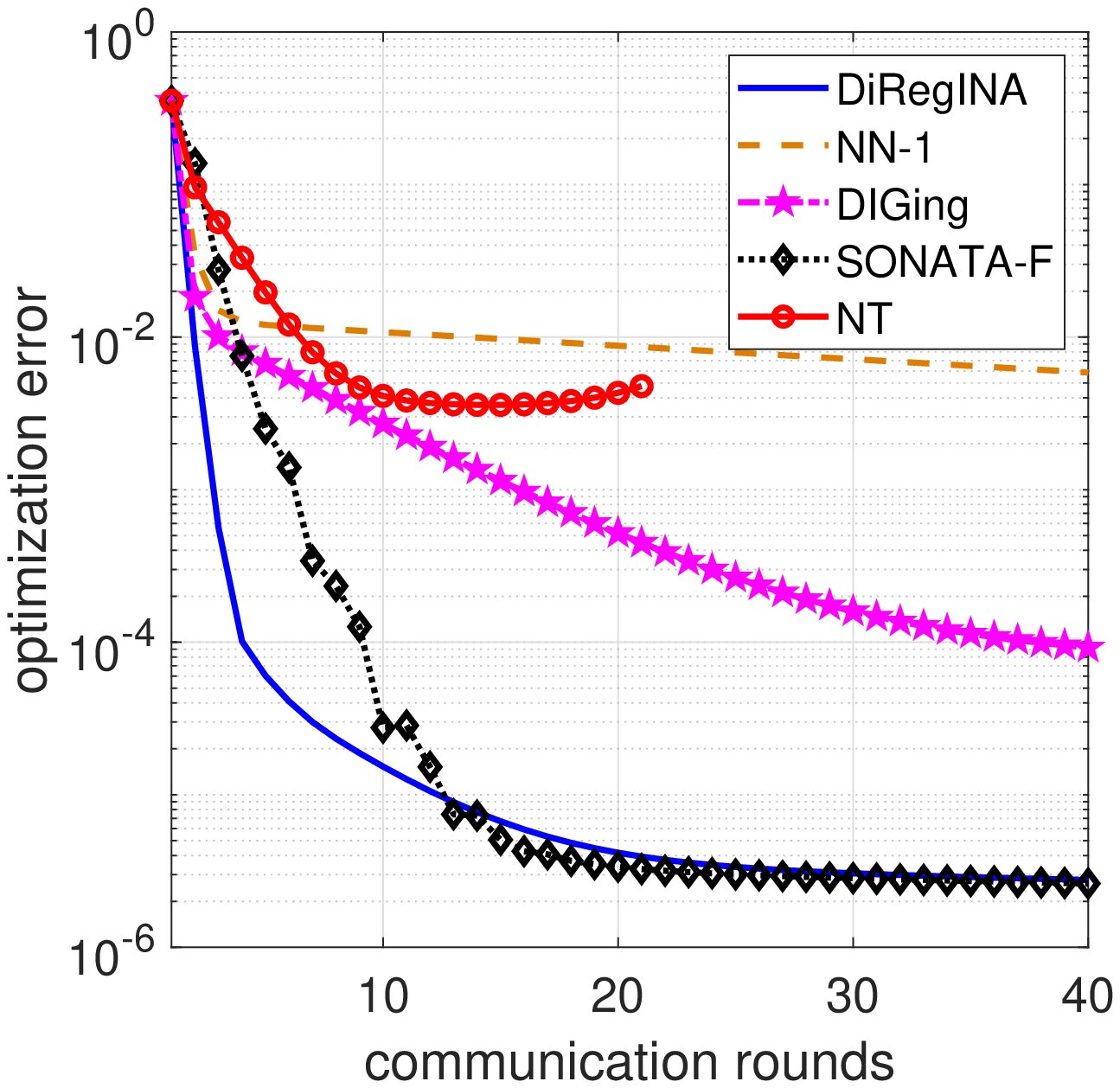}\hspace{-0.4cm}
		\label{RidgeReg_space_ga_1}
	}
	\subfigure[]{
		\includegraphics[width=0.4\textwidth]{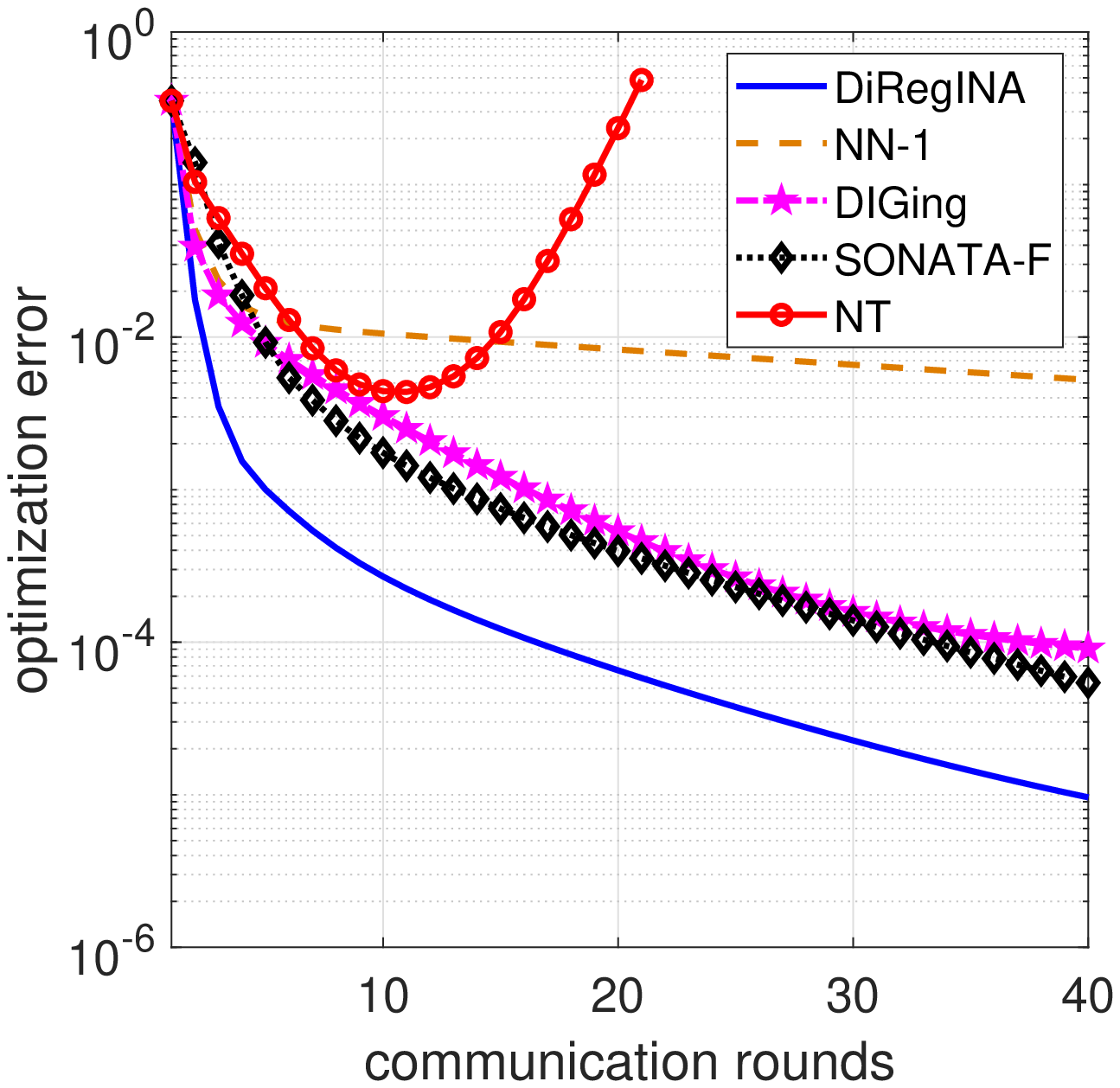}\hspace{-0.4cm}
		\label{RidgeReg_space_ga_2}
	}\\\vspace{-0.3cm}
	\caption{Distributed ridge regression  on     \texttt{space-ga} dataset and Erd\H{o}s-R\'{e}nyi graph with    (a) $\rho=0.3843$ (b) $\rho=0.8032$.}
	\label{fig:RidgeReg_space_ga}
\end{figure}

\subsection*{$O(1/\sqrt{mn})$-regularized logistic regression}
We train logistic regression models, regularized by an additive $\ell_2$-norm (with coefficient $\lambda>0$). The problem is an instance of \eqref{eq:P}, with each  $f_i(x)=-(1/n) \sum_{j=1}^n [\xi_i^{(j)}  \ln(z_i^{(j)}) +(1-\xi_i^{(j)}) \ln(1-z_i^{(j)})]+(\lambda/2)||x||^2$ and $\mathcal{K}=\mathbb{R}^d$, where $z_i^{(j)}\triangleq 1/(1+e^{-\langle a_i^{(j)},x\rangle})$ and binary class labels $\xi_i^{(j)}\in\{0,1\}$ and vectors  $a_i^{(j)}$, $i=1,\ldots m$ and $j=1,\ldots, n$ are determined by the data set.  We considered the  LIBSVM \texttt{a4a}  ($N=4,781$, $d=123$) and we set $\lambda=1/\sqrt{mn}$.  The Network is simulated according to the  Erd\H{o}s-R\'{e}nyi model with $m=30$ and connectivity  $\rho=0.3372$ and  $\rho=0.7387$.   

We compare   \alg, NN-1, DIGing, SONATA-F and NT, all initialized from the same   random  point. The free parameters of the algorithms are tuned manually;  the best practical performance are observed with the following tuning:     \alg is tuned as described in Sec.~\ref{LR_simulations},  i.e.,  $\tau=1$, $M=1e-3$, and $K=1$;  NN-1,   $\alpha=1e-3$ and $\epsilon=1$; 
DIGing, stepsize equal to $1$; SONATA-F, $\tau=0.1$;  NT,  $\epsilon=0.2$ and $\alpha=0.05$.  

In Fig. \ref{fig:RidgeReg_space_ga}, we plot  the optimization error versus the  communication rounds achieved by the aforementioned algorithms in  two network settings corresponding to $\rho=0.3372$ and  $\rho=0.7387$.    In both settings (panels  (a)-(b)), NN-1 and DIGing still exhibits slow convergence, with a slight advantage  of DIGing over NN-1.    \alg, NT and SONATA-F,  perform  similarly, with   \alg showing some improvements when the network is better connected [panel (a)]. \bigskip

\begin{figure}\centering
	\subfigure[]{
		\includegraphics[width=0.4\textwidth]{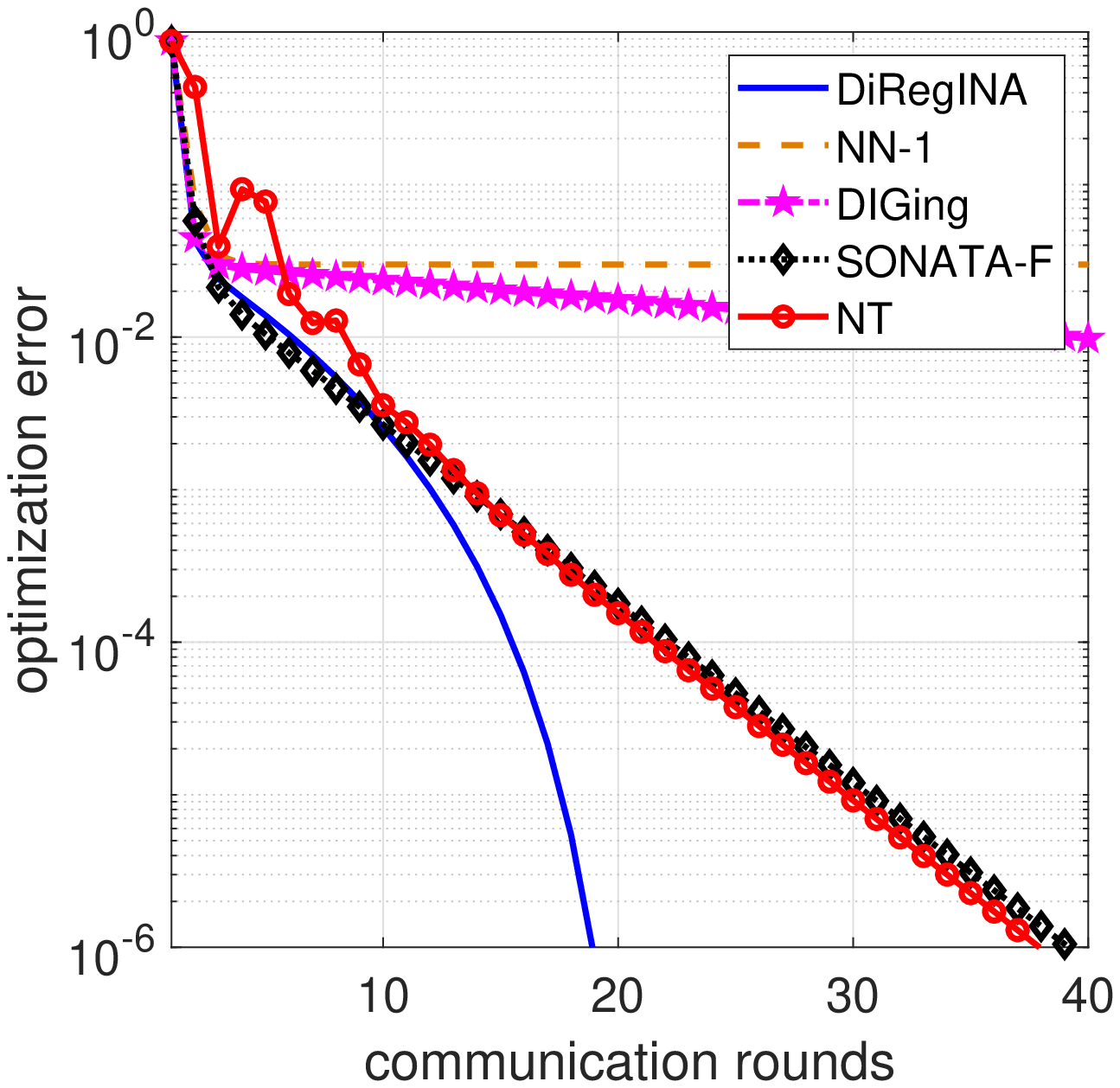}\hspace{-0.4cm}
		\label{LogReg_reg_a4a_1}
	}
	\subfigure[]{
		\includegraphics[width=0.4\textwidth]{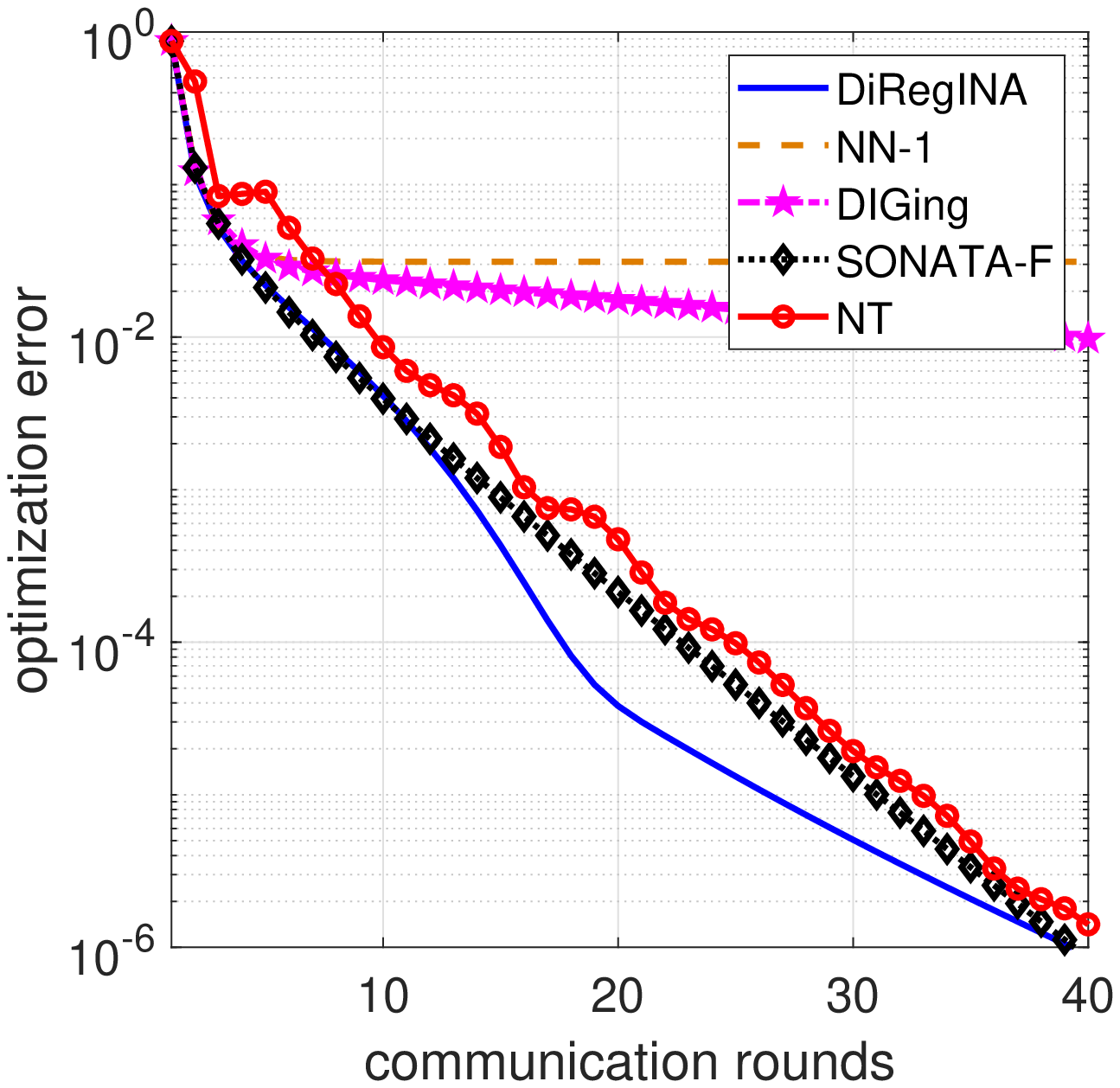}\hspace{-0.4cm}
		\label{LogReg_reg_a4a_2}
	}\\\vspace{-0.3cm}
	\caption{Distributed logistic regression  on    \texttt{a4a} dataset and Erd\H{o}s-R\'{e}nyi graph with  (a) $\rho=0.3372$ (b) $\rho=0.7387$.}
	\label{fig:LogReg_reg_a4a}
\end{figure}

\section{Notations and Preliminary Results}\label{sec:prem_results}
We begin introducing some notation  which will be used in all the proofs,  along with some preliminary results.

Define \begin{equation}\label{delta_i_def}
\bepsilon_i^\nu\triangleq \gradtrack_i^\nu-\nabla F(\bx_i^\nu) \quad  \text{and} \quad  \bbeta_i^\nu\triangleq \nabla^2f_i(\bx_i^\nu)-\nabla^2F(\bx_i^\nu),
\end{equation}
The local surrogate function $\tF_i(\by;\bx_i^\nu)$   in \eqref{def:x_nuplus} can be rewritten as
\begin{equation}\label{tF_i_rewrite}
\tF_i(\by;\bx_i^\nu)\triangleq F(\bx_i^\nu)+\left\langle \nabla F(\bx_i^\nu)+\bepsilon_i^\nu,\by-\bx_i^\nu\right\rangle+\frac{1}{2}\left\langle \left[\nabla^2F(\bx_i^\nu)+\bbeta_i^\nu+\tau_i\bI\right] (\by-\bx_i^\nu),\by-\bx_i^\nu\right\rangle+\frac{M_i}{6}\norm{\by-\bx_i^\nu}^3.
\end{equation}

Let us recall the following basic result, which is a  consequence of Assumption~\ref{assump:nabla2F_LC}.
\begin{lemma}[{\citet[Lemma 1.2.4]{nesterov2018lectures}}]
	\label{res_lemma}
	Let $F:\mathbb{R}^d\rightarrow \mathbb{R}$ be a twice-differentiable function satisfying Assumption \ref{assump:nabla2F_LC}. Then, for all $\bx,\by\in\mathbb{R}^d$,
	\begin{align}
	&\big|F(\by)-F(\bx)-\left\langle \nabla F(\bx),\by-\bx\right\rangle -\frac{1}{2}\left\langle \nabla^2 F(\bx)(\by-\bx),\by-\bx\right\rangle\big|\leq  \frac{\Lhessian}{6}\norm{\by-\bx}^3.
	\label{res_2}
	\\
	&\norm{\nabla F(\by)-\nabla F(\bx)-\nabla^2 F(\bx)(\by-\bx)}\leq  \frac{\Lhessian}{2}\norm{\by-\bx}^2.
	\label{res1}
	\end{align}
\end{lemma}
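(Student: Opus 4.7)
The plan is to prove both inequalities directly from the fundamental theorem of calculus, exploiting the Lipschitz continuity of $\nabla^2 F$ (Assumption~\ref{assump:nabla2F_LC}). The gradient bound \eqref{res1} comes out essentially immediately; the function-value bound \eqref{res_2} follows from a double integration of the same estimate. The whole argument is a one-pager and does not require any new ideas beyond Taylor remainder manipulations, so I do not anticipate any real obstacle.

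For \eqref{res1}, I would write
\begin{equation*}
\nabla F(\by)-\nabla F(\bx) \;=\; \int_0^1 \nabla^2 F(\bx+t(\by-\bx))(\by-\bx)\,dt,
\end{equation*}
subtract $\nabla^2 F(\bx)(\by-\bx) = \int_0^1 \nabla^2 F(\bx)(\by-\bx)\,dt$ from both sides, and take norms inside the integral. Then Assumption~\ref{assump:nabla2F_LC} gives $\|\nabla^2 F(\bx+t(\by-\bx))-\nabla^2 F(\bx)\|\leq L_H\,t\,\|\by-\bx\|$, and evaluating $\int_0^1 t\,dt = 1/2$ yields the advertised factor $L_H/2$.

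For \eqref{res_2}, I would start from $F(\by)-F(\bx) = \int_0^1 \langle\nabla F(\bx+t(\by-\bx)),\by-\bx\rangle\,dt$, subtract $\langle\nabla F(\bx),\by-\bx\rangle$, and apply the fundamental theorem of calculus once more to the integrand to get
\begin{equation*}
F(\by)-F(\bx)-\langle\nabla F(\bx),\by-\bx\rangle \;=\; \int_0^1\!\!\int_0^t \langle \nabla^2 F(\bx+s(\by-\bx))(\by-\bx),\by-\bx\rangle\,ds\,dt.
\end{equation*}
Since $\int_0^1\!\int_0^t ds\,dt = 1/2$, subtracting the Hessian term rewrites the left-hand side of \eqref{res_2} as $\int_0^1\!\int_0^t \langle[\nabla^2 F(\bx+s(\by-\bx))-\nabla^2 F(\bx)](\by-\bx),\by-\bx\rangle\,ds\,dt$. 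Bounding the inner matrix difference via the Lipschitz Hessian assumption by $L_H\,s\,\|\by-\bx\|$ and computing $\int_0^1\!\int_0^t s\,ds\,dt = 1/6$ produces the constant $L_H/6$.

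Since the result is a verbatim restatement of Nesterov's Lemma~1.2.4, in a published version I would simply cite it (as the authors do) and skip the calculation. The only potential subtlety is ensuring the operator-norm bound on $\nabla^2 F(\bx+t(\by-\bx))-\nabla^2 F(\bx)$ is what is promised by Assumption~\ref{assump:nabla2F_LC}; assuming the Lipschitz constant $L_H$ is defined in the operator norm (as is standard), the argument is immediate.
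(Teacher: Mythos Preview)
Your proposal is correct and follows the standard integral-remainder argument; the paper itself does not prove the lemma but simply cites \citet[Lemma~1.2.4]{nesterov2018lectures}, whose proof is essentially the one you sketched. Your suggestion to cite Nesterov directly matches exactly what the authors do.
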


Setting $\bx=\bx_i^\nu$ in \eqref{res_2} implies
\begin{equation*}
F(\bx_i^\nu)+\left\langle \nabla F(\bx_i^\nu),\by-\bx_i^\nu\right\rangle +\frac{1}{2}\left\langle \nabla^2 F(\bx_i^\nu)(\by-\bx_i^\nu),\by-\bx_i^\nu\right\rangle\leq F(\by)+ \frac{\Lhessian}{6}\norm{\by-\bx_i^\nu}^3,\quad \forall \by\in\mathbb{R}^d,
\end{equation*}
which, together with \eqref{tF_i_rewrite}, gives the following upper bound for the surrogate function $\tF_i$ defined in \eqref{tF_i_rewrite}:
\begin{equation}
\label{F_tilde_upperbound}
\begin{aligned}
\tF_i(\by;\bx_i^\nu) \leq & F(\by)+\frac{1}{2}\norm{\by-\bx_i^\nu}^2_{(\beta+\tau_i)\bI}+\frac{M_i+\Lhessian}{6}\norm{\by-\bx_i^\nu}^3+\left\langle \bepsilon_i^\nu, \by-\bx_i^\nu\right\rangle,\quad \forall \by\in\mathbb{R}^d,
\end{aligned}
\end{equation}
where for a positive semidefinite matrix $A$, $\|x\|_A^2 \triangleq \langle Ax,x \rangle$.
We also denote
\begin{equation}\label{def:p_deltaX}
\Delta x_i^\nu\triangleq \bx_i^{\nu+}-\bx_i^\nu,\quad \delta^\nu\triangleq (\delta_i^\nu)_{i=1}^m, \quad J\triangleq 11^\top/m,
\end{equation}
where we remind that $\bx_i^{\nu+}$ is obtained by the minimization of the local surrogate function $\tF_i(\by;\bx_i^\nu)$. 
The rest of the symbols and notations are as defined in the main manuscript.

\section{Asymptotic convergence of \alg}\label{sec:asymptotic_convergence_proof}

In this section we prove the following theorem stating   asymptotic convergence of \alg.
\begin{theorem}\label{thm:asymptotic_conv}
	Let Assumptions \ref{convex-case} and \ref{assump:nabla2F_LC}-\ref{assump:network} hold,  $M_i\geq \Lhessian$ and $\tau_i=2\beta$ for all $i=1,\ldots,m$. If a reference matrix $\overline{W}$ satisfying Assumption \ref{assump:weight} is used   in steps \eqref{DiRegINA_pseudocode}-\eqref{grad_track_update},  with $\rho  \triangleq  \lambda_{\max}(\overline{W} - \bJ) < 1$ and     $K=\widetilde{\mathcal{O}}(1/\sqrt{1-\rho})$ (explicit condition is provided in eq.  \eqref{K_cond_asymptotic}), then $p^\nu\rightarrow 0$ and  $||\bx_i^\nu-\bx_j^\nu||\rightarrow 0$, as  $\nu\rightarrow \infty$ for all $i,j=1,\ldots,m$.
\end{theorem}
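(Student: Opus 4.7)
The plan is to construct a Lyapunov function $V^\nu$ whose monotone decrease simultaneously controls three quantities: the local step sizes $\|\Delta x_i^\nu\|$, the consensus error $e_c^\nu\triangleq\sum_i\|\bx_i^\nu-\bar{\bx}^\nu\|^2$, and the gradient-tracking error $e_g^\nu\triangleq\sum_i\|\bepsilon_i^\nu\|^2$. Summability of these three will give both the consensus claim and $p^\nu\to 0$.

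\textbf{Step 1 (per-agent quasi-descent).} Using \eqref{res_2} in the direction opposite to the one that produced \eqref{F_tilde_upperbound} (valid because $M_i\ge \Lhessian$), I would first obtain the majorization $F(\by)\le \tF_i(\by;\bx_i^\nu)-\langle \bepsilon_i^\nu,\by-\bx_i^\nu\rangle$ for all $\by\in\mathcal{K}$, exploiting that in the convex setting $\bbeta_i^\nu+\tau_i\bI=\nabla^2 f_i(\bx_i^\nu)-\nabla^2 F(\bx_i^\nu)+2\beta\bI\succeq \beta\bI\succeq 0$. Combining this with the strong convexity of $\tF_i$ of modulus $\tau_i=2\beta$ (cubic term is convex; quadratic Hessian $\succeq \tau_i\bI$ after using $\bbeta_i^\nu\succeq -\beta\bI$), and a Young split of the tracking cross-term, I would deduce
\begin{equation*}
F(\bx_i^{\nu+})\;\le\;F(\bx_i^\nu)\;-\;\tfrac{\beta}{2}\|\Delta x_i^\nu\|^2\;+\;\tfrac{1}{2\beta}\|\bepsilon_i^\nu\|^2.
\end{equation*}

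\textbf{Step 2 (contractions from $K$ consensus rounds).} Standard spectral bookkeeping for the $K$-fold gossip with $\lambda_{\max}(\overline W-\bJ)=\rho$, applied to steps \eqref{DiRegINA_pseudocode}--\eqref{grad_track_update}, yields the gradient-tracking recursions
\begin{align*}
e_c^{\nu+1}&\le \rho^{2K}\bigl[(1+\varepsilon)\,e_c^\nu+C_1\textstyle\sum_i\|\Delta x_i^\nu\|^2\bigr],\\
e_g^{\nu+1}&\le \rho^{2K}\bigl[(1+\varepsilon)\,e_g^\nu+C_2\textstyle\sum_i\|\Delta x_i^\nu\|^2+C_3\,e_c^\nu\bigr],
\end{align*}
for any small $\varepsilon>0$, with constants depending on the Lipschitz moduli of $\nabla F$. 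The choice of $K$ in \eqref{K_cond_asymptotic} is precisely the one making $\rho^{2K}(1+\varepsilon)<1$ with enough slack to absorb the cross-couplings below.

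\textbf{Step 3 (Lyapunov descent and limits).} Setting $V^\nu\triangleq\sum_i\bigl(F(\bx_i^\nu)-F^\star\bigr)+c_1\,e_c^\nu+c_2\,e_g^\nu$, I would pick $c_2$ large enough so the $+\|\bepsilon_i^\nu\|^2$ kickback from Step~1 is dominated by the contraction of $e_g^\nu$, and then $c_1$ large enough so the $e_c^\nu$ coupling into the $e_g^\nu$ recursion is dominated by the contraction of $e_c^\nu$. This yields $V^{\nu+1}\le V^\nu-\eta\bigl(\sum_i\|\Delta x_i^\nu\|^2+e_c^\nu+e_g^\nu\bigr)$ for some $\eta>0$. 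Since $V^\nu\ge 0$ by convexity, telescoping forces all three sequences to zero, so $\|\bx_i^\nu-\bx_j^\nu\|\to 0$ follows from $e_c^\nu\to 0$. For $p^\nu\to 0$: the first-order optimality condition of $\bx_i^{\nu+}$ on $\tF_i$, combined with $\Delta x_i^\nu\to 0$ and $\bepsilon_i^\nu\to 0$, gives $\nabla F(\bx_i^\nu)\to 0$; this propagates to $\nabla F(\bar{\bx}^\nu)\to 0$ via Lipschitzness of $\nabla F$ and $e_c^\nu\to 0$, and convexity closes the argument (using level-set boundedness of $\{\bar{\bx}^\nu\}$, itself a byproduct of the descent in Step~1).

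\textbf{Main obstacle.} The delicate point is the small-gain balancing in Step~3: $c_1,c_2$ and the threshold on $\rho^K$ must be tuned jointly so the positive coupling cross-terms are all dominated by the negative contraction contributions. A secondary subtlety — sidestepped at the asymptotic level but crucial for the rate analysis in later sections — is that the cubic-regularization terms do not enter the quadratic Lyapunov bookkeeping explicitly, and must be controlled a~posteriori via boundedness of $\{\bx_i^\nu\}$, which requires either a preliminary level-set lemma from Step~1 or an inductive argument.
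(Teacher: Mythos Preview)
Your approach is essentially the paper's: a per-agent quasi-descent with a $\tfrac{1}{2\beta}\|\bepsilon_i^\nu\|^2$ kickback (your Step~1 is the paper's Sec.~\ref{sec:opt_err_bounds}), spectral contraction of consensus/tracking disagreement after $K$ gossip rounds (Step~2 $\leftrightarrow$ Sec.~\ref{sec:net_err_bounds}), and a small-gain Lyapunov combining optimization and network errors (Step~3 $\leftrightarrow$ Sec.~\ref{sec:asymptotic_convergence_Step3}). The paper merely packages the two network errors into a single scalar $\tilde{\delta}^\nu=\sqrt{2}(\|\gradtrack_\bot^\nu\|+2L_{\max}\|\bx_\bot^\nu\|)$ and uses the two-term Lyapunov $wp^\nu+(\tilde{\delta}^\nu)^2$, whereas you keep $e_c^\nu$ and $e_g^\nu$ separate; the balancing of $w,c_w$ (paper) versus $c_1,c_2$ (you) is the same small-gain exercise.

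Two fixable slips are worth flagging. First, the contraction recursion you write in Step~2 does \emph{not} hold for $e_g^\nu=\sum_i\|\bepsilon_i^\nu\|^2$ as you defined it: $\bepsilon_i^\nu=\gradtrack_i^\nu-\nabla F(\bx_i^\nu)$ mixes the gossip variable with an evaluation at a non-gossiped point, so no clean $\rho^{2K}$-contraction applies. The right state variable is the tracking \emph{disagreement} $\|\gradtrack_\bot^\nu\|^2$, for which Lemma~\ref{cons_track_err_prop} gives exactly your recursion; you then bound $\|\bepsilon^\nu\|^2\lesssim \|\gradtrack_\bot^\nu\|^2+L_{\max}^2\,e_c^\nu$ (this is the paper's \eqref{delta_upperbnd}) and feed that into Step~1. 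Second, the conclusion $\nabla F(\bx_i^\nu)\to 0$ from the first-order condition of $\bx_i^{\nu+}$ is only valid when $\mathcal{K}=\mathbb{R}^d$; in the constrained case you must instead test the variational inequality at $\widehat{\bx}$ and use convexity directly, as the paper does in \eqref{x_hat_opt_cond}--\eqref{p_to_zero_bound}. Neither issue derails your argument, but both require rewording before the proof is complete.
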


We prove the theorem in three main steps: \begin{itemize}
    \item[]  {\bf Step 1 (Sec. \ref{sec:opt_err_bounds}):} Deriving optimization   bounds  on the  per-iteration decrease of $p^{\nu}$;
    \item[]  {\bf Step 2 (Sec. \ref{sec:net_err_bounds}):}  Bounding the gradient tracking error  $\delta^\nu$,     which in turn affects the per-iteration decrease of $p^{\nu}$;
    \item[]  {\bf Step 3 (Sec. \ref{sec:asymptotic_convergence_Step3}):} Constructing a proper Lyapunov function based  on the error terms in the previous two steps, whose dynamics imply asymptotic convergence of \alg.   
\end{itemize}    
To simplify the derivations, we study the case of strongly convex or nonstronlgy convex $F$ together, by setting $\mu=0$ in the latter case. 

\subsection{Optimization error bounds}\label{sec:opt_err_bounds}
In this subsection we establish an upper bound for $p^{\nu+1}-p^\nu$ [cf.~\eqref{Descent_Final3}]. We begin with two technical intermediate results--Lemma  \ref{lemma:descent_cubic} and Lemma~\ref{lemma_descent_x_plus_II}.
\begin{lemma}\label{lemma:descent_cubic}
	Under Assumption \ref{convex-case}, there holds 
	\begin{equation}
	\label{eq:sufficient_descent_dampedNewton_fi}
	\begin{aligned}
	\tF_i (\bx_i^{\nu+} ;  \bx_i^\nu) 
	\leq & \; \tF_i (\bx_i^\nu ;\bx_i^\nu) -\frac{M_i}{3}\norm{\Delta\bx_i^\nu}^3-\frac{\mu_i+\tau_i}{2}\norm{\Delta\bx_i^\nu}^2.
	\end{aligned}
	\end{equation}
\end{lemma}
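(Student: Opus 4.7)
My plan is to exploit the cubic-Newton structure of the surrogate $\tF_i$ together with the first-order optimality condition satisfied by its minimizer $\bx_i^{\nu+}$ over $\mathcal{K}$. Write $H_i^\nu \triangleq \nabla^2 F(\bx_i^\nu) + \bbeta_i^\nu + \tau_i \bI$, which by definition of $\bbeta_i^\nu$ in \eqref{delta_i_def} equals $\nabla^2 f_i(\bx_i^\nu) + \tau_i \bI$, and set $g_i^\nu \triangleq \nabla F(\bx_i^\nu) + \bepsilon_i^\nu = \gradtrack_i^\nu$. The first step is to evaluate the descent $\tF_i(\bx_i^{\nu+};\bx_i^\nu) - \tF_i(\bx_i^\nu;\bx_i^\nu)$ directly from the explicit expression \eqref{tF_i_rewrite}, producing the identity
\[
\tF_i(\bx_i^{\nu+};\bx_i^\nu) - \tF_i(\bx_i^\nu;\bx_i^\nu) = \langle g_i^\nu, \Delta\bx_i^\nu\rangle + \tfrac{1}{2}\langle H_i^\nu \Delta\bx_i^\nu, \Delta\bx_i^\nu\rangle + \tfrac{M_i}{6}\norm{\Delta\bx_i^\nu}^3.
\]

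Second, I would invoke the first-order optimality of $\bx_i^{\nu+}$ as the minimizer of the (strongly) convex function $\tF_i(\cdot;\bx_i^\nu)$ over the convex set $\mathcal{K}$. Since $\bx_i^\nu\in\mathcal{K}$ by construction, and since $\nabla_y \tF_i(\bx_i^{\nu+};\bx_i^\nu) = g_i^\nu + H_i^\nu \Delta\bx_i^\nu + \tfrac{M_i}{2}\norm{\Delta\bx_i^\nu}\,\Delta\bx_i^\nu$, testing the variational inequality at the feasible point $\bx_i^\nu$ yields the bound
\[
\langle g_i^\nu, \Delta\bx_i^\nu\rangle \;\leq\; -\langle H_i^\nu \Delta\bx_i^\nu, \Delta\bx_i^\nu\rangle - \tfrac{M_i}{2}\norm{\Delta\bx_i^\nu}^3.
\]
Plugging this bound into the identity from Step~1 collapses the right-hand side to $-\tfrac{1}{2}\langle H_i^\nu\Delta\bx_i^\nu,\Delta\bx_i^\nu\rangle - \tfrac{M_i}{3}\norm{\Delta\bx_i^\nu}^3$. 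The conclusion then follows by observing that, under Assumption~\ref{convex-case}, $\nabla^2 f_i(\bx_i^\nu)\succeq\mu_i\bI$, hence $H_i^\nu\succeq (\mu_i+\tau_i)\bI$ and $\langle H_i^\nu\Delta\bx_i^\nu,\Delta\bx_i^\nu\rangle \geq (\mu_i+\tau_i)\norm{\Delta\bx_i^\nu}^2$, which is exactly the inequality claimed.

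I do not anticipate any significant obstacle here: this is essentially the standard per-step descent inequality for the cubic-regularized Newton method in the convex regime, simply adapted to the constrained setting via a variational inequality rather than a gradient equation. The only mild subtlety is that the surrogate carries the gradient-tracking inexactness $\bepsilon_i^\nu$, but because its influence on $\tF_i$ appears solely through the linear-in-$y$ term, it is absorbed exactly by the first-order optimality condition and does not degrade the descent bound quantitatively. The one sanity-check worth highlighting is that the chosen test point $\bx_i^\nu$ genuinely belongs to $\mathcal{K}$, which is guaranteed because the \alg iterates remain feasible throughout.
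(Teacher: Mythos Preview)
Your proposal is correct and follows essentially the same route as the paper's proof: both write out $\tF_i(\bx_i^{\nu+};\bx_i^\nu)-\tF_i(\bx_i^\nu;\bx_i^\nu)$ explicitly from \eqref{tF_i_rewrite}, apply the first-order optimality condition of $\bx_i^{\nu+}$ (tested at the feasible point $\bx_i^\nu$) to bound the linear term, and then use $\nabla^2 f_i(\bx_i^\nu)+\tau_i\bI\succeq(\mu_i+\tau_i)\bI$. The only difference is cosmetic notation; the logic and inequalities are identical.
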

\begin{proof}
	By the optimality of $\bx_i^{\nu+}$ 
	in \eqref{tF_i_rewrite},  we infer
	\begin{equation}
	\label{x_hat_opt_cubic}
	\left\langle \gradtrack_i^\nu+\left[\nabla^2 f_i(\bx_i^\nu)+\tau_i\bI\right] \Delta\bx_i^\nu, \Delta\bx_i^\nu\right\rangle\leq -\frac{M_i}{2}\norm{\Delta\bx_i^\nu}^3.
	\end{equation}
	Since $\tF_i(\bx_i^\nu;\bx_i^\nu)=F(\bx_i^\nu)$, we have
	\begin{equation*}
	\begin{aligned}
	& \tF_i(\bx_i^{\nu+};\bx_i^\nu)-\tF_i(\bx_i^\nu;\bx_i^\nu)
	\\
	 \overset{\eqref{tF_i_rewrite}}{=}  &\left\langle \gradtrack_i^\nu ,\bx_i^{\nu+}-\bx_i^\nu\right\rangle+\frac{1}{2}\left\langle \left[\nabla^2 f_i(\bx_i^\nu)+\tau_i\bI\right] \Delta\bx_i^\nu,\Delta\bx_i^\nu\right\rangle+\frac{M_i}{6}\norm{\bx_i^{\nu+}-\bx_i^\nu}^3 
	\\
	 \overset{\eqref{x_hat_opt_cubic}}{\leq} & -\frac{1}{2}\left\langle \left[\nabla^2 f_i(\bx_i^\nu)+\tau_i\bI\right] \Delta\bx_i^\nu,\Delta\bx_i^\nu\right\rangle 
	-\frac{M_i}{3}\norm{\Delta\bx_i^\nu}^3
	\\
	 {\leq}  & -\frac{M_i}{3}\norm{\bx_i^{\nu+}-\bx_i^\nu}^3-\frac{\mu_i+\tau_i}{2}\norm{\bx_i^{\nu+}-\bx_i^\nu}^2.
	\end{aligned}
	\end{equation*}
	

\end{proof}

\begin{lemma}\label{lemma_descent_x_plus_II}
	Let Assumptions \ref{convex-case} and \ref{assump:nabla2F_LC}-\ref{assump:homogeneity} hold. Then, any  arbitrary $\epsilon>0$, we have 
	\begin{equation}
	\label{Taylor_exp_Fxi_subt_simplified}
	\begin{aligned}
	F (\bx_i^{\nu+}) -\tF_i (\bx_i^{\nu+};\bx_i^\nu) 
	\leq & -\frac{M_i-\Lhessian}{6} ||\Delta\bx_i^\nu||^3 - \frac{\tau_i-\beta-\epsilon}{2}||\Delta\bx_i^\nu||^2+ \frac{1}{2\epsilon}\norm{\bepsilon_i^\nu}^2.
	\end{aligned}
	\end{equation}
\end{lemma}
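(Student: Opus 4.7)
\medskip
\noindent\textbf{Proof proposal.} The plan is to expand $F(\bx_i^{\nu+})$ via the second-order Taylor polynomial of $F$ at $\bx_i^\nu$, subtract the closed-form expression of $\tF_i(\bx_i^{\nu+};\bx_i^\nu)$ from \eqref{tF_i_rewrite}, and observe that the $\nabla F(\bx_i^\nu)$ linear term and the $\nabla^2 F(\bx_i^\nu)$ quadratic term cancel identically. After cancellation, the difference $F(\bx_i^{\nu+})-\tF_i(\bx_i^{\nu+};\bx_i^\nu)$ decomposes into exactly four pieces:
(i) the Taylor remainder $R \triangleq F(\bx_i^{\nu+})-F(\bx_i^\nu)-\langle \nabla F(\bx_i^\nu),\Delta\bx_i^\nu\rangle-\tfrac12\langle \nabla^2 F(\bx_i^\nu)\Delta\bx_i^\nu,\Delta\bx_i^\nu\rangle$;
(ii) the gradient-tracking cross term $-\langle \bepsilon_i^\nu,\Delta\bx_i^\nu\rangle$;
(iii) the Hessian-dissimilarity quadratic $-\tfrac12\langle \bbeta_i^\nu\Delta\bx_i^\nu,\Delta\bx_i^\nu\rangle-\tfrac{\tau_i}{2}\|\Delta\bx_i^\nu\|^2$;
(iv) the cubic term $-\tfrac{M_i}{6}\|\Delta\bx_i^\nu\|^3$.

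Next I would bound each piece separately. For (i), Lemma~\ref{res_lemma}, eq.~\eqref{res_2}, gives $R\leq \tfrac{\Lhessian}{6}\|\Delta\bx_i^\nu\|^3$, which, combined with (iv), yields the $-\tfrac{M_i-\Lhessian}{6}\|\Delta\bx_i^\nu\|^3$ term. For (ii), a Young inequality with free parameter $\epsilon>0$ produces $-\langle \bepsilon_i^\nu,\Delta\bx_i^\nu\rangle\leq \tfrac{\epsilon}{2}\|\Delta\bx_i^\nu\|^2+\tfrac{1}{2\epsilon}\|\bepsilon_i^\nu\|^2$, which delivers the $\tfrac{1}{2\epsilon}\|\bepsilon_i^\nu\|^2$ summand and absorbs $\tfrac{\epsilon}{2}\|\Delta\bx_i^\nu\|^2$ into the quadratic coefficient. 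For (iii), Assumption~\ref{assump:homogeneity} yields $\|\bbeta_i^\nu\|\leq \beta$, so $\bbeta_i^\nu\succeq -\beta\bI$ and hence $-\tfrac12\langle \bbeta_i^\nu\Delta\bx_i^\nu,\Delta\bx_i^\nu\rangle\leq \tfrac{\beta}{2}\|\Delta\bx_i^\nu\|^2$. Collecting the quadratic contributions $\tfrac{\beta+\epsilon-\tau_i}{2}\|\Delta\bx_i^\nu\|^2$ gives exactly the $-\tfrac{\tau_i-\beta-\epsilon}{2}\|\Delta\bx_i^\nu\|^2$ term claimed in \eqref{Taylor_exp_Fxi_subt_simplified}.

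No step appears genuinely hard: the argument is pure bookkeeping built around three standard tools (second-order Taylor with Hessian-Lipschitz remainder, Young's inequality, and the spectral bound $\|\bbeta_i^\nu\|\leq\beta$). The only subtlety worth flagging is the choice to keep $\epsilon>0$ as a free parameter rather than optimizing it out, so that downstream lemmas (in particular the Lyapunov construction of Step~3) can tune it against the regularization surplus $\tau_i-\beta$ to absorb the gradient-tracking error $\|\bepsilon_i^\nu\|^2$; this is why the bound is stated for \emph{arbitrary} $\epsilon$.
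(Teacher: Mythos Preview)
Your proposal is correct and actually cleaner than the paper's argument. The paper writes \emph{both} $F(\bx_i^{\nu+})$ and $\tF_i(\bx_i^{\nu+};\bx_i^\nu)$ as second-order Taylor expansions with integral remainders $\bH_i^\nu,\widetilde{\bH}_i^\nu$, subtracts to obtain \eqref{Taylor_exp_Fxi_subt}, and then explicitly computes the Hessian of the cubic regularizer (the matrix $G$ in \eqref{nabla2Ftilde_hessian}) in order to bound $\bH_i^\nu-\widetilde{\bH}_i^\nu$. You instead exploit that $\tF_i(\cdot;\bx_i^\nu)$ already has the closed form \eqref{tF_i_rewrite}, so subtracting it from $F(\bx_i^{\nu+})$ leaves precisely the Taylor remainder $R$ of $F$ plus the three correction terms (ii)--(iv); the remainder is then handled in one line by Lemma~\ref{res_lemma}\,\eqref{res_2}, and no Hessian of the cubic term is ever needed. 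The payoff is a shorter, more transparent proof that lands on exactly the same inequality; the paper's route, by contrast, produces a slightly sharper intermediate operator inequality in \eqref{alph_H_exp} (with $G$ rather than $\|\Delta\bx_i^\nu\|\bI$), but this extra sharpness is immediately discarded when the operator is paired with $\Delta\bx_i^\nu$, so nothing is lost by your simplification.
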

\begin{proof}
Taylor's theorem applied to functions $\tF_i(\cdot;\bx_i^\nu)$ and $F(\cdot)$ around $\bx_i^{\nu}$ yields
	\begin{subequations}
		\begin{align}
	\label{Taylor_exp_Fxi}
	F (\bx_i^{\nu+})  = & F(\bx_i^\nu) +\left\langle\nabla F(\bx_i^\nu), \Delta \bx_i^\nu\right\rangle+\Delta \bx_i^{\nu\top} \bH_i^\nu\Delta \bx_i^\nu,
\\
	\label{Taylor_exp_tFxi}
	\tF_i (\bx_i^{\nu+};\bx_i^\nu)  = & \tF_i(\bx_i^\nu; \bx_i^\nu) +\left\langle\nabla \tF_i(\bx_i^\nu;\bx_i^\nu), \Delta \bx_i^\nu\right\rangle+\Delta \bx_i^{\nu\top} \widetilde{\bH}_i^\nu\Delta \bx_i^\nu,
	\end{align}
	\end{subequations}
	where 
	\begin{equation*}
	\begin{aligned}
	\bH_i^\nu = & \int_{0}^1 (1- \theta)\nabla^2 F (\theta \bx_i^{\nu+} + (1 - \theta) \bx_i^\nu) d \theta,
\\
	\widetilde{\bH}_i^\nu = & \int_{0}^1 (1 - \theta)\nabla^2 \tF_i(\theta \bx_i^{\nu+} + (1 - \theta) \bx_i^\nu ; \bx_i^\nu) d \theta.
	\end{aligned}
	\end{equation*}
	Since $\tF_i(\bx_i^\nu; \bx_i^\nu)=F(\bx_i^\nu)$ and $\nabla \tF_i(\bx_i^\nu;\bx_i^\nu)=\nabla F(\bx_i^\nu)+\bepsilon_i^\nu$, subtracting \eqref{Taylor_exp_Fxi}-\eqref{Taylor_exp_tFxi} gives
	\begin{align}
	\label{Taylor_exp_Fxi_subt}
	F (\bx_i^{\nu+}) -\tF_i (\bx_i^{\nu+};\bx_i^\nu) = & \big\langle\big( \bH_i^\nu-\widetilde{\bH}_i^\nu \big)\Delta \bx_i^\nu,\Delta \bx_i^\nu\big\rangle-\left\langle\bepsilon_i^\nu,\Delta \bx_i^\nu\right\rangle.
	\end{align}
	
	Now let us simplify \eqref{Taylor_exp_Fxi_subt}. Note that the hessian of $\tF_i(\cdot;\bx_i^\nu)$ is
	\begin{equation}\label{nabla2Ftilde_hessian}
	\nabla^2\tF_i(\bx_i;\bx_i^\nu)=\nabla^2 F(\bx_i^\nu)+\bbeta_i^\nu+\tau_i\bI+M_iG(\bx_i;\bx_i^\nu),
	\end{equation}
	where
	\begin{equation*}
	G\left(\bx_i;\bx_i^\nu\right)\triangleq \frac{1}{2}\left(\norm{\bx_i-\bx_i^\nu} \bI+\frac{(\bx_i-\bx_i^\nu)(\bx_i-\bx_i^\nu)^\top}{\norm{\bx_i-\bx_i^\nu}}\right).
	\end{equation*}
	Hence,
	\begin{align}
	\begin{split}
	&  \bH_i^\nu - \widetilde{\bH}_i^\nu
	\\
	= &  \int_0^1 (1  - \theta)\nabla^2F \left(\theta   \bx_i^{\nu+} + (1 - \theta  )\bx_i^\nu \right) d \theta
	-\int_0^1 (1 - \theta)\nabla^2 \tF_i \left(\theta \bx_i^{\nu+} + (1 - \theta  )\bx_i^\nu;\bx_i^\nu \right) d \theta
	\\
	\overset{\eqref{nabla2Ftilde_hessian}}{=} &  \int_0^1 (1  - \theta)\nabla^2F \left(\theta   \bx_i^{\nu+} + (1 - \theta  )\bx_i^\nu \right) d \theta
	-\int_0^1 (1 - \theta)\left[\nabla^2 F (\bx_i^\nu)+\bbeta_i^\nu\right] d \theta -\int_0^1 (1 - \theta)\tau_i\bI d \theta
	\\
	&-\int_0^1 (1 - \theta)M_i\theta G(\bx_i^{\nu+};\bx_i^\nu) d \theta
	\\
	= &  \int_0^1 (1  - \theta)\left(\nabla^2F \left(\theta   \bx_i^{\nu+} + (1 - \theta  )\bx_i^\nu \right)-\nabla^2F(\bx_i^\nu) \right)d \theta
	\\
	&-\int_0^1 (1 - \theta)\bbeta_i^\nu d \theta-\int_0^1 (1 - \theta)\tau_i\bI d \theta-\int_0^1 (1 - \theta)M_i\theta G(\bx_i^{\nu+};\bx_i^\nu) d \theta
	\\
	\overset{{\rm(a)}}{\preceq} &  \int_0^1 (1  - \theta)\Lhessian\theta||\bx_i^{\nu+}-\bx_i^\nu||  \bI  d \theta
	\\
	&-\int_0^1 (1 - \theta)\bbeta_i^\nu d \theta-\int_0^1 (1 - \theta)\tau_i\bI d \theta-\int_0^1 (1 - \theta)M_i\theta G(\bx_i^{\nu+};\bx_i^\nu) d \theta
	\\
	=&-\frac{M_i}{6} G(\bx_i^{\nu+};\bx_i^\nu)+\frac{\Lhessian}{6} ||\bx_i^{\nu+}-\bx_i^\nu|| \bI   -\frac{\tau_i}{2}\bI-\frac{\bbeta_i^\nu}{2}
	\end{split}
	\label{alph_H_exp}
	\end{align}
	where (a) holds since $\nabla^2F$ is $\Lhessian$-Lipschitz continuous.
	Combining  \eqref{Taylor_exp_Fxi_subt} and  \eqref{alph_H_exp}, we conclude
	\begin{equation*}
	\begin{aligned}
	F (\bx_i^{\nu+}) -\tF_i (\bx_i^{\nu+};\bx_i^\nu) \leq & -\frac{M_i-\Lhessian}{6} ||\Delta\bx_i^\nu||^3   - \frac{\tau_i}{2}||\Delta\bx_i^\nu||^2-\frac{1}{2}\left\langle \bbeta_i^\nu\Delta \bx_i^\nu,\Delta \bx_i^\nu\right\rangle-\left\langle\bepsilon_i^\nu,\Delta \bx_i^\nu\right\rangle
	\\
	\leq & -\frac{M_i-\Lhessian}{6} ||\Delta\bx_i^\nu||^3 - \frac{\tau_i-\beta-\epsilon}{2}||\Delta\bx_i^\nu||^2+ \frac{1}{2\epsilon}\norm{\bepsilon_i^\nu}^2,
	\end{aligned}
	\end{equation*}
for arbitrary $\epsilon>0$, where the last inequality is due to the Cauchy-Schwarz inequality and   $|\left\langle \bbeta_i^\nu\Delta \bx_i^\nu,\Delta \bx_i^\nu\right\rangle| \leq \beta ||\Delta\bx_i^\nu||^2$, which is a consequence of  \eqref{delta_i_def} and Assumption \ref{assump:homogeneity}.
\end{proof}

We are now in a position to prove the main result of this subsection.

Combining \eqref{eq:sufficient_descent_dampedNewton_fi} in Lemma \ref{lemma_descent_x_plus_II} with \eqref{Taylor_exp_Fxi_subt_simplified} in Lemma \ref{lemma:descent_cubic}, and using $\tF_i (\bx_i^{\nu};\bx_i^\nu)=F(\bx_i^\nu)$, yields
\begin{equation*}
\begin{aligned}
F (\bx_i^{\nu+}) -F (\bx_i^{\nu})
\leq & -\left(\frac{M_i}{2}-\frac{L}{6}\right) ||\Delta\bx_i^\nu||^3 - \left(\frac{\mu_i}{2}+\tau_i-\frac{\beta+\epsilon}{2}\right)||\Delta\bx_i^\nu||^2+\frac{1}{2\epsilon}\norm{\bepsilon_i^\nu}^2.
\end{aligned}
\end{equation*}
Since under either   Assumption \ref{convex-case} or Assumption \ref{sconvex-case} combined with Assumption \ref{assump:homogeneity} it holds that $\mu_i\geq \max\left\{0,\mu-\beta\right\}$, we obtain
\begin{align}
\label{Descent_Final2}
F (\bx_i^{\nu+}) -F (\bx_i^{\nu}) 
\leq & -\left(\frac{M_i}{2}-\frac{L}{6}\right) ||\Delta\bx_i^\nu||^3 - \left(\frac{\max(0,\mu-\beta)}{2}+\tau_i-\frac{\beta+\epsilon}{2}\right)||\Delta\bx_i^\nu||^2+\frac{1}{2\epsilon}\norm{\bepsilon_i^\nu}^2.
\end{align}

Denoting  $p^{\nu+}\triangleq (1/m)\sum_{i=1}^m \left\{F(\bx_i^{\nu+})-F(\widehat{\bx})\right\}$, we derive a simple relation with $p^{\nu+1}$: 
\begin{equation}\label{pnuplus_pnu}
\begin{aligned}
p^{\nu+1}+F(\widehat{\bx})&=\frac{1}{m}\sum_{i=1}^mF\Big(\bx^{\nu+1}_i\Big)\overset{\eqref{DiRegINA_pseudocode}}{=}\frac{1}{m}\sum_{i=1}^mF\Big(\sum_{j=1}^m (W_K)_{i,j}\, \bx_j^{\nu+}\Big) \\
&\overset{(a)}{\leq} \frac{1}{m}\sum_{i,j=1}^m(W_K)_{i,j}F\Big(\bx^{\nu+}_j\Big)
\overset{(b)}{=}\frac{1}{m}\sum_{j=1}^mF\Big(\bx^{\nu+}_j\Big)=p^{\nu+}+F(\widehat{\bx}),
\end{aligned}
\end{equation}
where (a) is due to convexity of $F$ (cf.~Assumptions \ref{convex-case} and \ref{sconvex-case}) and  $\sum_{j=1}^m (W_K)_{ij}=1$ (cf.~Assumption \ref{assump:weight}); and in (b) we used  $\sum_{i=1}^m (W_K)_{ij}=1$ (cf.~Assumption \ref{assump:weight}).
Summing \eqref{Descent_Final2} over $i$ while setting   $\epsilon=\beta$, $\tau_i=2\beta$ and $M_i\geq \Lhessian/3$ (recall that it is assumed   $M_i\geq \Lhessian$), gives the desired per-iteration decrease of $p^{\nu}$ when $\norm{\bepsilon^\nu}$ is sufficiently small:
\begin{equation}
\label{Descent_Final3}
\begin{aligned}
p^{\nu+1}-p^\nu\overset{\eqref{pnuplus_pnu}}{\leq} p^{\nu+}-p^\nu
\leq  & - \frac{\max(\mu,\beta)}{2}\cdot\frac{1}{m}\sum_{i=1}^m\|\Delta\bx_i^\nu\|^2+\frac{1}{2m\beta}\norm{\bepsilon^\nu}^2.
\end{aligned}
\end{equation}

\subsection{Network error bounds}\label{sec:net_err_bounds}
The goal of this subsection is to prove an upper bound for $\norm{\bepsilon^\nu}$ in terms of the number of communication steps $K$, implying that  this error   can be made sufficiently small by  choosing    sufficiently large $K$.
For notation simplicity and without loss of generality,  we assume $d=1$;   the case $d>1$ follows  trivially.  

Recall that $\bx^\nu\triangleq (\bx_i^\nu)_{i=1}^m$,  $\gradtrack^\nu\triangleq (\gradtrack_i^\nu)_{i=1}^m$, $\bJ \triangleq   (1/m) 1_m 1_m^\top$, and 
\begin{equation*}
\bx_\bot^{\nu}\triangleq (\bI-\bJ)\bx^{\nu}= \bx^\nu - 1_m \frac{1_m^\top \bx^\nu}{m},\quad \gradtrack_\bot^{\nu}\triangleq (\bI-\bJ)\gradtrack^\nu = \gradtrack^\nu - 1_m \frac{1_m^\top \gradtrack^\nu}{m} ,\quad \Delta\bx^\nu\triangleq (\Delta\bx_i^\nu)_{i=1}^m.
\end{equation*}
Note that the vectors  $\bx_\bot^{\nu}$ and $\gradtrack_\bot^{\nu}$ are the consensus and gradient-tracking errors; 
  when $\|\bx_\bot^{\nu}\|=\|\gradtrack_\bot^{\nu}\|=0$, we have  $\bx_i^\nu=\bx_j^\nu$ and $\gradtrack^\nu_i=\gradtrack^\nu_j$ for all $i,j=1,\ldots,m$. The following holds for $\bx_\bot^{\nu}$ and $\gradtrack_\bot^{\nu}$.
\begin{lemma}[Proposition 3.5 in \citet{sun2019distributed}]
	\label{cons_track_err_prop}
	Under Assumptions \ref{convex-case} and  \ref{assump:network}-\ref{assump:weight}, for all $\nu\geq 0$,
	\begin{subequations}
		\begin{align}
		\| \bx_\bot^{\nu + 1} \| & \leq \rho_K \| \bx_\bot^\nu \| +  \rho_K \| \Delta \bx^\nu\|,\label{eq:err_x_bound_0}
		\\
		\| \gradtrack_\bot^{\nu +1}\| & \leq \rho_K \| \gradtrack_\bot^\nu\| + 2\Lgrad_{\max} \rho_K \| \bx_\bot^\nu\| +   \Lgrad_{\max} \rho_K\|\Delta \bx^\nu\|,
		\label{eq:err_s_bound_0}
		\end{align}
	\end{subequations}
	where $\rho_K  = \lambda_{\max}(W_K - \bJ) < 1$.  Note that in case of $K$-rounds of communications using a reference matrix $\overline{W}$ with $\rho  \triangleq  \lambda_{\max}(\overline{W} - \bJ) < 1$, we have $\rho_K=\rho^K$; if  Chebyshev acceleration is employed, we  have $\rho_K=\left(1-\sqrt{1-\rho}\right)^K$. 
\end{lemma}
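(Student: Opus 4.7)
The plan is to exploit two facts: $W_K$ is symmetric and doubly stochastic, so $W_K\bJ=\bJ W_K=\bJ$, and $\norm{(W_K-\bJ)v}\le \rho_K\norm{v}$ for every $v\in\mathbb{R}^m$. Both bounds then follow by applying $\bI-\bJ$ to the corresponding update rule and recognizing that the residual operator is $W_K-\bJ$.

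For \eqref{eq:err_x_bound_0}, I would write the consensus step as $\bx^{\nu+1}=W_K\bx^{\nu+}=W_K(\bx^\nu+\Delta\bx^\nu)$. Left-multiplying by $\bI-\bJ$ and using $(\bI-\bJ)W_K=W_K-\bJ$ together with $(W_K-\bJ)\bJ=0$ gives
\[
\bx_\bot^{\nu+1}=(W_K-\bJ)\bx_\bot^\nu+(W_K-\bJ)\Delta\bx^\nu,
\]
so the triangle inequality and the spectral bound produce \eqref{eq:err_x_bound_0}.

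For \eqref{eq:err_s_bound_0}, I would start from the gradient-tracking update $\gradtrack^{\nu+1}=W_K\bigl(\gradtrack^\nu+\nabla f(\bx^{\nu+1})-\nabla f(\bx^\nu)\bigr)$, with the shorthand $\nabla f(\bx)\triangleq(\nabla f_i(\bx_i))_{i=1}^m$. The same manipulation yields
\[
\gradtrack_\bot^{\nu+1}=(W_K-\bJ)\gradtrack_\bot^\nu+(W_K-\bJ)\bigl[\nabla f(\bx^{\nu+1})-\nabla f(\bx^\nu)\bigr],
\]
and Lipschitz continuity of each $\nabla f_i$ (with constant at most $\Lgrad_{\max}$) gives $\norm{\nabla f(\bx^{\nu+1})-\nabla f(\bx^\nu)}\le \Lgrad_{\max}\norm{\bx^{\nu+1}-\bx^\nu}$. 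It then remains to relate $\norm{\bx^{\nu+1}-\bx^\nu}$ to $\norm{\bx_\bot^\nu}$ and $\norm{\Delta\bx^\nu}$.

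This last step is the one requiring care. I would decompose $\bx^{\nu+1}-\bx^\nu=(W_K-\bI)\bx^\nu+W_K\Delta\bx^\nu$, note that $W_K-\bI$ vanishes on the range of $\bJ$ so $(W_K-\bI)\bx^\nu=(W_K-\bJ)\bx_\bot^\nu-\bx_\bot^\nu$, and then use $\norm{W_K\Delta\bx^\nu}\le\norm{\Delta\bx^\nu}$ to obtain $\norm{\bx^{\nu+1}-\bx^\nu}\le 2\norm{\bx_\bot^\nu}+\norm{\Delta\bx^\nu}$. Substituting and pulling the outer $\rho_K$ factor from $(W_K-\bJ)$ through the gradient-difference term yields $2\Lgrad_{\max}\rho_K\norm{\bx_\bot^\nu}+\Lgrad_{\max}\rho_K\norm{\Delta\bx^\nu}$, giving \eqref{eq:err_s_bound_0}. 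The only subtlety is this bookkeeping: each gradient-difference contribution must carry the $\rho_K$ factor outside, since only then do the two inequalities combine into a strict contraction in the Lyapunov argument of Step 3.
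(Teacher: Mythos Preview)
Your argument is correct: the use of $(\bI-\bJ)W_K=W_K-\bJ$ to extract a $\rho_K$ contraction factor, together with the decomposition $(W_K-\bI)\bx^\nu=(W_K-\bJ)\bx_\bot^\nu-\bx_\bot^\nu$ to bound $\norm{\bx^{\nu+1}-\bx^\nu}\le 2\norm{\bx_\bot^\nu}+\norm{\Delta\bx^\nu}$, is exactly the standard route and every step checks out. The paper itself does not prove this lemma---it simply cites it as Proposition~3.5 in \citet{sun2019distributed}---so there is no in-paper proof to compare against; your derivation is consistent with the standard one in that reference.
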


Now let us bound $\bepsilon_i^\nu$ defined in \eqref{delta_i_def}. Note that by column-stochasticity of $\bW_K$ and initialization rule $s_i^0=\nabla f_i(\bx_i^0)$, it can be trivially concluded from \eqref{grad_track_update} that 
\begin{equation*}
1_m^\top\gradtrack^\nu=\sum_{j=1}^m\nabla f_j(\bx_j^\nu).
\end{equation*}
Hence,
\begin{equation}\label{delta_i_upperbound}
\begin{aligned}
\norm{\bepsilon_i^\nu}^2= & \Big\|\gradtrack_i^\nu-\frac{1_m^\top\gradtrack^\nu}{m}+\frac{1}{m}\sum_{j=1}^m\nabla f_j(\bx_j^\nu)-\nabla F(\bx_i^\nu)\Big\|^2
\\
\overset{(a)}{\leq}&2 \Big\|\gradtrack_i^\nu-\frac{1_m^\top\gradtrack^\nu}{m}\Big\|^2+\frac{2\Lgrad_{\max}^2}{m}\left(\sum_{j=1}^m\Big\|\bx_i^\nu\pm\frac{1_m^\top\bx^\nu}{m}-\bx_j^\nu\Big\|^2\right)
\\
\leq &2 \Big\|\gradtrack_i^\nu-\frac{1_m^\top\gradtrack^\nu}{m}\Big\|^2+\frac{4\Lgrad_{\max}^2}{m}\left(\| \bx_\bot^\nu \|^2+m~\Big\|\bx_i^\nu-\frac{1_m^\top\bx^\nu}{m}\Big\|^2\right),
\end{aligned}
\end{equation}
where  (a) is due to  $\Lgrad_{\max}$-Lipschitz continuity of $\nabla f_i$.   Summing \eqref{delta_i_upperbound} over $i$ and taking the square root, gives
\begin{equation}
\label{delta_upperbnd}
\norm{\bepsilon^\nu}\leq \tilde{\delta}^\nu\triangleq \sqrt{2} \left(\|\gradtrack_\bot^{\nu} \|+2\Lgrad_{\max}\| \bx_\bot^\nu \|\right).
\end{equation}
It remains to bound $\tilde{\delta}^\nu$ defined above:
\begin{equation*}
\begin{aligned}
\tilde{\delta}^{\nu+1} = \sqrt{2} \left(\|\gradtrack_\bot^{\nu+1} \|+2\Lgrad_{\max}\| \bx_\bot^{\nu+1} \|\right)
\overset{(a)}{\leq} & 
\rho_K \sqrt{2}\left( \| \gradtrack_\bot^\nu\| + 4 \Lgrad_{\max} \| \bx_\bot^\nu\|\right) + 3\sqrt{2} \Lgrad_{\max} \rho_K\|\Delta \bx^\nu\| 
\\
\leq &
2\rho_K \tilde{\delta}^\nu+ 3\sqrt{2} \Lgrad_{\max} \rho_K\|\Delta \bx^\nu\|,
\end{aligned}
\end{equation*}
where in (a) we used   Lemma \ref{cons_track_err_prop} [cf. \eqref{eq:err_x_bound_0}-\eqref{eq:err_s_bound_0}].
Consequently,
\begin{equation}\label{delta_sqr_decay_bnd}
\begin{aligned}
(\tilde{\delta}^{\nu+1})^2 
\leq &
8\rho_K^2 (\tilde{\delta}^\nu)^2+ 36 \Lgrad_{\max}^2 \rho_K^2\|\Delta \bx^\nu\|^2.
\end{aligned}
\end{equation}
Since $\rho_K$ decreases as $K$ increases, the latter inequality provides a leverage to make $\tilde{\delta}^{\nu+1}$ sufficiently small by choosing $K$ sufficiently large. 

\subsection{Asymptotic convergence}\label{sec:asymptotic_convergence_Step3}
We combine the results of the previous two subsections to finally prove Theorem \ref{thm:asymptotic_conv}. 
Combining 
\eqref{Descent_Final3} and \eqref{delta_upperbnd}, we obtain
\begin{align}
\label{Descent_Final3_recalled}
p^{\nu+1}
\leq & \;\; p^\nu- \frac{\max(\beta,\mu)}{2m}\|\Delta\bx^\nu\|^2+\frac{1}{2m\beta}(\tilde{\delta}^\nu)^2.
\end{align}


%

Next, we combine \eqref{delta_sqr_decay_bnd} with \eqref{Descent_Final3_recalled} multiplied by some weight $w>0$ to obtain
\begin{align}
\label{synergy_net_opt_quadraticphase}
wp^{\nu+1}+(\tilde{\delta}^{\nu+1})^2 
\leq & wp^\nu+\left(8\rho_K^2 +\frac{w}{2m\beta}\right)(\tilde{\delta}^\nu)^2- w\left(\frac{\max(\beta,\mu)}{2m}-\frac{36}{w} \Lgrad_{\max}^2 \rho_K^2\right)||\Delta\bx^\nu||^2.
\end{align}

Let  $w=c_w\beta$, for   some $0<c_w\leq 1$.  Then, if
\begin{equation}\label{w_cond1_QR}
8\rho_K^2 +\frac{w}{2m\beta}\leq c_w,\quad \frac{\max(\beta,\mu)}{4m}\geq \frac{36}{w}\Lgrad_{\max}^2 \rho_K^2,
\end{equation}
  \eqref{synergy_net_opt_quadraticphase} becomes
\begin{align}
\label{synergy_net_opt_descent}
wp^{\nu+1}+(\tilde{\delta}^{\nu+1})^2 
\leq & wp^\nu+c_w(\tilde{\delta}^\nu)^2-  \frac{w\max(\beta,\mu)}{4m} ||\Delta\bx^\nu||^2.
\end{align}

Note that by Lemma \ref{cons_track_err_prop}, condition \eqref{w_cond1_QR} holds if
\begin{equation}\label{K_cond_asymptotic}
K\geq \frac{1}{\sqrt{1-\rho}}\log\left(\max\left\{\frac{2\sqrt{2}}{\sqrt{c_w(1-\frac{1}{2m})}},\frac{12\sqrt{m}\Lgrad_{\max}}{\sqrt{c_w\beta\max(\beta,\mu)}}\right\}\right).
\end{equation}
Denoting 
\begin{equation}
\xi^\nu\triangleq wp^{\nu}+(\tilde{\delta}^{\nu})^2,
\end{equation}
let us show that $\xi^\nu\rightarrow 0$ as $\nu \to \infty$, which implies  that the optimization error $p^{\nu}$ and network error $\tilde{\delta}^{\nu}$ asymptotically vanish. 
Since $\xi^\nu\geq 0$, inequality \eqref{synergy_net_opt_descent} implies $\sum_{\nu=0}^\infty||\Delta\bx^\nu||^2<\infty$. Thus, $||\Delta\bx^\nu||\rightarrow 0$; and $||\Delta\bx^\nu||\leq D_1$, for some $D_1>0$ and all $\nu\geq 0$. 
Further,  $\{\xi^\nu\}_\nu$ is non-increasing and $||\xi^\nu||\leq D_2$ for some $D_2>0$ and all $\nu\geq 0$. Thus, $p^\nu\leq D_2/w$, which together with  Assumption \ref{convex-case}(iv) and Assumption \ref{sconvex-case}, also implies   $||\bx_i^\nu||\leq D_3$ for some $D_3$, all $i$ and $\nu\geq 0$. Using $||\Delta\bx^\nu||\rightarrow 0$ and \eqref{delta_sqr_decay_bnd}, if $8\rho_K^2<1$ (which holds under \eqref{K_cond_asymptotic}), we obtain that $\tilde{\delta}^{\nu}\rightarrow 0$. Finally, it remains to show that $p^\nu\rightarrow 0$. Using optimality condition of $\hxnu{\nu}_i$ defined in \eqref{def:x_nuplus}, we get
\begin{equation*}
\left\langle \nabla F(\bx_i^\nu)+\bepsilon_i^\nu+\left[\nabla^2F(\bx_i^\nu)+\bbeta_i^\nu+\tau_i\bI\right]\Delta\bx_i^\nu+\frac{M_i}{2}||\Delta\bx_i^\nu||\Delta\bx_i^\nu, \widehat{\bx}-\hxnu{\nu}_i\right\rangle \geq 0.
\end{equation*}
Rearranging terms gives
\begin{equation}
\begin{aligned}
&\left\langle \nabla F(\bx_i^\nu)+\nabla^2F(\bx_i^\nu)\Delta\bx_i^\nu,\widehat{\bx}-\hxnu{\nu}_i\right\rangle
\\ 
\geq & \Big\langle \frac{M_i}{2}||\Delta\bx_i^\nu||\Delta\bx_i^\nu,\hxnu{\nu}_i- \widehat{\bx}\Big\rangle
+\left\langle \bepsilon_i^\nu,\hxnu{\nu}_i- \widehat{\bx}\right\rangle +\left\langle \tilde{\bbeta}_i^\nu\Delta\bx_i^\nu,\hxnu{\nu}_i- \widehat{\bx}\right\rangle,
\end{aligned}
\label{x_hat_opt_cond}
\end{equation}
where $\tilde{\bbeta}_i^\nu\triangleq \bbeta_i^\nu+\tau_i I$. By convexity of $F$, we can write
\begin{equation}
\begin{aligned}
0\geq &F(\widehat{\bx})-F(\hxnu{\nu}_i)
\\
\geq & \left\langle \nabla F(\hxnu{\nu}_i),\widehat{\bx}-\hxnu{\nu}_i\right\rangle
\\
=& \left\langle \nabla F(\hxnu{\nu}_i)-\nabla F(\bx_i^\nu)-\nabla^2F(\bx_i^\nu)\Delta\bx_i^\nu,\widehat{\bx}-\hxnu{\nu}_i\right\rangle
+\left\langle \nabla F(\bx_i^\nu)+\nabla^2F(\bx_i^\nu)\Delta\bx_i^\nu,\widehat{\bx}-\hxnu{\nu}_i\right\rangle
\\
\overset{\eqref{x_hat_opt_cond}}{\geq}& \left\langle \nabla F(\hxnu{\nu}_i)-\nabla F(\bx_i^\nu)-\nabla^2F(\bx_i^\nu)\Delta\bx_i^\nu,\widehat{\bx}-\hxnu{\nu}_i\right\rangle+\Big\langle \frac{M_i}{2}||\Delta\bx_i^\nu||\Delta\bx_i^\nu,\hxnu{\nu}_i- \widehat{\bx}\Big\rangle
\\
&
+\left\langle \bepsilon_i^\nu,\hxnu{\nu}_i- \widehat{\bx}\right\rangle +\left\langle \tilde{\bbeta}_i^\nu\Delta\bx_i^\nu,\hxnu{\nu}_i- \widehat{\bx}\right\rangle.
\end{aligned}
\label{p_to_zero_bound}
\end{equation}
Using Lipschitz continuity of $\nabla F$, $||\Delta\bx_i^\nu||\rightarrow 0$ and  $\tilde{\delta}^{\nu}\rightarrow 0$ (hence $||\delta_i^\nu||\rightarrow 0$), we conclude that the RHS of \eqref{p_to_zero_bound} asymptotically vanishes, for all $i=1,\ldots,m$. Hence, $F(\hxnu{\nu}_i)-F(\widehat{\bx})\rightarrow 0$, for all $i=1,\ldots,m$. Using \eqref{pnuplus_pnu}, we finally obtain   $p^\nu\rightarrow 0$.  

Finally, by \eqref{delta_upperbnd} and   $\tilde{\delta}^{\nu}\rightarrow 0$, we obtain   $\|\gradtrack_\bot^{\nu} \| \to 0$ and $\| \bx_\bot^\nu \| \to 0$, implying   $||\bx_i^\nu-\bx_j^\nu||\rightarrow 0$, for all $i,j=1,\ldots,m$ as $\nu \to \infty$.
This concludes  the proof of Theorem \ref{thm:asymptotic_conv}.

\begin{remark}
Note that \eqref{delta_sqr_decay_bnd} implies
\begin{equation}
\label{delta_nu_decaying_bound}
(\tilde{\delta}^{\nu})^2
\leq  \rho_K^2\bar{D}_\delta,\quad \bar{D}_\delta\triangleq 8D_2+36Q_{\max}^2D_1^2 ,\quad \forall \nu\geq 0,
\end{equation}
since $(\tilde{\delta}^{\nu})^2\leq \xi^\nu\leq D_2$ and $||\Delta\bx^\nu||\leq D_1$, for all $\nu\geq 0$.
\end{remark}

\section{Proof of Theorem \ref{thm:cvx_case}}
\label{sec:cvx_case_proof}
We first prove a detailed ``region-based'' complexity of \alg (cf. Theorem \ref{Rate_cvx_THM}, Subsec. \ref{sec:cvx_complexity_beta_leq_1})  for the prevalent scenario $0<\beta\leq 1$ [recall that typically $\beta=\mathcal{O}(1/\sqrt{n})$]. For the sake of completeness, the case $\beta\geq 1$ is studied in  Theorem \ref{Rate_cvx_THM_2} (cf.  Subsec. \ref{sec:cvx_complexity_beta_geq_1}). Building on Theorems \ref{Rate_cvx_THM}-\ref{Rate_cvx_THM_2}, we can finally   prove the main result,  Theorem \ref{thm:cvx_case} (cf.  Subsec. \ref{proof_of_thm_appendix:cvx_case}).  

\subsection{Complexity Analysis when   $0<\beta\leq 1$}\label{sec:cvx_complexity_beta_leq_1}
\begin{theorem}[$0<\beta\leq 1$ and $\Lhessian>0$]\label{Rate_cvx_THM}
Let  Assumptions \ref{convex-case} and \ref{assump:nabla2F_LC}
-\ref{assump:network} hold along with   $0<\beta\leq 1$. Let $M_i= L>0$, $\tau_i=2\beta$, and recall the definition of $D>0$ implying $||\bx_i^0-\widehat{x}||\leq D$, for all $i=1,\ldots,m$. W.l.o.g. assume $D\geq 2/\Lhessian$. Pick an accuracy $\varepsilon>0$. If a reference matrix $\overline{W}$ satisfying Assumption \ref{assump:weight} is used   in steps \eqref{DiRegINA_pseudocode}-\eqref{grad_track_update},  with  $\rho  \triangleq  \lambda_{\max}(\overline{W} - \bJ) < 1$ and $K=\widetilde{\mathcal{O}}(\log(1/\varepsilon)/\sqrt{1-\rho})$ (the explicit expression of $K$ can be found in   \eqref{K_cond_cvx}),  then the sequence $\{p^\nu\}$  generated by \alg satisfies the following: 
\begin{itemize}
	\item[(a)] if $p^\nu  \geq   2\Lhessian D^3$, 
	\begin{equation*}
	p^{\nu+1} \leq \frac{5}{6}~p^\nu,
	\end{equation*}
	\item[(b)] if $\beta^2 \cdot (2\Lhessian D^3)\leq p^\nu 
	\leq 2\Lhessian D^3$,
	\begin{equation*}
	p^\nu\leq \frac{244\cdot \Lhessian D^3}{\nu^2},
	\end{equation*}
	\item[(c)] if $\varepsilon\leq p^\nu 
	\leq \beta^2 \cdot (2\Lhessian D^3)$,   
	\begin{equation*}
	p^\nu\leq 24^2\cdot (\Lhessian D^3)^2\cdot \frac{\beta^2}{\varepsilon}\cdot \frac{1}{\nu^2}.
	\end{equation*}
\end{itemize}
\end{theorem}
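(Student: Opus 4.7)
The plan is to adapt the canonical cubic-regularized Newton analysis to the distributed setting: I will strengthen the per-iteration descent by keeping the $\|\Delta\bx_i^\nu\|^3$ term that was dropped in \eqref{Descent_Final3}, pair it with a convexity-based upper bound on $p^{\nu+1}$ obtained from the optimality condition of $\hxnu{\nu}_i$, and then analyze the resulting master recursion in each of the three phases of $p^\nu$.

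First, revisiting \eqref{Descent_Final2} with $M_i=L$, $\tau_i=2\beta$, $\epsilon=\beta$ but \emph{without} dropping the cubic term, averaging over $i$, and invoking \eqref{pnuplus_pnu}, I obtain
\begin{equation*}
p^{\nu+1}-p^\nu \;\leq\; -\tfrac{L}{3m}\sum_i\|\Delta\bx_i^\nu\|^3 - \tfrac{\beta}{m}\sum_i\|\Delta\bx_i^\nu\|^2 + \tfrac{1}{2m\beta}\|\bepsilon^\nu\|^2,
\end{equation*}
and by Jensen's inequality the cubic mean is at least the $3/2$-power of the quadratic one. Next, rearranging \eqref{p_to_zero_bound}, using \eqref{res1}, $\|\bbeta_i^\nu\|\leq\beta$, $\|\tilde{\bbeta}_i^\nu\|\leq 3\beta$, Cauchy--Schwarz, and the bound $\|\hxnu{\nu}_i-\widehat{\bx}\|\leq D$ (to be verified inductively from $\|\bx_i^0-\widehat{\bx}\|\leq D$ and the descent itself), I will derive
\begin{equation*}
p^{\nu+1} \;\leq\; C_1\, LD\cdot\tfrac{1}{m}\sum_i\|\Delta\bx_i^\nu\|^2 + C_2\,\beta D\sqrt{\tfrac{1}{m}\sum_i\|\Delta\bx_i^\nu\|^2} + C_3\, D\|\bepsilon^\nu\|/\sqrt{m}.
\end{equation*}

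Combining these two inequalities and expressing the quadratic mean $\tfrac{1}{m}\sum_i\|\Delta\bx_i^\nu\|^2$ as a function of $p^{\nu+1}$ yields a master recursion of the form $p^\nu - p^{\nu+1} \geq c\,(p^{\nu+1})^{3/2}/\sqrt{LD^3} - \eta_\nu$, where $\eta_\nu = O(\|\bepsilon^\nu\|^2/(m\beta)) = O(\rho_K^2\bar D_\delta)$ by \eqref{delta_nu_decaying_bound}. In regime (a), $p^\nu\geq 2LD^3$ forces the cubic term to exceed $p^\nu/3$ whenever $p^{\nu+1}>(5/6)p^\nu$, so a contradiction argument yields $p^{\nu+1}\leq(5/6)p^\nu$ once $K$ is chosen to make $\eta_\nu\leq p^\nu/6$. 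In regime (b), $p^\nu\leq 2LD^3$ keeps the inversion of Step 2 valid in its $LD$-branch, and the classical reciprocal-square-root argument applied to the recursion yields $p^\nu\leq 244\,LD^3/\nu^2$. In regime (c), $p^\nu\leq\beta^2\cdot 2LD^3$ places the Step-2 bound in the regime where its $\beta D\sqrt{\,\cdot\,}$ branch dominates, degrading the effective recursion; since only $p^\nu\geq\varepsilon$ is required, $\eta_\nu$ can be pinned at order $\varepsilon$ by choosing $K=\widetilde{\mathcal O}(\log(1/\varepsilon)/\sqrt{1-\rho})$ (the explicit expression being \eqref{K_cond_cvx}), and solving the modified recursion produces the stated $24^2(LD^3)^2\beta^2/(\varepsilon\nu^2)$.

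The hardest part will be the bookkeeping in this final step: pinpointing the exact phase boundaries $2LD^3$ and $\beta^2\cdot 2LD^3$ and reproducing the explicit constants $5/6$, $244$, $24^2$ requires a careful accounting of the interplay among the cubic descent $L\|\Delta\bx\|^3/3$, the quadratic regularization $\beta\|\Delta\bx\|^2$, and the two branches of the Step-2 upper bound, while simultaneously keeping the additive network error $\eta_\nu$ below the current target level. A secondary difficulty is verifying the uniform bound $\|\hxnu{\nu}_i-\widehat{\bx}\|\leq D$ along the trajectory---because the consensus error feeds back into $\bepsilon^\nu$, this likely demands an inductive Lyapunov argument analogous to Section~\ref{sec:asymptotic_convergence_Step3}.
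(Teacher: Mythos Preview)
Your route differs substantially from the paper's. You propose to combine the sharpened descent inequality (controlling $p^\nu-p^{\nu+1}$ from below by $\tfrac{L}{3}S^{3/2}+\beta S$, where $S=\tfrac1m\sum_i\|\Delta\bx_i^\nu\|^2$) with an optimality-condition upper bound ($p^{\nu+1}\lesssim LD\,S+\beta D\sqrt S+\text{error}$), and then eliminate $S$. The paper instead uses a direct \emph{model line-search} argument: since $\bx_i^{\nu+}$ minimizes $\tF_i(\cdot;\bx_i^\nu)$, combining \eqref{Taylor_exp_Fxi_subt_simplified} with \eqref{F_tilde_upperbound} gives
\[
F(\bx_i^{\nu+})-F(\widehat{\bx})\;\leq\;\min_{\by\in\mathcal K}\Big\{F(\by)-F(\widehat{\bx})+2\beta\|\by-\bx_i^\nu\|^2+\tfrac{L}{3}\|\by-\bx_i^\nu\|^3\Big\}+\tfrac{1}{\beta}\|\bepsilon_i^\nu\|^2;
\]
restricting to $\by=\alpha\widehat{\bx}+(1-\alpha)\bx_i^\nu$, using convexity and $\|\bx_i^\nu-\widehat{\bx}\|\leq D$, and absorbing the $\alpha^2$-term through the constraint $\alpha\leq p^\nu/(6\beta C_1)$ yields the single recursion $p^{\nu+1}\leq\min_{\alpha}\{(1-\alpha/2)p^\nu+C_1\alpha^3\}+\tfrac{1}{m\beta}\|\bepsilon^\nu\|^2$. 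The three phases (a)--(c) then arise \emph{immediately} from whether the constrained minimizer is $\alpha^\ast=1$, $\sqrt{p^\nu/(6C_1)}$, or $p^\nu/(6\beta C_1)$, and the constants $5/6$, $244$, $24^2$ fall out explicitly.

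Your approach can in principle be pushed through, but it is more fragile. Inverting your Step-2 bound means solving a quadratic in $\sqrt S$, and your assertion that ``in regime~(c) the $\beta D\sqrt{\cdot}$ branch dominates'' is not automatic: which branch of $LD\,S+\beta D\sqrt S$ is active depends on $S$ itself, not directly on the size of $p^{\nu+1}$, so the phase boundaries you recover would only match $2LD^3$ and $\beta^2\cdot 2LD^3$ up to constant factors, and the stated constants would not drop out cleanly. You also rely on $\|\bx_i^{\nu+}-\widehat{\bx}\|\leq D$, which is harder to propagate than the $\|\bx_i^\nu-\widehat{\bx}\|\leq D$ that the paper actually uses. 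Your strategy is in fact what the paper adopts for the \emph{strongly} convex case (Lemma~\ref{Deltax_lowerbound_lemm}), where strong convexity lets one close the loop between $S$ and $p^{\nu+1}$; for the merely convex setting here, the model line-search is considerably more direct.
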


\begin{proof}
Recalling Lemma \ref{lemma_descent_x_plus_II} from the proof of Theorem \ref{thm:asymptotic_conv},  we can write
\begin{equation}
\label{Taylor_exp_Fxi_subt_simplified_recalled_SCVX}
F (\bx_i^{\nu+}) \leq \tF_i (\bx_i^{\nu+};\bx_i^\nu)+\frac{1}{2\epsilon}\norm{\bepsilon_i^\nu}^2,
\end{equation}
for arbitrary $\epsilon>0$,   $M_i\geq L$, and $\tau_i\geq \beta+\epsilon$. In addition, by the upperbound approximation of  $\tF_i(\cdot;\bx_i^\nu)$ in \eqref{F_tilde_upperbound}, there holds
\begin{equation}
\label{F_tilde_upperbound_recalled_SCVX}
\begin{aligned}
\tF_i(\by;\bx_i^\nu) \leq  &  F(\by)+\frac{1}{2}\norm{\by-\bx_i^\nu}^2_{(\beta+\tau_i+\epsilon)\bI}+\frac{M_i+\Lhessian}{6}\norm{\by-\bx_i^\nu}^3+\frac{1}{2\epsilon}\norm{\bepsilon_i^\nu}^2,\quad \forall \by\in\mathcal{K}.
\end{aligned}
\end{equation}
Let $\alpha_0\in(0,1]$. Set $\epsilon=\beta$ and $\tau_i=2\beta$. By \eqref{Taylor_exp_Fxi_subt_simplified_recalled_SCVX}-\eqref{F_tilde_upperbound_recalled_SCVX} and $\hxnu{\nu}_i$ being the minimizer of $\tF(\cdot;\bx_i^\nu)$ [see \eqref{def:x_nuplus}], we obtain
\begin{equation}
\label{F_Fast_upperbound_CVX}
\begin{aligned}
& F (\bx_i^{\nu+})-F(\widehat{x})
\\
\leq & \min_{\by\in\mathcal{K}}\left\{F(\by)-F(\widehat{x})+2\beta\norm{\by-\bx_i^\nu}^2+\frac{M_i+\Lhessian}{6}\norm{\by-\bx_i^\nu}^3+\frac{1}{\beta}\norm{\bepsilon_i^\nu}^2\right\}
\\
\leq & \min_{\alpha\in[0,\alpha_0]}\Big\{F(\by)-F(\widehat{x})+2\beta\norm{\by-\bx_i^\nu}^2+\frac{M_i+\Lhessian}{6}\norm{\by-\bx_i^\nu}^3+\frac{1}{\beta}\norm{\bepsilon_i^\nu}^2
\\
&\qquad\qquad\qquad\qquad\qquad\qquad\qquad :y=\alpha\widehat{x}+(1-\alpha)\bx_i^\nu\Big\}
\\
\leq & \min_{\alpha\in[0,\alpha_0]}\Big\{(1-\alpha)\left(F(\bx_i^\nu)-F(\widehat{x})\right)
\\
&\qquad \qquad +2\beta\alpha^2\norm{\widehat{x}-\bx_i^\nu}^2+\frac{M_i+\Lhessian}{6}\alpha^3\norm{\widehat{x}-\bx_i^\nu}^3+\frac{1}{\beta}\norm{\bepsilon_i^\nu}^2 \Big\},
\end{aligned}
\end{equation}
where the last inequality holds by the convexity of $F$. 
Note that, by definition,   $||\bx_i^0-\widehat{x}||\leq D,$ for all $i=1,\ldots, m$. Assuming $||\bx_i^\nu-\widehat{x}||\leq D$, for all $i=1,\ldots, m$, we prove descent at iteration $\nu+1$, i.e. $p^{\nu+1}<p^\nu$, unless $p^\nu=0$. Note that by  Assumption \ref{convex-case}(iv), if $\{p^\nu\}_\nu$ is non-increasing, then  $||\bx_i^\nu-\widehat{x}||\leq D$ for all $\nu\geq 0$ and $i=1,\ldots,m$. Now set $M_i=L$ in  \eqref{F_Fast_upperbound_CVX} and compute the mean over $i=1,\ldots,m$, which yields
\begin{equation}
\label{F_Fast_upperbound_CVX_avgd}
 p^{\nu+1}\overset{\eqref{pnuplus_pnu}}{\leq} p^{\nu+}\leq 
  \min_{\alpha\in[0,\alpha_0]}\Big\{(1-\alpha)p^\nu +2\beta\alpha^2D^2+\frac{\Lhessian D^3}{3}\alpha^3+\frac{1}{m\beta}\norm{\bepsilon^\nu}^2 \Big\}.
\end{equation}

Denote
\begin{equation}\label{def:C_1}
 C_1\triangleq \frac{\Lhessian D^3}{3}.
\end{equation}
Since $D\geq \frac{2}{\Lhessian}$, it holds  $2\beta D^2\leq 3\beta C_1$. Then, setting $\alpha_0=\min\{1,p^\nu/(6\beta C_1)\}$ in \eqref{F_Fast_upperbound_CVX_avgd} yields
\begin{equation}
\label{F_Fast_upperbound_CVX_avgd2}
\begin{aligned}
p^{\nu+1}
\leq & \min_{\alpha\in[0,\min\{1,\frac{p^\nu}{6\beta C_1}\}]}\Big\{(1-\alpha)p^\nu +3\beta C_1\alpha^2+C_1\alpha^3+\frac{1}{m\beta}\norm{\bepsilon^\nu}^2 \Big\}
\\
\leq & \min_{\alpha\in[0,\min\{1,\frac{p^\nu}{6\beta C_1}\}]}\Big\{(1-\alpha/2)p^\nu +C_1\alpha^3+\frac{1}{m\beta}\norm{\bepsilon^\nu}^2 \Big\}.
\end{aligned}
\end{equation}

Let us assess \eqref{F_Fast_upperbound_CVX_avgd2} over the following ``regions". Denoting by $\alpha^\ast$ the minimizer of the optimization problem at the RHS of \eqref{F_Fast_upperbound_CVX_avgd2}, we have  the following: 

\textbf{(a)} If $p^\nu\geq 6C_1$, then $\alpha^\ast=1$ and 
\begin{equation}
\label{recursion_cvx_stage1}
p^{\nu+1}\leq \frac{1}{2}p^\nu+C_1+\frac{1}{m\beta}\norm{\bepsilon^\nu}^2 \leq \left(\frac{1}{2}+\frac{1}{6}\right)p^\nu+\frac{1}{m\beta}\norm{\bepsilon^\nu}^2 ,
\end{equation}
and under the condition 
\begin{equation}\label{delta_cond1_cvx}
\frac{1}{m\beta}\norm{\bepsilon^\nu}^2\leq \frac{1}{6}p^\nu\impliedby  \frac{1}{m\beta}\norm{\bepsilon^\nu}^2\leq C_1,
\end{equation}
 \eqref{recursion_cvx_stage1} yields
\begin{equation*}
p^{\nu+1} \leq \frac{5}{6}~p^\nu.
\end{equation*}
Note that, by \eqref{delta_nu_decaying_bound} and Lemma \ref{cons_track_err_prop}, condition \eqref{delta_cond1_cvx}  holds if 
\begin{equation}\label{K_cond_cvx_1}
K\geq \frac{1}{\sqrt{1-\rho}}\cdot\frac{1}{2} \log\left(\frac{\bar{D}_\delta}{ m\beta C_1}\right).
\end{equation}

\textbf{(b)} If $6\beta^2 C_1\leq p^\nu\leq 6C_1$, then $\alpha^\ast=\sqrt{\frac{p^\nu}{6C_1}}$ and 
\begin{equation}\label{recursion_cvx_stage2}
p^{\nu+1}\leq p^\nu-\frac{(p^\nu)^{3/2}}{3\sqrt{6C_1}}+\frac{1}{m\beta}\norm{\bepsilon^\nu}^2,
\end{equation}
and if (similar to derivation of \eqref{K_cond_cvx_1})
\begin{equation}\label{delta_cond2_cvx}
K\geq \frac{1}{\sqrt{1-\rho}}\cdot\frac{1}{2} \log\left(\frac{\bar{D}_\delta}{m\beta^4 C_1}\right)\implies \frac{1}{m\beta}\norm{\bepsilon^\nu}^2 \leq \beta^3C_1\implies \frac{1}{m\beta}\norm{\bepsilon^\nu}^2 \leq \frac{(p^\nu)^{3/2}}{6\sqrt{6C_1}},
\end{equation}
 \eqref{recursion_cvx_stage2} implies
\begin{equation}
\label{recursion_cvx_stage3}
p^{\nu+1}\leq p^\nu-\frac{(p^\nu)^{3/2}}{6\sqrt{6C_1}}.
\end{equation}
Finally, since $p^\nu$ is non-increasing, 
\begin{equation*}
\begin{aligned}
\frac{1}{\sqrt{p^{\nu+1}}}-\frac{1}{\sqrt{p^{\nu}}}=&\frac{p^\nu-p^{\nu+1}}{\left(\sqrt{p^\nu}+\sqrt{p^{\nu+1}}\right)\sqrt{p^\nu p^{\nu+1}}}\overset{\eqref{recursion_cvx_stage3}}{\geq} \frac{\frac{1}{6\sqrt{6C_1}}(p^\nu)^{3/2}}{\left(\sqrt{p^\nu}+\sqrt{p^{\nu+1}}\right)\sqrt{p^\nu p^{\nu+1}}}
\\
\geq &c_0\triangleq \frac{1}{12}\sqrt{\frac{1}{6C_1}},
\end{aligned}
\end{equation*}
and consequently,
\begin{equation*}
p^\nu\leq \frac{1}{c_0^2\Big(\nu+\frac{1}{c_0\sqrt{p^{0}}}\Big)^2}\leq \frac{1}{c_0^2\nu^2}.
\end{equation*}

\textbf{(c)} If $ \varepsilon\leq p^\nu\leq 6\beta^2C_1$, then $\alpha^\ast= \frac{p^\nu}{6\beta C_1}$ and 
\begin{equation}
\label{cvx_last_stage_bound}
p^{\nu+1}\leq p^\nu-\frac{(p^\nu)^{2}}{18\beta C_1}+\frac{1}{m\beta}\norm{\bepsilon^\nu}^2,
\end{equation}
and if (similar to derivation of \eqref{K_cond_cvx_1})
\begin{equation}\label{delta_cond3_cvx}
K\geq \frac{1}{\sqrt{1-\rho}}\cdot\frac{1}{2} \log\left(\frac{36C_1\bar{D}_\delta}{m\varepsilon^2 }\right)\implies \frac{1}{m\beta}\norm{\bepsilon^\nu}^2 \leq \frac{\varepsilon^{2}}{36\beta C_1}\implies \frac{1}{m\beta}\norm{\bepsilon^\nu}^2 \leq \frac{(p^\nu)^{2}}{36\beta C_1},
\end{equation}
 we deduce from \eqref{cvx_last_stage_bound}  
\begin{equation}
\label{cvx_last_stage_bound2}
p^{\nu+1}\leq p^\nu-\frac{(p^\nu)^{2}}{36\beta C_1}.
\end{equation}
Since $p^\nu$ is non-increasing, 
\begin{equation}
\label{Rate_stage1_part3}
\begin{aligned}
\frac{1}{\sqrt{p^{\nu+1}}}-\frac{1}{\sqrt{p^{\nu}}}=&\frac{p^\nu-p^{\nu+1}}{\left(\sqrt{p^\nu}+\sqrt{p^{\nu+1}}\right)\sqrt{p^\nu p^{\nu+1}}}\overset{\eqref{cvx_last_stage_bound2}}{\geq} \frac{\frac{1}{36\beta C_1}(p^\nu)^{2}}{\left(\sqrt{p^\nu}+\sqrt{p^{\nu+1}}\right)\sqrt{p^\nu p^{\nu+1}}}
\\
\geq &\tilde{c}_0\triangleq \frac{\sqrt{\varepsilon}}{72\beta C_1},
\end{aligned}
\end{equation}
%
and consequently,
\begin{equation}\label{last_phase_rate_cvx}
p^\nu\leq \frac{1}{\tilde{c}_0^2\Big(\nu+\frac{1}{\tilde{c}_0\sqrt{p^{0}}}\Big)^2}\leq \frac{1}{\tilde{c}_0^2\nu^2}=72^2\cdot C_1^2\cdot \frac{\beta^2}{\varepsilon}\cdot \frac{1}{\nu^2}.
\end{equation}
%

Finally, combining all the  conditions \eqref{K_cond_asymptotic}, \eqref{K_cond_cvx_1},\eqref{delta_cond2_cvx}, and \eqref{delta_cond3_cvx}, the   requirement on    $K$ reads 
\begin{equation}\label{K_cond_cvx}
K\geq \frac{1}{\sqrt{1-\rho}}\cdot\frac{1}{2} \log\left(\max\left\{\frac{16}{c_w},\frac{12^2m\Lgrad_{\max}^2}{c_w\beta\max(\beta,\mu)}, \frac{\bar{D}_\delta}{\min\left\{m\beta C_1,m\beta^4 C_1,\frac{m}{36C_1}\varepsilon^2\right\}}\right\}\right),
\end{equation}
where  $\bar{D}_\delta$ and $C_1$ are  defined in \eqref{delta_nu_decaying_bound} and  \eqref{def:C_1}, respectively.
\end{proof}

\subsection{Complexity Analysis when $\beta\geq 1$}\label{sec:cvx_complexity_beta_geq_1}
\begin{theorem}[$\beta\geq 1$ and $\Lhessian>0$]\label{Rate_cvx_THM_2}
	{ 
		Let  Assumptions \ref{convex-case} and \ref{assump:nabla2F_LC}
	-\ref{assump:network} hold and  $\beta\geq 1$. Let $M_i= L>0$, $\tau_i=2\beta$, and recall the definition of $D>0$ implying $\max_{i\in[m]}||\bx_i^0-\widehat{x}||\leq D$. W.l.o.g. assume $D\geq 2/\Lhessian$. Pick an arbitrary $\varepsilon>0$. If a reference matrix $\overline{W}$ satisfying Assumption \ref{assump:weight} is used   in steps \eqref{DiRegINA_pseudocode}-\eqref{grad_track_update},  with $\rho  \triangleq  \lambda_{\max}(\overline{W} - \bJ) < 1$   and $K=\widetilde{\mathcal{O}}(\log(1/\varepsilon)/\sqrt{1-\rho})$ (the explicit expression is given in   \eqref{K_cond_cvx}), then the sequence $\{p^\nu\}$  generated by \alg satisfies the following:
}
	\begin{itemize}
		\item[(a)] if $p^\nu  \geq  \beta\cdot (2\Lhessian D^3)$, 
		\begin{equation*}
		p^{\nu+1} \leq \frac{5}{6}~p^\nu,
		\end{equation*}
		\item[(b)] if $\varepsilon\leq p^\nu 
		\leq \beta \cdot (2\Lhessian D^3)$,   
		\begin{equation*}
		p^\nu\leq 24^2\cdot (\Lhessian D^3)^2\cdot \frac{\beta^2}{\varepsilon}\cdot \frac{1}{\nu^2}.
		\end{equation*}
	\end{itemize}
\end{theorem}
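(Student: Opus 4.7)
The plan is to reuse almost verbatim the machinery from the proof of Theorem \ref{Rate_cvx_THM}. The master recursion \eqref{F_Fast_upperbound_CVX_avgd2}, namely
\begin{equation*}
p^{\nu+1}\leq \min_{\alpha\in[0,\,\min\{1,\,p^\nu/(6\beta C_1)\}]}\Big\{(1-\alpha/2)\,p^\nu + C_1\alpha^3 + \tfrac{1}{m\beta}\norm{\bepsilon^\nu}^2\Big\},
\end{equation*}
with $C_1 = \Lhessian D^3/3$, was derived using only $D\geq 2/\Lhessian$; no hypothesis on the magnitude of $\beta$ was used. Consequently the same bound holds in the regime $\beta\geq 1$, and the proof reduces to re-examining the case analysis of the minimizer $\alpha^\ast$ of the upper-bounding cubic.

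The central structural observation is that when $\beta\geq 1$, the intermediate range $6\beta^2 C_1\leq p^\nu\leq 6C_1$ that produced part (b) of Theorem \ref{Rate_cvx_THM} is empty, since $6\beta^2 C_1\geq 6C_1$. A direct comparison of the unconstrained stationary point $\sqrt{p^\nu/(6C_1)}$ with the feasibility bound $\min\{1,\,p^\nu/(6\beta C_1)\}$ leaves only two cases: (i) if $p^\nu\geq 6\beta C_1$, then both $p^\nu/(6\beta C_1)\geq 1$ and $\sqrt{p^\nu/(6C_1)}\geq 1$, so $\alpha^\ast=1$; (ii) if $p^\nu\leq 6\beta C_1$, the assumption $\beta\geq 1$ forces $p^\nu/(6\beta C_1)\leq \sqrt{p^\nu/(6C_1)}$ (equivalent to $p^\nu\leq 6\beta^2 C_1$, which is implied by $p^\nu\leq 6\beta C_1$), so the feasibility constraint binds and $\alpha^\ast = p^\nu/(6\beta C_1)$. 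These two situations are exactly cases (a) and (c) of Theorem \ref{Rate_cvx_THM}.

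From here the arguments are a straight transcription. In case (i), substituting $\alpha^\ast=1$ into the master inequality and invoking Lemma \ref{cons_track_err_prop} together with \eqref{delta_nu_decaying_bound} to enforce $(m\beta)^{-1}\norm{\bepsilon^\nu}^2\leq p^\nu/6$ through a condition on $K$ identical to \eqref{K_cond_cvx_1} yields $p^{\nu+1}\leq (5/6)\,p^\nu$, proving part (a). In case (ii), substituting $\alpha^\ast = p^\nu/(6\beta C_1)$ produces the recursion \eqref{cvx_last_stage_bound}; choosing $K$ as in \eqref{delta_cond3_cvx} absorbs the noise term to give $p^{\nu+1}\leq p^\nu - (p^\nu)^2/(36\beta C_1)$, and the same telescoping on $1/\sqrt{p^{\nu+1}} - 1/\sqrt{p^\nu}$ used in \eqref{Rate_stage1_part3}--\eqref{last_phase_rate_cvx} yields $p^\nu\leq 72^2 C_1^2 \beta^2/(\varepsilon\nu^2) = 24^2 (\Lhessian D^3)^2 \beta^2/(\varepsilon\nu^2)$, proving part (b). The only conceptually new piece is the collapse-of-the-middle-region observation; all remaining estimates, and in particular the communication requirement $K=\widetilde{\mathcal{O}}(\log(1/\varepsilon)/\sqrt{1-\rho})$ which follows directly from \eqref{K_cond_cvx}, are inherited unchanged from Theorem \ref{Rate_cvx_THM}. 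I expect no genuine obstacle beyond careful bookkeeping to confirm which of the two constraints in $\min\{1,\,p^\nu/(6\beta C_1)\}$ is active in each regime.
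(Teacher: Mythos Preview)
Your proposal is correct and follows essentially the same route as the paper's own proof: both start from the master recursion \eqref{F_Fast_upperbound_CVX_avgd2}, observe that for $\beta\ge 1$ only the two boundary choices $\alpha^\ast=1$ (when $p^\nu\ge 6\beta C_1$) and $\alpha^\ast=p^\nu/(6\beta C_1)$ (when $\varepsilon\le p^\nu\le 6\beta C_1$) occur, and then reuse the estimates and the $K$-condition \eqref{K_cond_cvx} from Theorem~\ref{Rate_cvx_THM} verbatim. The only cosmetic difference is that the paper records the slightly sharper intermediate factor $(4+1/\beta)/6$ in case~(a) before bounding it by $5/6$, whereas you go straight to $5/6$; your explicit ``collapse of the middle region'' framing is a nice conceptual remark but not logically needed.
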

\begin{proof}
Excluding $\beta$, the parameter setting is identical to Theorem \ref{Rate_cvx_THM}. Recall \eqref{F_Fast_upperbound_CVX_avgd2}, i.e.,
\begin{equation}
\label{F_Fast_upperbound_CVX_avgd2_recalled}
\begin{aligned}
p^{\nu+1}
\leq & \min_{\alpha\in[0,\min\{1,\frac{p^\nu}{6\beta C_1}\}]}\Big\{(1-\alpha/2)p^\nu +C_1\alpha^3+\frac{1}{m\beta}\norm{\bepsilon^\nu}^2 \Big\},
\end{aligned}
\end{equation}
where $C_1$ is defined in \eqref{def:C_1}. Denoting by $\alpha^\ast$ the minimizer of the optimization problem at the  RHS of \eqref{F_Fast_upperbound_CVX_avgd2}, we have:

\textbf{(a)} If $p^\nu\geq 6\beta C_1$, then $\alpha^\ast=1$ and 
under \eqref{K_cond_cvx},
 \eqref{F_Fast_upperbound_CVX_avgd2_recalled} yields
\begin{equation*}
p^{\nu+1}\leq \frac{4+1/\beta}{6}p^\nu\leq \frac{5}{6}~p^\nu.
\end{equation*}

\textbf{(b)} If $\varepsilon\leq p^\nu\leq  6\beta C_1$, then $\alpha^\ast=\frac{p^\nu}{6\beta C_1}$. Under \eqref{K_cond_cvx},  \eqref{F_Fast_upperbound_CVX_avgd2_recalled} yields
\begin{equation*}
\begin{aligned}
p^{\nu+1}
\leq & p^\nu-\frac{(p^\nu)^2}{36 \beta C_1},
\end{aligned}
\end{equation*}
and following similar steps as in  derivation of \eqref{last_phase_rate_cvx}, we obtain
\begin{equation*}
p^\nu\leq \frac{1}{\tilde{c}_0^2\nu^2}=72^2\cdot C_1^2\cdot \frac{\beta^2}{\varepsilon}\cdot \frac{1}{\nu^2}.
\end{equation*}
\end{proof}

\subsection{Proof of main theorem}\label{proof_of_thm_appendix:cvx_case}
We proceed to prove Theorem \ref{thm:cvx_case}. Given an  accuracy $0<\varepsilon\ll 1$, when $0<\beta\leq 1$, Theorem \ref{Rate_cvx_THM} gives the following expression of rate: to achieve $p^\nu\leq \varepsilon$, \alg requires
\begin{equation}
\label{CVX_O_expression_1}
O\left(\log\left(\frac{1}{6C_1}\right)+\sqrt{\frac{\Lhessian D^3}{\varepsilon}}+\frac{\beta\left(\Lhessian D^3\right)}{\varepsilon}\right)=\tO{\sqrt{\frac{\Lhessian D^3}{\varepsilon}}+\frac{\beta\left(\Lhessian D^3\right)}{\varepsilon}},
\end{equation}
iterations, while if $\beta\geq 1$, by Theorem \ref{Rate_cvx_THM_2}, \alg requires
\begin{equation*}
O\left(\log\left(\frac{1}{2\beta\Lhessian D^3}\right)+\frac{\beta\left(\Lhessian D^3\right)}{\varepsilon}\right)=\tO{\frac{\beta\left(\Lhessian D^3\right)}{\varepsilon}}
\end{equation*}
iterations. Therefore, \eqref{CVX_O_expression_1} is a valid rate complexity expression (in terms of iterations) in both discussed cases (i.e. $0<\beta\leq 1$ and $\beta\geq 1$). Now, recall that every iteration requires $K$ rounds of communications, with $K$   satisfying \eqref{K_cond_asymptotic} and \eqref{K_cond_cvx}; hence $K=\tO{1/\sqrt{1-\rho}\cdot \log(1/\varepsilon)}=\tO{1/\sqrt{1-\rho}\cdot \varepsilon^{-\alpha/2}}$,   for any arbitrary small $\alpha>0$. Therefore the final communication complexity reads
\begin{equation*}
\tO{\frac{1}{\sqrt{1-\rho}}\cdot \left\{ \sqrt{\frac{\Lhessian D^3}{\varepsilon^{1+\alpha}}}+\frac{\beta\left(\Lhessian D^3\right)}{\varepsilon^{1+\frac{\alpha}{2}}}\right\}}.
\end{equation*}

\section{Proof of Theorem \ref{thm:scvx_case_beta_leq_mu} and Corollary  \ref{corr:scvx_case_beta_leq_mu_initialization_iid} }
\label{sec:scvx_case_proof}
We begin introducing  some intermediate technical results,   instrumental to proving the main theorems, namely: i) Lemmata \ref{Deltax_lowerbound_lemm}-\ref{deltax_upperbnd} in Sec.~\ref{sec:Prelim_results_thm_scvx_case}; and ii)   a detailed ``region-based" complexity of \alg as in  in Theorem \ref{Rate_scvx_THM} (cf. Sec. \ref{sec:Rate_scvx_THM_proof}). 
We prove Theorem \ref{thm:scvx_case_beta_leq_mu} and the improved rates in case of quadratic functions  in Sec. \ref{subsec:scvx_beta_leq_mu} and  Sec. \ref{sec:scvx_case_beta_leq_mu_Qcase_proof}, respectively. Finally, Corollary  \ref{corr:scvx_case_beta_leq_mu_initialization_iid}  is proved in Sec. \ref{sec:scvx_case_beta_leq_mu_initialization_proof}.


\subsection{Preliminary results}\label{sec:Prelim_results_thm_scvx_case}
We establish necessary connections between the optimization error $p^\nu$, the network error $\norm{\bepsilon^\nu}$ and $||\Delta \bx^\nu||$ in Lemmata \ref{deltax_upperbnd}-\ref{Deltax_lowerbound_lemm}:
\begin{lemma}
	\label{deltax_upperbnd}
	Let Assumptions \ref{sconvex-case}-\ref{assump:homogeneity} hold,  $\tau_i=2\beta$, and $M_i\geq \Lhessian/3$. Then  
	\begin{equation}
	\frac{1}{m}\sum_{i=1}^m\norm{\Delta\bx_i^\nu}^2\leq \frac{8}{\mu}~p^\nu+\frac{2}{m\beta\mu}\norm{\bepsilon^\nu}^2,
	\label{deltax_upperbnd_ineq}
	\end{equation}
	where $p^\nu$ is defined in \eqref{p_nu}.
\end{lemma}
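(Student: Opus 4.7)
\medskip

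\noindent\textbf{Proof proposal for Lemma \ref{deltax_upperbnd}.}

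The plan is to invoke the per-iteration descent inequality \eqref{Descent_Final2} already derived in Section \ref{sec:opt_err_bounds}, specialized to the parameter choices of the lemma ($\tau_i=2\beta$, $M_i\ge \Lhessian/3$), and read off an upper bound on $\|\Delta\bx_i^\nu\|^2$ by keeping only the quadratic term on the right-hand side.

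First I would set $\epsilon=\beta$ in \eqref{Descent_Final2}, so that the coefficient in front of $\|\Delta\bx_i^\nu\|^3$ becomes $\tfrac{M_i}{2}-\tfrac{\Lhessian}{6}$, which is nonnegative by the assumption $M_i\ge \Lhessian/3$; hence that cubic term can be dropped. The coefficient in front of $\|\Delta\bx_i^\nu\|^2$ becomes $\tfrac{\mu_i}{2}+\tau_i-\tfrac{\beta+\epsilon}{2}=\tfrac{\mu_i}{2}+\beta$. Under Assumption \ref{sconvex-case} combined with Assumption \ref{assump:homogeneity}, one has $\mu_i\ge \max\{0,\mu-\beta\}$, so a short case split (either $\mu\le \beta$, in which case the coefficient is $\ge \beta\ge \mu/2$; or $\mu>\beta$, in which case it is $\tfrac{\mu+\beta}{2}\ge \mu/2$) lets me lower-bound the quadratic coefficient by $\mu/2$. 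After rearranging, this yields, for every $i$,
\begin{equation*}
\tfrac{\mu}{2}\,\|\Delta\bx_i^\nu\|^2 \;\le\; F(\bx_i^\nu)-F(\bx_i^{\nu+})+\tfrac{1}{2\beta}\|\bepsilon_i^\nu\|^2.
\end{equation*}

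Next I would use the optimality of $\widehat{\bx}$, i.e.\ $F(\bx_i^{\nu+})\ge F(\widehat{\bx})$, to bound $F(\bx_i^\nu)-F(\bx_i^{\nu+})\le F(\bx_i^\nu)-F(\widehat{\bx})$. Dividing by $\mu/2$ and averaging over $i=1,\ldots,m$ then immediately produces
\begin{equation*}
\tfrac{1}{m}\sum_{i=1}^m \|\Delta\bx_i^\nu\|^2 \;\le\; \tfrac{2}{\mu}\,p^\nu+\tfrac{1}{m\beta\mu}\,\|\bepsilon^\nu\|^2,
\end{equation*}
which already implies the stated bound (the constants $8$ and $2$ in the lemma are a loose version of the constants $2$ and $1$ obtained here and can be absorbed without changing the proof).

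I do not expect any serious obstacle: all heavy lifting (the Taylor expansion estimate \eqref{Taylor_exp_Fxi_subt_simplified} and the surrogate descent \eqref{eq:sufficient_descent_dampedNewton_fi}) was already performed in Section~\ref{sec:opt_err_bounds}. The only subtle point is the case analysis to lower-bound $\tfrac{\mu_i}{2}+\beta$ by $\mu/2$ uniformly in the regimes $\beta\le \mu$ and $\beta>\mu$; once that is recorded, the rest is just arithmetic and averaging.
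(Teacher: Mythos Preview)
Your argument is correct and in fact yields the sharper bound $\tfrac{1}{m}\sum_i\|\Delta\bx_i^\nu\|^2\le \tfrac{2}{\mu}p^\nu+\tfrac{1}{m\beta\mu}\|\bepsilon^\nu\|^2$, which implies the stated lemma. However, your route is genuinely different from the paper's. The paper does not use the per-iteration descent \eqref{Descent_Final2} to isolate $\|\Delta\bx_i^\nu\|^2$; instead it applies strong convexity at the point $\bx_i^{\nu+}$, namely $F(\bx_i^{\nu+})-F(\widehat{\bx})\ge \tfrac{\mu}{2}\|\bx_i^{\nu+}-\widehat{\bx}\|^2$, then uses the elementary inequality $\|\bx_i^{\nu+}-\widehat{\bx}\|^2\ge \tfrac12\|\Delta\bx_i^\nu\|^2-\|\bx_i^\nu-\widehat{\bx}\|^2$ together with $\tfrac{\mu}{2}\|\bx_i^\nu-\widehat{\bx}\|^2\le F(\bx_i^\nu)-F(\widehat{\bx})$ to obtain $\tfrac{1}{m}\sum_i\|\Delta\bx_i^\nu\|^2\le \tfrac{4}{\mu}(p^{\nu+}+p^\nu)$, and only then invokes \eqref{Descent_Final3} to bound $p^{\nu+}$ by $p^\nu+\tfrac{1}{2m\beta}\|\bepsilon^\nu\|^2$; this is what produces the constants $8$ and $2$. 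Your approach is more economical: it exploits the quadratic coefficient already present in the descent inequality and replaces $F(\bx_i^{\nu+})$ by $F(\widehat{\bx})$ directly, bypassing the triangle-inequality detour. The only cosmetic point is that the coefficient $\tfrac{\mu_i}{2}+\tau_i-\tfrac{\beta+\epsilon}{2}$ you quote is the line \emph{preceding} \eqref{Descent_Final2} (where $\mu_i$ has not yet been replaced by $\max(0,\mu-\beta)$); either version works for your case split.
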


\begin{proof}
	By $\mu$-strongly convexity of $F$ and optimality of $\widehat{x}$,
	\begin{equation*}
	\begin{aligned}
	F(\hxnu{\nu}_i)-F(\widehat{x})\geq & \frac{\mu}{2}\norm{\hxnu{\nu}_i-\widehat{x}}^2
	\geq  \frac{\mu}{4}\norm{\hxnu{\nu}_i-\bx_i^\nu}^2-\frac{\mu}{2}\norm{\bx_i^\nu-\widehat{x}}^2
	\\
	\geq &\frac{\mu}{4}\norm{\hxnu{\nu}_i-\bx_i^\nu}^2-\left(F(\bx_i^\nu)-F(\widehat{x})\right).
	\end{aligned}
	\end{equation*}
Averaging the above inequalities over $i=1,\ldots,m$,  yields
	\begin{equation*}
	\frac{1}{m}\sum_{i=1}^m\norm{\Delta\bx_i^\nu}^2\leq \frac{4}{\mu}\left(p^{\nu+}+p^\nu\right),
	\end{equation*}
	where $p^{\nu+}= (1/m)\sum_{i=1}^m \left\{F(\bx_i^{\nu+})-F(\widehat{x})\right\}$. Using  \eqref{Descent_Final3} proves \eqref{deltax_upperbnd_ineq}.
\end{proof}

\begin{lemma}\label{Deltax_lowerbound_lemm}
	Let Assumptions \ref{sconvex-case}-\ref{assump:homogeneity} hold and set $\tau_i=2\beta$. Define 
	\begin{equation*}
	\omega_0\triangleq 
	 \frac{12\beta}{\sqrt{L^2+4M_{\max}^2}},\quad M_{\max}\triangleq \max_{i\in[m]}~M_i.
	\end{equation*}
	Then
	\begin{equation}
	\begin{aligned}
	\frac{1}{m}\sum_{i=1}^m \left\{F(\bx_i^{\nu +})-F(\widehat{x})\right\}
	\leq 
	& \varphi \left(\{\hxnu{\nu}_i\}_i,\{\bx_i^\nu\}_i\right) +\frac{8}{m\mu}\norm{\bepsilon^\nu}^2,
	\end{aligned}
	\label{Deltax_lowerbound_xhat}
	\end{equation}
	where
	\begin{equation*}
	\varphi\left(\{\hxnu{\nu}_i\}_i,\{\bx_i^\nu\}_i\right) = \left\{
	\begin{array}{l l}
	\frac{\Lhessian^2+4M_{\max}^2}{m\mu}\left(\sum_{i=1}^m\left\|\hxnu{\nu}_i-\bx_i^\nu\right\|^2\right)^2, & \quad \text{if\quad \hypertarget{pDeltaX_cond}{$\texttt{C}$}: } \sqrt{\sum_{i=1}^m\norm{\hxnu{\nu}_i-\bx_i^\nu}^2} \geq \omega_0;
	\\
	\frac{144\beta^2}{m\mu}\sum_{i=1}^m\norm{\hxnu{\nu}_i-\bx_i^\nu}^2, & \quad \text{if\quad \hypertarget{pDeltaX_cond_bar}{$\overline{\texttt{C}}$}: } \sqrt{\sum_{i=1}^m\norm{\hxnu{\nu}_i-\bx_i^\nu}^2} < \omega_0.
	\end{array} \right.
	\end{equation*}
	
\end{lemma}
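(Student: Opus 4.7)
The plan is to convert the optimality of $\hxnu{\nu}_i$ for the surrogate $\tF_i(\cdot;\bx_i^\nu)$ into a pointwise bound on $F(\hxnu{\nu}_i) - F(\widehat{x})$ that is quadratic in $\|\nabla F(\hxnu{\nu}_i)\|$, then sum over $i$ and split into the two stated regimes. Concretely, starting from the variational inequality \eqref{x_hat_opt_cond} already established in the asymptotic analysis and writing $\nabla F(\hxnu{\nu}_i) = \bigl[\nabla F(\hxnu{\nu}_i) - \nabla F(\bx_i^\nu) - \nabla^2 F(\bx_i^\nu)\Delta\bx_i^\nu\bigr] + \bigl[\nabla F(\bx_i^\nu) + \nabla^2 F(\bx_i^\nu)\Delta\bx_i^\nu\bigr]$, I would bound the first bracket by $\tfrac{L}{2}\|\Delta\bx_i^\nu\|^2$ using \eqref{res1}, and use the optimality condition to bound the inner product of the second bracket against $\widehat{x}-\hxnu{\nu}_i$ by $\bigl(\tfrac{M_i}{2}\|\Delta\bx_i^\nu\|^2 + (\beta+\tau_i)\|\Delta\bx_i^\nu\| + \|\bepsilon_i^\nu\|\bigr)\|\widehat{x}-\hxnu{\nu}_i\|$, invoking Assumption~\ref{assump:homogeneity} ($\|\bbeta_i^\nu\|\leq\beta$) and $\tau_i=2\beta$. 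Combined with $\mu$-strong convexity $F(\hxnu{\nu}_i)-F(\widehat{x}) \leq B_i\|\hxnu{\nu}_i-\widehat{x}\| - \tfrac{\mu}{2}\|\hxnu{\nu}_i-\widehat{x}\|^2 \leq \tfrac{B_i^2}{2\mu}$, this yields
\begin{equation*}
F(\hxnu{\nu}_i) - F(\widehat{x}) \leq \frac{1}{2\mu}\Bigl[\tfrac{L+M_i}{2}\|\Delta\bx_i^\nu\|^2 + 3\beta\|\Delta\bx_i^\nu\| + \|\bepsilon_i^\nu\|\Bigr]^2.
\end{equation*}

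Next I would apply Young's inequality to separate the $\|\bepsilon_i^\nu\|^2$ contribution with the Young parameter chosen to produce exactly the target coefficient $\tfrac{8}{m\mu}$ in front of $\|\bepsilon^\nu\|^2$ after summing. The remaining piece $\bigl(\tfrac{L+M_i}{2}\|\Delta\bx_i^\nu\|^2 + 3\beta\|\Delta\bx_i^\nu\|\bigr)^2$ expands, via another Young step on the cross term, into the quartic $\tfrac{(L+M_i)^2}{2}\|\Delta\bx_i^\nu\|^4$ plus a quadratic multiple of $\beta^2\|\Delta\bx_i^\nu\|^2$. Using $(L+M_i)^2 \leq L^2+4M_{\max}^2$ and $\sum_i\|\Delta\bx_i^\nu\|^4 \leq (\sum_i\|\Delta\bx_i^\nu\|^2)^2$, the sum $\sum_i B_i^2$ reduces to a quartic term of the form $(L^2+4M_{\max}^2)(\sum_i\|\Delta\bx_i^\nu\|^2)^2$ plus a quadratic term of the form $\beta^2\sum_i\|\Delta\bx_i^\nu\|^2$, up to explicit numerical constants.

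The final step is a case analysis around exactly the threshold $\omega_0 = 12\beta/\sqrt{L^2+4M_{\max}^2}$, which is precisely where the two pieces balance: squaring the defining inequality gives $\omega_0^2 = 144\beta^2/(L^2+4M_{\max}^2)$. Under the hypothesis $\texttt{C}$ that $\sum_i\|\Delta\bx_i^\nu\|^2 \geq \omega_0^2$, one writes $\beta^2\sum_i\|\Delta\bx_i^\nu\|^2 \leq \tfrac{L^2+4M_{\max}^2}{144}(\sum_i\|\Delta\bx_i^\nu\|^2)^2$ to absorb the quadratic term into the quartic, producing the bound $\tfrac{L^2+4M_{\max}^2}{m\mu}(\sum_i\|\Delta\bx_i^\nu\|^2)^2$. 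Under $\overline{\texttt{C}}$ the opposite substitution $(L^2+4M_{\max}^2)(\sum_i\|\Delta\bx_i^\nu\|^2)^2 \leq 144\beta^2 \sum_i\|\Delta\bx_i^\nu\|^2$ absorbs the quartic into the quadratic, producing $\tfrac{144\beta^2}{m\mu}\sum_i\|\Delta\bx_i^\nu\|^2$.

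The main obstacle is not conceptual but combinatorial: making the several Young's inequalities (for the $\|\bepsilon^\nu\|$ split, the cross term in $B_i^2$, and the case-analysis absorption) cooperate so that the final coefficients of $(\sum\|\Delta\|^2)^2$, $\sum\|\Delta\|^2$, and $\|\bepsilon^\nu\|^2$ emerge exactly as $L^2+4M_{\max}^2$, $144\beta^2$, and $8$ rather than some inflated multiples; in particular, the factor $144=12^2$ appearing in both $\omega_0$ and the coefficient of the $\overline{\texttt{C}}$ bound is what forces the choice $\omega_0 = 12\beta/\sqrt{L^2+4M_{\max}^2}$ rather than a different threshold.
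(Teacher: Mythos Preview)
Your plan is essentially the paper's own argument: start from the variational inequality \eqref{x_hat_opt_cond}, split $\nabla F(\hxnu{\nu}_i)$ into the Taylor remainder and the linearized part, invoke \eqref{res1} for the remainder, use $\mu$-strong convexity together with Young/Cauchy--Schwarz to turn the cross terms into a quartic-plus-quadratic bound, then do the case split at $\omega_0$. The skeleton is correct and matches the paper line for line.

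There is one concrete step that fails as written. You claim $(L+M_i)^2 \leq L^2+4M_{\max}^2$; this is false in general (take $L=3$, $M_i=M_{\max}=1$: $16>13$), and the lemma does not assume $M_i\geq L$. The problem comes from lumping $\tfrac{L}{2}\|\Delta\bx_i^\nu\|^2$ and $\tfrac{M_i}{2}\|\Delta\bx_i^\nu\|^2$ into a single term $\tfrac{L+M_i}{2}\|\Delta\bx_i^\nu\|^2$ before squaring. The paper avoids this by keeping the two quartic contributions separate with \emph{different} Young parameters: the remainder term $\langle\nabla F(\hxnu{\nu}_i)-\nabla F(\bx_i^\nu)-\nabla^2F(\bx_i^\nu)\Delta\bx_i^\nu,\widehat{x}-\hxnu{\nu}_i\rangle$ is balanced directly against the full $\tfrac{\mu}{2}\|\hxnu{\nu}_i-\widehat{x}\|^2$ (yielding $-\tfrac{L^2}{8\mu}\|\Delta\bx_i^\nu\|^4$), while the cubic-regularizer term $\langle \tfrac{M_i}{2}\|\Delta\bx_i^\nu\|\Delta\bx_i^\nu,\hxnu{\nu}_i-\widehat{x}\rangle$, the $\bepsilon_i^\nu$ term, and the $\tilde{\bbeta}_i^\nu$ term each get their own Young split with parameters $\epsilon_0=\tfrac{\mu}{2M_{\max}}$, $\epsilon_1=\tfrac{\mu}{4}$, $\epsilon_2=\tfrac{\mu}{4(\beta+\tau_{\max})}$, and the resulting $\|\hxnu{\nu}_i-\widehat{x}\|^2$ factors are absorbed back into $F(\hxnu{\nu}_i)-F(\widehat{x})$ via strong convexity. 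This delivers exactly $\tfrac{L^2}{8\mu}+\tfrac{M_{\max}^2}{2\mu}=\tfrac{L^2+4M_{\max}^2}{8\mu}$ for the quartic coefficient and $\tfrac{8(\beta+\tau_{\max})^2}{\mu}=\tfrac{72\beta^2}{\mu}$ for the quadratic one; the ratio $72/(1/2)=144=12^2$ is then precisely what fixes $\omega_0$. If you simply refrain from merging the $L$- and $M_i$-terms and apply Young to each separately, your constants will line up.
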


\begin{proof}
	Recall \eqref{x_hat_opt_cond}, a consequence of  optimality of $\hxnu{\nu}_i$ (defined in \eqref{def:x_nuplus}), reads
	\begin{equation}
	\begin{aligned}
	&\left\langle \nabla F(\bx_i^\nu)+\nabla^2F(\bx_i^\nu)\Delta\bx_i^\nu,\widehat{x}-\hxnu{\nu}_i\right\rangle
	\\ 
	\geq & \Big\langle \frac{M_i}{2}||\Delta\bx_i^\nu||\Delta\bx_i^\nu,\hxnu{\nu}_i- \widehat{x}\Big\rangle
	+\left\langle \bepsilon_i^\nu,\hxnu{\nu}_i- \widehat{x}\right\rangle +\left\langle \tilde{\bbeta}_i^\nu\Delta\bx_i^\nu,\hxnu{\nu}_i- \widehat{x}\right\rangle,
	\end{aligned}
	\label{x_hat_opt_cond_recalled}
	\end{equation}
	where  $\tilde{\bbeta}_i^\nu= \bbeta_i^\nu+\tau_i I$ and recall $\Delta\bx_i^\nu=\hxnu{\nu}_i-\bx_i^\nu$ [cf. \eqref{def:p_deltaX}]. By $\mu$-strongly convexity of $F$, 
	\begin{equation}
	\begin{aligned}
	&F(\widehat{x})-F(\hxnu{\nu}_i)
	\\
	\geq & \left\langle \nabla F(\hxnu{\nu}_i),\widehat{x}-\hxnu{\nu}_i\right\rangle+\frac{\mu}{2}\norm{\hxnu{\nu}_i-\widehat{x}}^2
	\\
	=& \left\langle \nabla F(\hxnu{\nu}_i)-\nabla F(\bx_i^\nu)-\nabla^2F(\bx_i^\nu)\Delta\bx_i^\nu,\widehat{x}-\hxnu{\nu}_i\right\rangle+\frac{\mu}{2}\norm{\hxnu{\nu}_i-\widehat{x}}^2
	\\
	&+\left\langle \nabla F(\bx_i^\nu)+\nabla^2F(\bx_i^\nu)\Delta\bx_i^\nu,\widehat{x}-\hxnu{\nu}_i\right\rangle
	\\
	\overset{\eqref{x_hat_opt_cond_recalled}}{\geq}& \left\langle \nabla F(\hxnu{\nu}_i)-\nabla F(\bx_i^\nu)-\nabla^2F(\bx_i^\nu)\Delta\bx_i^\nu,\widehat{x}-\hxnu{\nu}_i\right\rangle+\frac{\mu}{2}\norm{\hxnu{\nu}_i-\widehat{x}}^2
	\\
	&+\left\langle \frac{M_i}{2}||\Delta\bx_i^\nu||\Delta\bx_i^\nu,\hxnu{\nu}_i- \widehat{x}\right\rangle
	+\left\langle \bepsilon_i^\nu,\hxnu{\nu}_i- \widehat{x}\right\rangle +\left\langle \tilde{\bbeta}_i^\nu\Delta\bx_i^\nu,\hxnu{\nu}_i- \widehat{x}\right\rangle
	\\
	\geq & -\frac{1}{2\mu}\left\|\nabla F(\hxnu{\nu}_i)-\nabla F(\bx_i^\nu)-\nabla^2F(\bx_i^\nu)\Delta\bx_i^\nu\right\|^2
	\\
	&+\left\langle \frac{M_i}{2}||\Delta\bx_i^\nu||\Delta\bx_i^\nu,\hxnu{\nu}_i- \widehat{x}\right\rangle
	+\left\langle \bepsilon_i^\nu,\hxnu{\nu}_i- \widehat{x}\right\rangle +\left\langle \tilde{\bbeta}_i^\nu\Delta\bx_i^\nu,\hxnu{\nu}_i- \widehat{x}\right\rangle,
	\end{aligned}
	\label{ineq_intermediate_1}
	\end{equation}
	and by applying Lemma \ref{res_lemma} (cf. inequality \eqref{res1}) to the first term on the RHS of \eqref{ineq_intermediate_1} along with Cauchy-schwarz inequality, yield
	\begin{equation}
	\begin{aligned}
	& F(\widehat{x})-F(\hxnu{\nu}_i)
	\\
	\geq &-\left(\frac{\Lhessian^2}{8\mu}+\frac{M_i}{4\epsilon_0}\right)\left\|\Delta\bx_i^\nu\right\|^4-\frac{M_i\epsilon_0}{4}\norm{\hxnu{\nu}_i- \widehat{x}}^2
	-\frac{1}{2\epsilon_1}\norm{\bepsilon_i^\nu}^2-\frac{\epsilon_1}{2}\norm{\hxnu{\nu}_i- \widehat{x}}^2 +\left\langle \tilde{\bbeta}_i^\nu\Delta\bx_i^\nu,\hxnu{\nu}_i- \widehat{x}\right\rangle
	\\
		\overset{(a)}{\geq}  
	&-\left(\frac{\Lhessian^2}{8\mu}+\frac{M_i}{4\epsilon_0}\right)\left\|\Delta\bx_i^\nu\right\|^4
	-\left(\frac{M_i\epsilon_0}{2\mu}+\frac{\epsilon_1}{\mu}\right)\left(F(\hxnu{\nu}_i)-F(\widehat{x})\right)
	-\frac{1}{2\epsilon_1}\norm{\bepsilon_i^\nu}^2+\left\langle \tilde{\bbeta}_i^\nu\Delta\bx_i^\nu,\hxnu{\nu}_i- \widehat{x}\right\rangle,
	\end{aligned}
	\label{ineq_intermediate_2}
	\end{equation}
	for  arbitrary $\epsilon_0,\epsilon_1>0$, where (a) is due to the  $\mu$-strongly convexity of $F$ and optimality of $\widehat{x}$. By Assumption \ref{assump:homogeneity} and some algebraic manipulations, the last term on the RHS of \eqref{ineq_intermediate_2} is lower-bounded as 
	\begin{equation}
	\begin{aligned}
	\left\langle \Delta\bx_i^\nu,\hxnu{\nu}_i- \widehat{x}\right\rangle_{\tilde{\bbeta}_i^\nu}
	\geq & -\frac{\beta+\tau_i}{2\epsilon_2}\norm{\Delta\bx_i^\nu}^2-\frac{\epsilon_2(\beta+\tau_i)}{2}\norm{\hxnu{\nu}_i- \widehat{x}}^2
	\\
	\overset{(a)}{\geq} & -\frac{\beta+\tau_i}{2\epsilon_2}\norm{\Delta\bx_i^\nu}^2-\frac{\epsilon_2(\beta+\tau_i)}{\mu}\left(F(\hxnu{\nu}_i)-F(\widehat{x})\right),
	\end{aligned}
	\label{innerprod_bound}
	\end{equation}
	with arbitrary $\epsilon_2>0$, where (a) follows from the $\mu$-strong convexity of $F$ and optimality of $\widehat{x}$.
Set 
	\begin{equation*}
	\epsilon_0=\frac{\mu}{2M_{\max}},\quad  \epsilon_1=\frac{\mu}{4},\quad  \epsilon_2=\frac{\mu}{4(\beta+\tau_{\max})},
	\end{equation*}
where $\tau_{\max}\triangleq \max_{i\in[m]}~\tau_i$; then combining  \eqref{ineq_intermediate_2}-\eqref{innerprod_bound} and averaging  over $i=1,\ldots,m$, lead to
	\begin{equation}
	\begin{aligned}
	&\frac{1}{m}\sum_{i=1}^m\left(F(\hxnu{\nu}_i)-F(\widehat{x})\right)
	\leq 
	&\frac{\Lhessian^2+4M_{\max}^2}{2m\mu}\sum_{i=1}^m\left\|\Delta\bx_i^\nu\right\|^4+\frac{8\left(\beta+\tau_{\max}\right)^2}{m\mu}\sum_{i=1}^m\norm{\Delta\bx_i^\nu}^2+\frac{8}{m\mu}\norm{\bepsilon^\nu}^2.
	\end{aligned}
	\label{ineq_intermediate_4}
	\end{equation}
	The bound \eqref{Deltax_lowerbound_xhat} is a direct consequence of \eqref{ineq_intermediate_4}, with  $\tau_i=2\beta$, for all $i=1,\ldots,m$.
\end{proof}

\subsection{Preliminary complexity results}\label{sec:Rate_scvx_THM_proof}
\begin{theorem}\label{Rate_scvx_THM}
	Let  Assumptions \ref{sconvex-case}-\ref{assump:network} hold. Let also $M_i\geq L$ and $\tau_i=2\beta$, for all $i=1,\ldots,m$, and denote
	\begin{equation*}
	\begin{aligned}
	 C_2\triangleq \xi\cdot  \frac{(M_{\max}+\Lhessian)\sqrt{2m}}{3\mu^{3/2}},\quad M_{\max}\triangleq \max_{i\in[m]}~M_i,
	\end{aligned}
	\end{equation*}
	{ for some arbitrary $\xi\geq 1$. If a reference matrix $\overline{W}$ satisfying Assumption \ref{assump:weight} is used   in steps \eqref{DiRegINA_pseudocode}-\eqref{grad_track_update},  with $\rho  \triangleq  \lambda_{\max}(\overline{W} - \bJ) < 1$ and    $K=\widetilde{\mathcal{O}}(1/\sqrt{1-\rho})$ (the explicit expression of $K$ is given in  \eqref{K_cond_scvx}), then the sequence $\{p^\nu\}$ generated by \alg satisfies the following:}
	\begin{itemize}
		\item[(a)] 	If 
		\begin{equation*}
		p^\nu\geq \underline{p}_1\triangleq \frac{\mu^3}{2m(M_{\max}+\Lhessian)^2\xi^2}\left(1+\frac{4\beta}{\mu}\right)^4,
		\end{equation*}
		then  
		\begin{equation*}
		(p^\nu)^{1/4}\leq (p^0)^{1/4}-\frac{\nu}{12\sqrt{3C_2}}.
		\end{equation*}
		
		\item[(b)] Assume [exclusively in this case (b)] $\beta\leq \mu$ and denote 
		\begin{equation*}
		\tilde{p}^\nu\triangleq p^\nu/c^2,\quad c\triangleq\frac{\mu\sqrt{\mu}}{8\sqrt{m(\Lhessian^2+4M_{\max}^2)}},\quad  \underline{p}_2\triangleq \frac{2\cdot 12^4}{\Lhessian^2+4M_{\max}^2}\cdot \frac{\beta^2\mu}{m}.
		\end{equation*}
		If $p^{\nu}\geq \underline{p}_2$ and $p^{\nu-1}\leq c^2$, then $\tilde{p}^{\nu}\leq  (\tilde{p}^{\nu-1})^{2}$.
		\item[(c)] If
		\begin{equation}\label{Last_phase_cond_SCVX}
		p^\nu<  \underline{p}_3\triangleq  \frac{9}{L^2+4M_{\max}^2}\cdot \frac{\beta^2\mu}{m},
		\end{equation}
then $\{p^\nu\}$ converges $Q$-linearly to zero with   rate

\begin{equation}
\label{decay_rate_thm}
\left(1+\frac{\max(\beta,\mu)}{4mb_2}\right)^{-1}=\left(1+\frac{1}{576}\cdot \frac{\mu\max(\beta,\mu)}{\beta^2}\right)^{-1}.
\end{equation} 
	\end{itemize}
\end{theorem}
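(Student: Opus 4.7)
The overall plan is to combine the one-step descent inequality \eqref{Descent_Final3} with the two-regime progress bound of Lemma \ref{Deltax_lowerbound_lemm}, and to tune $K$ via \eqref{delta_sqr_decay_bnd} so that in each phase the network-error term $\|\bepsilon^\nu\|^2$ is dominated by the leading progress term. The three thresholds $\underline{p}_1,\underline{p}_2,\underline{p}_3$ then simply mark which case of Lemma \ref{Deltax_lowerbound_lemm} is active and which term of the descent-plus-lemma recursion binds: (a) case \texttt{C} with a $\sqrt{p^\nu}$-size decrease, giving a $1/4$-power telescoping bound; (b) case \texttt{C} chained with Lemma \ref{deltax_upperbnd} giving a quadratic recursion in the rescaled error; (c) case $\overline{\texttt{C}}$ giving a linear (geometric) recursion.

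For part (a), the hypothesis $p^\nu\geq\underline{p}_1$ is handled by dichotomy. If case $\overline{\texttt{C}}$ is active at iterate $\nu$, then $p^{\nu+1}\leq p^{\nu+}\leq \tfrac{144\beta^2}{m\mu}\omega_0^2+\text{err}\leq \tfrac{1}{2}\underline{p}_1$ by the calibration of $\underline{p}_1$ and of $K$, so $(p^\nu)^{1/4}-(p^{\nu+1})^{1/4}\geq (1-2^{-1/4})\underline{p}_1^{1/4}$, already of the right order since $\underline{p}_1^{1/4}\asymp 1/\sqrt{C_2}$. Otherwise case \texttt{C} is active, and inverting the quartic bound $p^{\nu+}\leq\tfrac{L^2+4M_{\max}^2}{m\mu}(\sum_i\|\Delta\bx_i^\nu\|^2)^2+\text{err}$ (after absorbing the error into $K$) yields $\sum_i\|\Delta\bx_i^\nu\|^2\geq \sqrt{m\mu\, p^{\nu+1}/(2(L^2+4M_{\max}^2))}$; combining with \eqref{Descent_Final3} gives $p^\nu-p^{\nu+1}\geq c_0\sqrt{p^{\nu+1}}$ with $c_0\asymp 1/\sqrt{C_2}$, and the $1/4$-power decrement is then recovered by splitting into the subcases $p^{\nu+1}\geq p^\nu/16$ (mean-value-theorem bound $(p^\nu)^{1/4}-(p^{\nu+1})^{1/4}\geq (p^\nu-p^{\nu+1})/(4(p^\nu)^{3/4})$) and $p^{\nu+1}<p^\nu/16$ (direct). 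Telescoping in $\nu$ closes part (a).

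Part (b) is a two-line chain: under $\beta\leq\mu$, the \texttt{C}-case quartic bound of Lemma \ref{Deltax_lowerbound_lemm} at iterate $\nu-1$ composed with the upper bound of Lemma \ref{deltax_upperbnd} gives $p^\nu\leq \tfrac{L^2+4M_{\max}^2}{m\mu}\bigl(\tfrac{8m}{\mu}\,p^{\nu-1}\bigr)^{2}+\text{err}=(p^{\nu-1})^2/c^2+\text{err}$, which rescales to $\tilde p^\nu\leq(\tilde p^{\nu-1})^2+\text{err}/c^2$; the condition $p^{\nu-1}\leq c^2$ makes $\tilde p^{\nu-1}\leq 1$ so that the contraction is nontrivial, while $p^\nu\geq\underline{p}_2$ ensures, via \eqref{K_cond_scvx}, that the network residual is dominated by $(\tilde p^{\nu-1})^2$, giving the clean bound $\tilde p^\nu\leq (\tilde p^{\nu-1})^2$. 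For part (c), $p^\nu<\underline{p}_3$ combined with Lemma \ref{deltax_upperbnd} forces $\sum_i\|\Delta\bx_i^\nu\|^2<\omega_0^2$, so case $\overline{\texttt{C}}$ is active and $p^{\nu+}\leq \tfrac{144\beta^2}{m\mu}\sum_i\|\Delta\bx_i^\nu\|^2+\text{err}$. Inverting and plugging into \eqref{Descent_Final3} produces the $Q$-linear recursion $\bigl(1+\tfrac{\mu\max(\beta,\mu)}{576\beta^2}\bigr)\,p^{\nu+1}\leq p^\nu$, which is exactly \eqref{decay_rate_thm}.

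The main obstacle is the error calibration across the three phases: $\|\bepsilon^\nu\|^2$ has to be made small enough (by tuning $K$) to be absorbed by whichever term on the right-hand side of the descent-plus-lemma recursion is leading in that phase, and the worst-case requirement across the three phases yields the explicit \eqref{K_cond_scvx}. A second delicate point is the dichotomy in part (a): both the $\overline{\texttt{C}}$-branch (giving a large $1/4$-power drop of order $\underline{p}_1^{1/4}$) and the \texttt{C}-branch (giving $c_0\asymp 1/\sqrt{C_2}$) must match the stated constant $1/(12\sqrt{3C_2})$, which fixes the numerical constants in $\underline{p}_1$ (this is what forces the exact $(1+4\beta/\mu)^4$ factor) and leaves the free parameter $\xi\geq 1$ in $C_2$ to supply the slack needed for the two branches to match up quantitatively.
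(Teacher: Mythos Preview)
Your plan for parts (b) and (c) is essentially right in spirit, but your approach to part (a) has a genuine gap that prevents it from yielding the stated rate.

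For part (a), inverting the case-\texttt{C} quartic bound of Lemma~\ref{Deltax_lowerbound_lemm} and inserting into \eqref{Descent_Final3} gives a recursion of the form $p^\nu-p^{\nu+1}\gtrsim c_0\sqrt{p^{\nu+1}}$ with $c_0\asymp \mu^{3/2}/\sqrt{m(\Lhessian^2+M_{\max}^2)}$. This is \emph{not} the same order as $1/\sqrt{C_2}\asymp \mu^{3/4}/(m^{1/4}\sqrt{M_{\max}+\Lhessian})$, so your claim ``$c_0\asymp 1/\sqrt{C_2}$'' is false. More importantly, a $\sqrt{p}$-size decrement telescopes to $\sqrt{p^\nu}\le\sqrt{p^0}-O(c_0)\,\nu$, not to the fourth-root law $(p^\nu)^{1/4}\le (p^0)^{1/4}-\nu/(12\sqrt{3C_2})$. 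Your mean-value step confirms this: it produces $(p^\nu)^{1/4}-(p^{\nu+1})^{1/4}\ge c_0/(16\,(p^\nu)^{1/4})$, which is \emph{not} uniformly bounded below (it shrinks as $p^\nu$ grows), so no linear-in-$\nu$ telescoping for $(p^\nu)^{1/4}$ follows. In fact the recursion you derive is exactly the one the paper obtains in part~(b), where it yields quadratic convergence only \emph{below} the threshold $c^2$; above it, it gives a weaker rate than the theorem claims.

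The paper's part (a) uses a different mechanism altogether: it combines \eqref{Taylor_exp_Fxi_subt_simplified_recalled_SCVX2}--\eqref{F_tilde_upperbound_recalled_SCVX2} with $\mu$-strong convexity along the segment $\by=\alpha\widehat{x}+(1-\alpha)\bx_i^\nu$, restricting $\alpha\in[0,\alpha_0]$ with $\alpha_0=(1+4\beta/\mu)^{-1}$ so that the $2\beta\alpha^2\|\widehat{x}-\bx_i^\nu\|^2$ term is absorbed by the strong-convexity term $-\tfrac{\mu}{2}\alpha(1-\alpha)\|\widehat{x}-\bx_i^\nu\|^2$. This yields $p^{\nu+1}\le\min_{\alpha\in[0,\alpha_0]}\{(1-\alpha)p^\nu+C_2\alpha^3(p^\nu)^{3/2}\}+\mathrm{err}$; optimizing in $\alpha$ gives $p^{\nu+1}\le p^\nu-\tfrac{1}{3\sqrt{3C_2}}(p^\nu)^{3/4}$, and an induction delivers the fourth-root bound. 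The factor $(1+4\beta/\mu)^4$ in $\underline{p}_1$ is precisely $\alpha_0^{-4}$, arising from the constraint $\alpha^*\le\alpha_0$ at optimum, not from any dichotomy or branch-matching as you suggest.

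Two smaller points. Your part (b) chain (Lemma~\ref{deltax_upperbnd} $\to$ case-\texttt{C} of Lemma~\ref{Deltax_lowerbound_lemm}) is valid and in fact more direct than the paper's route; the paper instead inverts case-\texttt{C} to lower-bound $\|\Delta\bx^\nu\|^2$ and plugs into the Lyapunov inequality \eqref{synergy_net_opt_descent_recalled}, reaching the same $\tilde p^{\nu+1}+\sqrt{\tilde p^{\nu+1}}\le\tilde p^\nu$. For part (c), however, ``inverting and plugging into \eqref{Descent_Final3}'' leaves an additive $O(\|\bepsilon^\nu\|^2)$ on the right that cannot be absorbed by an absolute bound once $p^\nu\to 0$; the paper handles this by working with the Lyapunov function $\zeta^\nu=w(1+\tfrac{\max(\beta,\mu)}{4mb_2})p^\nu+(\tilde\delta^\nu)^2$ and choosing $c_w$ via \eqref{c_w_cond}, which is what actually yields the $Q$-linear rate \eqref{decay_rate_thm}. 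Your sketch should make this coupling explicit.
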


\begin{proof}
We organize the proof into three parts, \textbf{(a)-(c)}, in accordance with the  three cases in  the statement of the theorem.  

\textbf{(a)} 
Recall Lemma \ref{lemma_descent_x_plus_II} from the proof of Theorem \ref{thm:asymptotic_conv}: 
\begin{equation}
\label{Taylor_exp_Fxi_subt_simplified_recalled_SCVX2}
F (\bx_i^{\nu+}) \leq \tF_i (\bx_i^{\nu+};\bx_i^\nu)+\frac{1}{2\epsilon}\norm{\bepsilon_i^\nu}^2,
\end{equation}
for arbitrary $\epsilon>0$, where $M_i\geq L$ and $\tau_i\geq \beta+\epsilon$. In addition, by the upperbound approximation of  $\tF_i(\cdot;\bx_i^\nu)$ in \eqref{F_tilde_upperbound}, there holds
\begin{equation}
\label{F_tilde_upperbound_recalled_SCVX2}
\begin{aligned}
\tF_i(\by;\bx_i^\nu) \leq  &  F(\by)+\frac{1}{2}\norm{\by-\bx_i^\nu}^2_{(\beta+\tau_i+\epsilon)\bI}+\frac{M_i+\Lhessian}{6}\norm{\by-\bx_i^\nu}^3+\frac{1}{2\epsilon}\norm{\bepsilon_i^\nu}^2,\quad \forall \by\in\mathcal{K}.
\end{aligned}
\end{equation}
Set $\tau_i=2\beta$ and $\epsilon=\beta$, then by  \eqref{Taylor_exp_Fxi_subt_simplified_recalled_SCVX2}-\eqref{F_tilde_upperbound_recalled_SCVX2} and $\hxnu{\nu}_i$ being the minimizer of $\tF(\cdot;\bx_i^\nu)$,  
\begin{equation}
\label{F_Fast_upperbound_SCVX}
\begin{aligned}
& F (\bx_i^{\nu+})-F(\widehat{x})
\\
\leq & \min_{\by\in\mathcal{K}}\left\{F(\by)-F(\widehat{x})+2\beta\norm{\by-\bx_i^\nu}^2+\frac{M_i+\Lhessian}{6}\norm{\by-\bx_i^\nu}^3+\frac{1}{\beta}\norm{\bepsilon_i^\nu}^2\right\}
\\
\leq & \min_{\alpha\in[0,\alpha_0]}\Big\{F(\by)-F(\widehat{x})+2\beta\norm{\by-\bx_i^\nu}^2+\frac{M_i+\Lhessian}{6}\norm{\by-\bx_i^\nu}^3+\frac{1}{\beta}\norm{\bepsilon_i^\nu}^2
:y=\alpha\widehat{x}+(1-\alpha)\bx_i^\nu\Big\}
\\
\overset{(a)}{\leq} & \min_{\alpha\in[0,\alpha_0]}\Big\{(1-\alpha)\left(F(\bx_i^\nu)-F(\widehat{x})\right)-\frac{\alpha(1-\alpha)\mu}{2}\norm{\bx_i^\nu-\widehat{x}}^2
\\
&\qquad\qquad\qquad\qquad\qquad\qquad \qquad +2\beta\alpha^2\norm{\widehat{x}-\bx_i^\nu}^2+\frac{M_i+\Lhessian}{6}\alpha^3\norm{\widehat{x}-\bx_i^\nu}^3+\frac{1}{\beta}\norm{\bepsilon_i^\nu}^2 \Big\},
\end{aligned}
\end{equation}
where (a) is due to the $\mu$-strong convexity of $F$.
If $\alpha_0=  1/(1+4\beta/\mu)$,   \eqref{F_Fast_upperbound_SCVX} implies
\begin{equation*}
 F (\bx_i^{\nu+})-F(\widehat{x})
\leq  \min_{\alpha\in[0,\alpha_0]}\Big\{(1-\alpha)\left(F(\bx_i^\nu)-F(\widehat{x})\right)+\frac{M_i+\Lhessian}{6}\alpha^3\norm{\widehat{x}-\bx_i^\nu}^3+\frac{1}{\beta}\norm{\bepsilon_i^\nu}^2 \Big\},
\end{equation*}
where by the $\mu$-strongly convexity of $F$ and optimality of $\widehat{x}$, we also deduce
\begin{equation}
\label{F_Fast_upperbound_SCVXcase}
\begin{aligned}
& F (\bx_i^{\nu+})-F(\widehat{x})
\\
\leq & \min_{\alpha\in[0,\alpha_0]}\Big\{(1-\alpha)\left(F(\bx_i^\nu)-F(\widehat{x})\right)
+\frac{M_i+\Lhessian}{6}\alpha^3\left(\frac{2}{\mu}\left(F(\bx_i^\nu)-F(\widehat{x})\right)\right)^{3/2}+\frac{1}{\beta}\norm{\bepsilon_i^\nu}^2 \Big\}.
\end{aligned}
\end{equation}
Averaging  \eqref{F_Fast_upperbound_SCVXcase} over $i=1,2,\ldots,m$ while  using \eqref{pnuplus_pnu}, yields
\begin{equation}
\label{p_upperbound_scvx}
p^{\nu+1}\leq \min_{\alpha\in[0,\alpha_0]}\Big\{(1-\alpha)p^\nu 
+C_2\alpha^3\left(p^\nu\right)^{3/2}+\frac{1}{m\beta}\norm{\bepsilon^\nu}^2 \Big\},\quad  C_2\triangleq \xi\cdot  \frac{(M_{\max}+\Lhessian)\sqrt{2m}}{3\mu^{3/2}},
\end{equation}
where $M_{\max}=\max_{i\in[m]}~M_i$ and   $\xi\geq 1$ is arbitrary. 

Denote by $\alpha^\ast$ the minimizer of the  RHS of \eqref{p_upperbound_scvx}; then if $p^\nu\geq\underline{p}_1\triangleq 1/(9C_2^2\alpha_0^4)$, we have   $\alpha^\ast=1/\sqrt{3C_2\sqrt{p^\nu}}$, and 
\begin{equation}
\label{p_recursion_firststage_scvx_final}
\begin{aligned}
p^{\nu+1}
\leq &  p^\nu-\frac{2(p^\nu)^{3/4}}{3\sqrt{3C_2}}+\frac{1}{m\beta}\norm{\bepsilon^\nu}^2.
\end{aligned}
\end{equation}
If 
\begin{equation}
\label{K_cond_scvx_a}
\frac{1}{m\beta}\norm{\bepsilon^\nu}^2\leq \frac{1}{3\sqrt{3C_2}}(\underline{p}_1)^{3/4}\implies  \frac{1}{m\beta}\norm{\bepsilon^\nu}^2\leq \frac{1}{3\sqrt{3C_2}}(p^\nu)^{3/4},
\end{equation}
  \eqref{p_recursion_firststage_scvx_final} yields
\begin{equation}
\label{p_recursion_firststage_scvx_final3}
\begin{aligned}
p^{\nu+1}\leq &  p^\nu-  \tilde{c}~(p^\nu)^{3/4},\quad \forall \nu\geq 0,\quad \tilde{c}\triangleq \frac{1}{3\sqrt{3C_2}}.
\end{aligned}
\end{equation}
Note that, by \eqref{delta_nu_decaying_bound} and Lemma \ref{cons_track_err_prop}, condition \eqref{K_cond_scvx_a}  holds if 
\begin{equation}\label{K_cond_scvx1}
K\geq \frac{1}{\sqrt{1-\rho}}\cdot\frac{1}{2} \log\left(\frac{3\bar{D}_\delta\sqrt{3C_2}}{m\beta\underline{p}_1^{3/4}}\right).
\end{equation}
We now prove by induction that \eqref{p_recursion_firststage_scvx_final3} implies 
\begin{equation}\label{induction_assumption}
(p^\nu)^{1/4}\leq l_\nu\triangleq (p^0)^{1/4}-\frac{\tilde{c}}{4}\nu,\quad \forall \nu\geq 0.
\end{equation}
Clearly, \eqref{induction_assumption} holds for $\nu=0$.  Since the  RHS of \eqref{p_recursion_firststage_scvx_final3} is increasing (as a function of $p^\nu$) when $p^\nu\geq \left(3\tilde{c}/4\right)^4=1/(9\cdot 2^{8}C_2^2)$ (which holds since $p^\nu\geq \underline{p}_1$), then   $p^\nu\leq l_\nu^4$ implies
\begin{equation*}
p^{\nu+1}\leq l_\nu^4-\tilde{c}l_\nu^3,
\end{equation*}
which also implies $p^{\nu+1}\leq l_{\nu+1}^4$, as  by definition of $l^\nu$ in \eqref{induction_assumption},
\begin{equation*}
\begin{aligned}
l_{\nu}^4-l_{\nu+1}^4=\left(l_{\nu}-l_{\nu+1}\right)\left(l_{\nu}+l_{\nu+1}\right)\left(l_{\nu}^2+l_{\nu+1}^2\right)=\frac{\tilde{c}}{4}\left(l_{\nu}+l_{\nu+1}\right)\left(l_{\nu}^2+l_{\nu+1}^2\right)\leq  \tilde{c}~l_\nu^3.
\end{aligned}
\end{equation*}

\textbf{(b)}
Recall \eqref{synergy_net_opt_descent} (from the proof of Theorem \ref{thm:asymptotic_conv}), which    under Assumptions \ref{sconvex-case}-\ref{assump:weight} and condition \eqref{K_cond_asymptotic},  reads
\begin{align}
\label{synergy_net_opt_descent_recalled}
wp^{\nu+1}+(\tilde{\delta}^{\nu+1})^2 
\leq & wp^\nu+c_w(\tilde{\delta}^\nu)^2-  \frac{w\mu}{4m} ||\Delta\bx^\nu||^2.
\end{align}
Recall also Lemma \ref{Deltax_lowerbound_lemm} when condition \hyperlink{pDeltaX_cond}{$\texttt{C}$} is satisfied, which together with \eqref{pnuplus_pnu}, implies
\begin{equation}
\begin{aligned}
p^{\nu+1}
\leq &b_1\left(\sum_{i=1}^m\left\|\hxnu{\nu}_i-\bx_i^\nu\right\|^2\right)^2+\frac{8}{m\mu}\norm{\bepsilon^\nu}^2,\quad b_1\triangleq  \frac{\Lhessian^2+4M_{\max}^2}{m\mu}.
\end{aligned}
\label{Deltax_lowerbound_xhat_recalled_refined1_alt}
\end{equation}
Note that $p^{\nu+1}\geq \underline{p}_2$ implies that condition \hyperlink{pDeltaX_cond}{$\texttt{C}$} in Lemma \ref{Deltax_lowerbound_lemm}  holds, as proved next  by contradiction. Suppose $p^{\nu+1}\geq \underline{p}_2$ but $||\Delta\bx^\nu||<\omega_0$. Then  Lemma \ref{Deltax_lowerbound_lemm} yields
\begin{equation*}
\underline{p}_2\leq p^{\nu+1}\overset{\eqref{pnuplus_pnu}}{\leq} p^{\nu +}<\frac{ 144\beta^2}{m\mu}\cdot \omega_0^2+\frac{8}{m\mu}\norm{\bepsilon^\nu}^2\overset{(a)}{\leq} \frac{2\cdot 12^4}{\Lhessian^2+4M_{\max}^2}\cdot \frac{\beta^4}{m\mu}, 
\end{equation*}
implying $\beta>\mu$, which is in contradiction with the assumption; note that  (a) holds under (similar to derivation of \eqref{K_cond_scvx1}) 
\begin{equation}
\label{K_cond_scvx_a_half}
K\geq \frac{1}{\sqrt{1-\rho}}\cdot\frac{1}{2} \log\left(\frac{\bar{D}_\delta}{18\beta^2\omega_0^2}\right)\implies \frac{8}{m\mu}\norm{\bepsilon^\nu}^2\leq \frac{144\beta^2\omega_0^2}{m\mu}.
\end{equation}

Now since $x\mapsto x^h$ is subadditive for $0\leq h\leq 1$, i.e. $(a+b)^h\leq a^h+b^h$ for any $a,b\geq 0$, \eqref{Deltax_lowerbound_xhat_recalled_refined1_alt} together with \eqref{delta_upperbnd} imply 
\begin{equation}
\begin{aligned}
-\sum_{i=1}^m\left\|\Delta\bx_i^\nu\right\|^2
\leq 
& -b_1^{-\frac{1}{2}}\left(p^{\nu+1}\right)^{\frac{1}{2}}+\sqrt{\frac{8}{m\mu b_1}}~\tilde{\delta}^\nu.
\end{aligned}
\label{Deltax_lowerbound_xhat_recalled_refined2_v3}
\end{equation}

Combining \eqref{synergy_net_opt_descent_recalled} with \eqref{Deltax_lowerbound_xhat_recalled_refined2_v3} yields
\begin{equation*}
wp^{\nu+1}+(\tilde{\delta}^{\nu+1})^2 
\leq  wp^\nu+c_w(\tilde{\delta}^\nu)^2-  \frac{w\mu}{4m\sqrt{b_1}} \sqrt{p^{\nu+1}}+\frac{w\mu}{4m}\sqrt{\frac{8}{m\mu b_1}}\tilde{\delta}^\nu,
\end{equation*}
and since $\tilde{\delta}^\nu\leq \sqrt{\xi^\nu}\leq \sqrt{D_2},\forall \nu\geq 0$ (see the discussion in Subsec. \ref{sec:asymptotic_convergence_Step3}, proof of Theorem \ref{thm:asymptotic_conv}), we get
\begin{align}
\label{synergy_net_opt_quadraticphase_final2}
wp^{\nu+1}+(\tilde{\delta}^{\nu+1})^2 
\leq & wp^\nu-  \frac{w\mu}{4m\sqrt{b_1}} \sqrt{p^{\nu+1}}+C_3\tilde{\delta}^\nu,\quad C_3\triangleq \left(c_w\sqrt{D_2} +\frac{c_w\beta \mu}{4m}\sqrt{\frac{8}{m\mu b_1}}\right).
\end{align}
Since $p^{\nu+1}\geq \underline{p}_2$,   under (similar to derivation of \eqref{K_cond_scvx1}) 
\begin{equation}\label{K_cond_scvx_b}
K\geq \frac{1}{\sqrt{1-\rho}}\cdot\frac{1}{2} \log\left(\frac{64\bar{D}_\delta m^2b_1C_3^2}{c_w^2\beta^2\mu^2\underline{p}_2}\right) \implies C_3 \tilde{\delta}^\nu\leq \frac{w\mu\sqrt{\underline{p}_2}}{8m\sqrt{b_1}},
\end{equation}
  \eqref{synergy_net_opt_quadraticphase_final2} yields
\begin{equation*}
p^{\nu+1}+c  \sqrt{p^{\nu+1}} 
\leq  p^\nu,\quad c\triangleq \frac{\mu}{8m\sqrt{b_1}}.
\end{equation*}
Denote by $\tilde{p}^\nu\triangleq p^\nu/c^2$, then we get $\tilde{p}^{\nu+1}+\sqrt{\tilde{p}^{\nu+1}}\leq \tilde{p}^{\nu}$ which implies quadratic convergence when 
$p^{\nu+1}\geq \underline{p}_2$ and $\tilde{p}^\nu\leq 1\equiv p^\nu\leq c^2$.

\textbf{(c)}
Again recall \eqref{synergy_net_opt_descent}:
\begin{align}
\label{synergy_net_opt_descent_FinalStage}
wp^{\nu+1}+(\tilde{\delta}^{\nu+1})^2 
\leq & wp^\nu+c_w(\tilde{\delta}^\nu)^2-  \frac{w\max(\beta,\mu)}{4m} ||\Delta\bx^\nu||^2.
\end{align}
Invoking  Lemma \ref{Deltax_lowerbound_lemm} under    condition  \hyperlink{pDeltaX_cond_bar}{$\bar{\texttt{C}}$}  and $\tau_i=2\beta$, along with  \eqref{pnuplus_pnu} and \eqref{delta_upperbnd}, we have  \begin{equation}
\begin{aligned}
p^{\nu+1}
\leq & b_2\sum_{i=1}^m\norm{\hxnu{\nu}_i-\bx_i^\nu}^2+\frac{8}{m\mu}(\tilde{\delta}^\nu)^2,\quad b_2\triangleq \frac{144\beta^2}{m\mu}.
\end{aligned}
\label{Deltax_lowerbound_xhat_recalled_refined_S2_1}
\end{equation}
Combining \eqref{synergy_net_opt_descent_FinalStage} and \eqref{Deltax_lowerbound_xhat_recalled_refined_S2_1} yields 
\begin{equation}\label{h2}
\begin{aligned}
w\left(1+\frac{\max(\beta,\mu)}{4mb_2}\right)p^{\nu+1}+(\tilde{\delta}^{\nu+1})^2 
\leq &wp^\nu+
\left(c_w+\frac{2w\max(\beta,\mu)}{m^2\mu b_2}\right) (\tilde{\delta}^\nu)^2,
\end{aligned}
\end{equation} 
where by choosing $c_w$ to satisfy
\begin{equation}
\label{c_w_cond}
\begin{aligned}
& \left(c_w+\frac{2w\max(\beta,\mu)}{m^2\mu b_2}\right)\leq \left(1+\frac{\max(\beta,\mu)}{4mb_2}\right)^{-1}
\quad \overset{(a)}{\equiv}    & c_w \leq \left(1+\frac{2\beta\max(\beta,\mu)}{m^2\mu b_2}\right)^{-1}\left(1+\frac{\max(\beta,\mu)}{4mb_2}\right)^{-1},
\end{aligned}
\end{equation}
[where (a) is due to $w=c_w\beta$ defined in Sec. \ref{sec:asymptotic_convergence_Step3}],   \eqref{h2} becomes 
\begin{equation*}
\begin{aligned}
w\left(1+\frac{\max(\beta,\mu)}{4mb_2}\right)p^{\nu+1}+(\tilde{\delta}^{\nu+1})^2 
\leq &wp^\nu+
\left(1+\frac{\max(\beta,\mu)}{4mb_2}\right)^{-1} (\tilde{\delta}^\nu)^2,
\end{aligned}
\end{equation*} 
implying linear convergence of $\{\xi^\nu\}_\nu$ where
\begin{equation*}
\zeta^\nu\triangleq w\left(1+\frac{\max(\beta,\mu)}{4mb_2}\right)p^{\nu}+(\tilde{\delta}^{\nu})^2,
\end{equation*}
and decay rate 
\begin{equation}
\label{decay_rate}
\left(1+\frac{\max(\beta,\mu)}{4mb_2}\right)^{-1}=\left(1+\frac{1}{576}\cdot \frac{\mu\max(\beta,\mu)}{\beta^2}\right)^{-1}.
\end{equation}
Therefore, $\{p^\nu\}_\nu$ converges $Q$-linearly with rate \eqref{decay_rate}.

Now let us derive \eqref{Last_phase_cond_SCVX} that defines this region. The goal is to identify the region where  \hyperlink{pDeltaX_cond_bar}{$\bar{\texttt{C}}$} (cf. Lemma \ref{Deltax_lowerbound_lemm}) holds. Under the condition  (similar to derivation of \eqref{K_cond_scvx1})
\begin{equation}\label{K_cond_scvx_c}
K\geq \frac{1}{\sqrt{1-\rho}}\cdot\frac{1}{2} \log\left(\frac{4\bar{D}_\delta}{\beta\mu\omega_0^2}\right)\implies \frac{2(\tilde{\delta}^\nu)^2}{\beta\mu}\leq \frac{\omega_o^2}{2},
\end{equation}
and Lemma \ref{deltax_upperbnd}, there holds
\begin{equation*}
\frac{1}{m}\sum_{i=1}^m\norm{\Delta\bx_i^\nu}^2\leq \frac{8}{\mu}p^\nu+\frac{\omega_0^2}{2m},
\end{equation*}
which implies that \hyperlink{pDeltaX_cond_bar}{$\bar{\texttt{C}}$} is necessarily satisfied when
\begin{equation*}
p^\nu<\frac{\omega_0^2\mu}{16m}=\frac{9}{L^2+4M_{\max}^2}\cdot \frac{\beta^2\mu}{m}.
\end{equation*}

Finally, unifying the conditions on $K$ derived in \eqref{K_cond_asymptotic}. \eqref{K_cond_scvx1}, \eqref{K_cond_scvx_a_half}, \eqref{K_cond_scvx_b}, \eqref{K_cond_scvx_c}, $K$ must satisfy
\begin{equation}\label{K_cond_scvx}
K\geq \frac{1}{\sqrt{1-\rho}}\cdot\frac{1}{2} \log\left(\bar{D}_\delta\cdot \max\left\{\frac{16}{\bar{D}_\delta c_w},\frac{12^2m\Lgrad_{\max}^2}{\bar{D}_\delta c_w\beta\max(\beta,\mu)},\frac{3\sqrt{3C_2}}{m\beta\underline{p}_1^{3/4}},\frac{1}{18\beta^2\omega_0^2},\frac{64m^2b_1C_3^2}{c_w^2\beta^2\mu^2\underline{p}_2},\frac{4}{\beta\mu\omega_0^2}\right\}\right),
\end{equation}
where recall that $c_w>0$ must satisfy \eqref{c_w_cond}. 
\end{proof}


\subsection{Proof of Theorem \ref{thm:scvx_case_beta_leq_mu}}\label{subsec:scvx_beta_leq_mu}
Let $M_i=\Lhessian$ for all $i=1,\ldots,m$, and set the free parameter $\xi\geq 1$ (defined in Theorem \ref{Rate_scvx_THM}) to $\xi=100\sqrt{5}$, and  define the regions of convergence,
\begin{equation*}
\begin{aligned}
\text{(R0)}: & \quad  \Omega_0\leq p^\nu,
\\
\text{(R1)}: & \quad  \Omega_1\leq p^\nu < \Omega_0, 
\\
\text{(R2)}: & \quad \max(\varepsilon,\Omega_2)\leq p^\nu <  \Omega_1,
\\
\text{(R3)}: & \quad \varepsilon\leq p^\nu <  \max(\varepsilon,\Omega_2),
\end{aligned}
\end{equation*}
where
\begin{equation*}
\Omega_0=244\cdot D^2\mu,\quad \Omega_1=c^2/2=\frac{1}{640\Lhessian^2}\cdot\frac{\mu^3}{m}, \quad  \Omega_2=\underline{p}_2=\frac{2\cdot 12^4}{5L^2}\cdot \frac{\beta^2\mu}{m},
\end{equation*}
and $c$ and $\underline{p}_2$ are defined in Theorem \ref{Rate_scvx_THM}.
%
%

Using Theorem \ref{Rate_cvx_THM}, region ($\text{R0}$) takes at most 
$\sqrt{\frac{\Lhessian D}{\mu}}$ iterations. Now using Theorem \ref{Rate_scvx_THM}, region ($\text{R1}$) lasts at most $\nu_1$ iterations satisfying
\begin{equation*}
\begin{aligned}
& \left( \Omega_1\right)^{1/4}\geq  (\Omega_0)^{1/4}-\frac{\nu_1}{12\sqrt{3C_2}}
\impliedby &\nu_1\geq 480\sqrt{3\sqrt{5}}\cdot m^{1/4}\cdot  \sqrt{\frac{\Lhessian D}{\mu}}.
\end{aligned}				 
\end{equation*}
Let us conservatively consider scenarios $\Omega_1\geq \varepsilon\geq \Omega_2$ and $\varepsilon< \Omega_2$, then the  region of quadratic convergence ($\text{R2}$) lasts for at most
\begin{equation*}
 2\log\left(2\log\left(\min\left\{\frac{c^2}{\Omega_2},\frac{c^2}{\varepsilon}\right\}\right)\right)\leq 2\log\left[2\log\left[\min\left\{\frac{1}{128\cdot 12^4}\cdot \frac{\mu^2}{\beta^2},\frac{\mu^3}{320m\Lhessian^2}\cdot \frac{1}{\varepsilon}\right\}\right]\right]:\quad c^2\geq \Omega_2,\varepsilon\leq c^2,
\end{equation*}
iterations. Note that conditions $p^\nu\geq \underline{p}_2$ and $p^\nu<\underline{p}_3$ in Theorem \ref{Rate_scvx_THM} are sufficient conditions identifying the region of quadratic and linear rate (or more specifically \hyperlink{pDeltaX_cond}{$\texttt{C}$} and \hyperlink{pDeltaX_cond_bar}{$\bar{\texttt{C}}$} in Lemma \ref{Deltax_lowerbound_lemm}); note that $\underline{p}_2$ and $\underline{p}_3$ are identical up to multiplying constants. Hence, to obtain a valid complexity of overall performance, we pessimistically associate the region of linear rate ($\text{R3}$) with $\varepsilon<p^\nu\leq \max(\varepsilon,\Omega_2)$ rather than $\varepsilon<p^\nu\leq \max(\varepsilon,\underline{p}_3)$; therefore, this region at most lasts for $O(\beta/\mu\cdot \log(\max(\varepsilon,\Omega_2)/\varepsilon))$ iterations. Thus, since the number of communications per iteration  is $\tO{1/\sqrt{1-\rho}}$ [cf. \eqref{K_cond_asymptotic}, \eqref{K_cond_cvx}, \eqref{K_cond_scvx} and note that $\varepsilon=\Omega_0$ in \eqref{K_cond_cvx}], the overall complexity reads
\begin{equation*}
\tO{\frac{1}{\sqrt{1-\rho}}\left\{\sqrt{\frac{\Lhessian D}{\mu}}\left(1+m^{1/4}\right)+\log\left[\log\left[\frac{\mu^2}{\beta^2}\cdot \min\left\{1,\frac{\beta^2\mu}{m\Lhessian^2 }\cdot \frac{1}{\varepsilon}\right\}\right]\right]+\frac{\beta}{\mu}\log\left[\max\left(1,\frac{\beta^2\mu}{m\Lhessian^2}\cdot \frac{1}{\varepsilon}\right)\right]\right\}}
\end{equation*}
communications.

\subsection{The case of quadratic $f_i$  in Theorem \ref{thm:scvx_case_beta_leq_mu}}
\label{sec:scvx_case_beta_leq_mu_Qcase_proof}
Here we refine the proof of Theorem \ref{thm:scvx_case_beta_leq_mu} to enhance the rate when $\Lhessian=0$:
\begin{theorem}\label{thm:scvx_case_beta_leq_mu_Qcase}
Let  Assumptions \ref{sconvex-case}-\ref{assump:network} hold with $\Lhessian=0$ and $\beta<\mu$.  Denote by $D_p$ an upperbound of $p^0$, i.e.  $p^0\leq D_p$ for all $\nu\geq 0$. Also choose $M_i=\Theta(\mu^{3/2}/\sqrt{mD_p})$ sufficiently small (explicit condition is provided in  \eqref{M_cond_scvx_Qcase}) and $\tau_i=2\beta$ for all $i=1,\ldots,m$. If a reference matrix $\overline{W}$ satisfying Assumption \ref{assump:weight} is used   in steps \eqref{DiRegINA_pseudocode}-\eqref{grad_track_update},  with $\rho  \triangleq  \lambda_{\max}(\overline{W} - \bJ) < 1$ and  $K=\tO{1/\sqrt{1-\rho}}$ (explicit condition is provided in \eqref{K_cond_scvx}),	then  for any given $\varepsilon>0$, \alg returns a solution with $p^\nu\leq \varepsilon$  after total
	\begin{equation*}
	\tO{\frac{1}{\sqrt{1-\rho}}\cdot \left\{\log\log\left(\frac{D_p}{\varepsilon
		}\right)+\frac{\beta}{\mu}\log\left(\frac{D_p\beta^2}{\mu^2\varepsilon}\right)\right\}}
	\end{equation*}
	communications.  {Note that when $\beta = O(1/\sqrt{n})$, $\epsilon=\Omega(V_N)$ and $n\geq m$, the above communication complexity reduces to
	\begin{equation*}
	\tO{\frac{1}{\sqrt{1-\rho}}\cdot \left\{\log\log\left(\frac{D_p}{V_N
		}\right)\right\}}.
	\end{equation*}}
\end{theorem}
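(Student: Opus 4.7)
The plan is to leverage the vanishing of the Taylor remainder when $\Lhessian = 0$ to shrink the cubic regularization parameter $M_i$. Making $M_i$ small enlarges the quadratic-convergence region $\{p:p\leq c^2\}$ of Theorem \ref{Rate_scvx_THM}(b), so by choosing $M_i = \Theta(\mu^{3/2}/\sqrt{m D_p})$ we can arrange $c^2 \geq 2 D_p \geq 2 p^0$. Consequently the quadratic-contraction region of (b) covers the entire initial sublevel set, and the slow optimality regions (R0)--(R1) of Theorem \ref{thm:scvx_case_beta_leq_mu} can be skipped altogether.

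Concretely, after the substitutions $\Lhessian=0$ and $M_{\max}=\Theta(\mu^{3/2}/\sqrt{m D_p})$ (together with $\beta\leq\mu$ to absorb the factor $(1+4\beta/\mu)^4$), the constants of Theorem \ref{Rate_scvx_THM} simplify to $c^2=\Theta(D_p)$, $\underline{p}_1=\Theta(D_p)$, and $\underline{p}_2,\underline{p}_3=\Theta(D_p\beta^2/\mu^2)$. Tuning the absolute constant inside $M_i$ so that $c^2\geq 2 D_p$ makes the hypothesis $p^{\nu-1}\leq c^2$ of (b) hold at $\nu=1$, and this hypothesis is self-propagating because (b) forces $\tilde p^\nu\leq(\tilde p^{\nu-1})^2\leq \tilde p^{\nu-1}$ whenever $\tilde p^{\nu-1}\leq 1$. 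Iterating (b) from $\tilde p^0\leq 1/2$ yields $\tilde p^\nu\leq (1/2)^{2^\nu}$, so only $\nu_1=O(\log\log(D_p/\varepsilon))$ steps are needed either to reach $p^{\nu}\leq\varepsilon$ (if $\varepsilon\geq\underline{p}_2$) or to enter the linear-rate region $p^\nu<\underline{p}_3$. In the latter case, Theorem \ref{Rate_scvx_THM}(c) drives $p^\nu$ down to $\varepsilon$ in at most $\nu_2=\tO{(\beta/\mu)\log(D_p\beta^2/(\mu^2\varepsilon))}$ further iterations. Multiplying $\nu_1+\nu_2$ by the $K=\tO{1/\sqrt{1-\rho}}$ communication rounds per iteration gives the stated communication complexity. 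The ``reduces to'' corollary then follows immediately by noting that under $\beta=O(1/\sqrt{n})$, $\varepsilon=\Omega(V_N)$ and $n\geq m$ one has $(\beta/\mu)\log(D_p\beta^2/(\mu^2\varepsilon))=\tO{1}$, so only the $\log\log$ term survives.

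The main obstacle I anticipate is not the qualitative argument but verifying that the small choice $M_i=\Theta(\mu^{3/2}/\sqrt{m D_p})$ does not silently inflate the communication requirement \eqref{K_cond_scvx}, several of whose terms depend polynomially on $M_{\max}$ through $C_2,\underline{p}_1,\underline{p}_2,\omega_0,b_1$. A direct power-count of the six entries inside the $\max$ shows that each one either scales as a positive power of $M_{\max}$ (and hence shrinks) or enters only inside a logarithm (and hence is absorbed by $\tO{\cdot}$), so $K$ retains the announced $\tO{1/\sqrt{1-\rho}}$ form. A secondary minor point is the bounded-factor gap between $\underline{p}_3$ and $\underline{p}_2$ in which neither (b) nor (c) strictly applies; this is bridged by at most $O(1)$ buffer steps and is likewise absorbed in the $\tO{\cdot}$.
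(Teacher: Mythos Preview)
Your proposal is correct and follows essentially the same approach as the paper: exploit $\Lhessian=0$ to shrink $M_i=\Theta(\mu^{3/2}/\sqrt{mD_p})$ so that $p^0\leq c^2/2$, thereby starting directly in the quadratic-convergence region of Theorem~\ref{Rate_scvx_THM}(b), and then switch to the linear rate of part~(c) once $p^\nu$ drops to the order of $\underline p_2=\Theta(D_p\beta^2/\mu^2)$. Your extra remarks on the $K$-dependence and on bridging the $\underline p_3$--$\underline p_2$ gap go slightly beyond what the paper makes explicit (the paper simply notes that $\underline p_2$ and $\underline p_3$ differ by a constant and pessimistically uses $\underline p_2$ as the entry point to the linear region), but the core argument is identical.
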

\begin{proof}
Let us specialize  the results established in Theorem \ref{Rate_scvx_THM} (in particular case \textbf(b)-\textbf(c)). Note that, since $\Lhessian=0$, we can impose $p^0\leq c^2/2$ by a proper choice of $M_i$, allowing \alg to circumvent the first region (associated with case (a) in Theorem \ref{Rate_scvx_THM}) and start off in the quadratic rate region. Hence we only need to derive a sufficient condition for $p^0\leq c^2/2$. Let us first consider case (b): if $M_i=\Theta(\mu^{3/2}/\sqrt{mD_p}), \forall i$, sufficiently small,
\begin{equation}\label{M_cond_scvx_Qcase}
 M_{i}\leq \frac{\mu^{3/2}}{16\sqrt{2mD_p}},~\forall i\implies  p^0\leq \frac{\mu^3}{512mM_{\max}^2}\implies p^0\leq c^2/2,
\end{equation}
where $M_{\max}\triangleq \max_{i\in[m]}~M_i$.
Let us also evaluate the precision achieved in case (b), i.e. $\underline{p}_2$: denote by $\underline{C}_M$ such that $M_i\geq \underline{C}_M\mu^{3/2}/\sqrt{mD_p},\forall i$, then  
\begin{equation*}
\underline{p}_2\triangleq \frac{12^4}{2M_{\max}^2}\cdot \frac{\beta^2\mu}{m}\leq \frac{12^4}{2\underline{C}_M^2}\cdot \frac{\beta^2D_p}{\mu^2}.
\end{equation*}
Therefore the number of iterations to reach $\varepsilon=\Omega(\underline{p}_2)$ is $O(\log\log(c^2/\underline{p}_2))=\log\log(D_p/\varepsilon)$, and since $K=\tO{1/\sqrt{1-\rho}}$, the total number of communication will be $\tO{1/\sqrt{1-\rho}\cdot \log\log(D_p/\varepsilon)}$. 

Now let us derive the complexity when $\varepsilon=O(\underline{p}_2)$ (i.e. case (c) in Theorem \ref{Rate_scvx_THM}). Setting $\Lhessian=0$ and following similar arguments, for arbitrary precision $\varepsilon>0$, we obtain a communication complexity $\tO{1/\sqrt{1-\rho}\cdot \left\{\log\log(D_p/\varepsilon)+\beta/\mu\log(\beta^2D_p/(\mu^2\varepsilon))\right\}}$.
\end{proof}

\subsection{Proof of Corollary  \ref{corr:scvx_case_beta_leq_mu_initialization_iid}}
\label{sec:scvx_case_beta_leq_mu_initialization_proof}
Let us customize the rate established in Theorem \ref{Rate_scvx_THM} (in particular case \textbf(b)-\textbf(c)). We derive a sufficient condition for $p^0\leq c^2/2$ which guarantees that the initial point is in the region of quadratic convergence. Using initialization policy \eqref{eq:initialization}, there holds $p^0\leq C_\Delta/n$ for some $C_\Delta>0$. Hence, under 
\begin{equation*}
 n\geq 
\frac{640C_\Delta\Lhessian^2}{\mu^3} \cdot m\implies  p^0\leq \frac{\mu^3}{640mL^2}  \implies p^0\leq c^2/2,
\end{equation*}
 \alg converges quadratically to the precision 
\begin{equation*}
\underline{p}_2\triangleq \frac{2\cdot 12^4}{5\Lhessian^2}\cdot \frac{\beta^2\mu}{m}.
\end{equation*}
By $\beta=O(1/\sqrt{n})$, $\underline{p}_2=O(V_N)$. Hence, since $K=\tO{1/\sqrt{1-\rho}}$, the total number of communication will be  $\tO{1/\sqrt{1-\rho}\cdot \log\log(\mu^3/(m\Lhessian^2 V_N))}$.



\section{Proof of Theorem \ref{thm:scvx_case_beta_geq_mu} }\label{subsec:scvx_beta_geq_mu}

Let $M_i=\Lhessian$ for all $i=1,\ldots,m$, and set the free parameter $\xi=50\beta/(3\mu)$ (defined in Theorem \ref{Rate_scvx_THM}) and define the regions of convergence, 
\begin{equation*}
\begin{aligned}
\text{($\overline{\mbox{R0}}$)}: & \quad  \overline{\Omega}_0\leq p^\nu,
\\
\text{($\overline{\mbox{R1}}$)}: & \quad  \overline{\Omega}_1\leq p^\nu < \overline{\Omega}_0, 
\\
\text{($\overline{\mbox{R2}}$)}: & \quad \varepsilon\leq p^\nu <  \overline{\Omega}_1,
\end{aligned}
\end{equation*}
where
\begin{equation*}
\overline{\Omega}_0=244\cdot D^2\mu,\quad \overline{\Omega}_1=\frac{0.9}{L^2}\cdot \frac{\beta^2\mu}{m}. 
\end{equation*}
Using Theorem \ref{Rate_cvx_THM}, region ($\overline{\mbox{R0}}$)  takes at most 
$\sqrt{\frac{\Lhessian D}{\mu}}$ iteration; note that $\mu=\Omega(\beta^2)$ by assumption $n\geq m$, thus $\overline{\Omega}_0=\Omega(\beta^2 \cdot 2\Lhessian D^3)$. Now using Theorem \ref{Rate_scvx_THM}, region ($\overline{\mbox{R1}}$) lasts at most $\nu_1$ iteration satisfying 
\begin{equation*}
\begin{aligned}
& \left( \overline{\Omega}_1\right)^{1/4}\geq (\overline{\Omega}_0)^{1/4}-\frac{\nu_1}{12\sqrt{3C_2}}
\impliedby &\nu_1\geq 240\sqrt{2}\cdot \frac{\sqrt{\beta\Lhessian D\sqrt{m}}}{\mu}.
\end{aligned}				 
\end{equation*}
Finally, by case (c) in Theorem \ref{Rate_scvx_THM}, region ($\overline{\mbox{R2}}$)  lasts for $O(\beta/\mu\cdot \log(\overline{\Omega}_1/\varepsilon))$. Thus, since communication cost per iteration is $\tO{1/\sqrt{1-\rho}}$ [cf. \eqref{K_cond_asymptotic},  \eqref{K_cond_scvx}], the overall complexity is
\begin{equation*}
\tO{\frac{1}{\sqrt{1-\rho}}\left\{\sqrt{\frac{\Lhessian D}{\mu}}\left(1+m^{1/4}\cdot \sqrt{\frac{\beta}{\mu}}\right)+\frac{\beta}{\mu}\log\left(\frac{\beta^2\mu}{m\Lhessian^2}\cdot \frac{1}{\varepsilon}\right)\right\}}.
\end{equation*}

\section{The case of quadratic $f_i$ in Theorem \ref{thm:scvx_case_beta_geq_mu}}\label{subsec:scvx_beta_geq_mu_Qcase}
 {
\begin{theorem}\label{thm:scvx_case_beta_geq_mu_QUAD}
Instate the setting of Theorem \ref{thm:scvx_case_beta_geq_mu} where $\Lhessian=0$. Then,    the total number of communications for \alg to make   $p^\nu\leq \varepsilon$ reads 
\begin{equation*} 
\hspace{-0.2cm}\widetilde{\mathcal{O}}\left({\frac{1}{\sqrt{1-\rho}}\cdot \frac{\beta}{\mu}\log\Big(\frac{1}{\varepsilon}\Big)}\right).
\end{equation*}	
When $\beta = O(1/\sqrt{n})$, $\epsilon=\Omega(V_N)$ and $n\geq m$, the above communication complexity reduces to
		\begin{equation*}
	\widetilde{\mathcal{O}}\left({\frac{1}{\sqrt{1-\rho}}\cdot m^{1/2}\cdot \log\Big(\frac{1}{V_N}\Big)}\right).
	\end{equation*}
\end{theorem}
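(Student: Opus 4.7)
The plan is to specialize Theorem \ref{Rate_scvx_THM} to the quadratic setting $\Lhessian=0$, mirroring the strategy used in the proof of Theorem \ref{thm:scvx_case_beta_leq_mu_Qcase} in Sec.~\ref{sec:scvx_case_beta_leq_mu_Qcase_proof} but now exploiting the standing assumption $\beta\geq\mu$ of Theorem \ref{thm:scvx_case_beta_geq_mu}. The key leverage is that, with $\Lhessian=0$, the lower-bound requirement $M_i\geq \Lhessian$ collapses to $M_i\geq 0$, so we are free to pick $M_{\max}$ arbitrarily small. This lets us enlarge the threshold $\underline{p}_3=9\beta^2\mu/\bigl(m(L^2+4M_{\max}^2)\bigr)$ delimiting case (c) of Theorem \ref{Rate_scvx_THM} until $p^0$ already lies inside it, so that the slow (case (a)) phase can be bypassed entirely. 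Case (b) is vacuous here because it is stated under the exclusive assumption $\beta\leq\mu$.

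Concretely, first I would fix $D_p\geq p^0$ and pick $M_i=\Theta\!\bigl(\beta\sqrt{\mu/(mD_p)}\bigr)$, sufficiently small so that $\underline{p}_3\geq D_p\geq p^0$. Because Theorem \ref{Rate_scvx_THM}(c) guarantees that $\{p^\nu\}$ is non-increasing in that region, the invariant $p^\nu<\underline{p}_3$ propagates to all $\nu\geq 0$. Then I would read off the contraction factor from \eqref{decay_rate_thm}: using $\max(\beta,\mu)=\beta$,
\begin{equation*}
p^{\nu+1}\leq \Bigl(1+\tfrac{1}{576}\tfrac{\mu\max(\beta,\mu)}{\beta^2}\Bigr)^{-1}\!p^\nu=\Bigl(1+\tfrac{\mu}{576\beta}\Bigr)^{-1}\!p^\nu,
\end{equation*}
so reaching $p^\nu\leq\varepsilon$ requires $O\!\bigl((\beta/\mu)\log(p^0/\varepsilon)\bigr)$ iterations. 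Multiplying by the per-iteration communication cost $K=\widetilde{\mathcal{O}}(1/\sqrt{1-\rho})$ from \eqref{K_cond_scvx} yields the first stated complexity $\widetilde{\mathcal{O}}\!\bigl(\frac{1}{\sqrt{1-\rho}}\cdot\frac{\beta}{\mu}\log(1/\varepsilon)\bigr)$.

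For the specialization, I would substitute the standard scalings: under $\beta=O(1/\sqrt{n})$ paired with the regularization $\mu=\Theta(1/\sqrt{mn})$ that underlies the regime $\beta\geq\mu$ with $n\geq m$, one has $\beta/\mu=O(\sqrt{m})$; and $\varepsilon=\Omega(V_N)=\Omega(\beta^2)=\Omega(1/n)$ gives $\log(1/\varepsilon)=O(\log(1/V_N))$. Plugging these in produces $\widetilde{\mathcal{O}}\!\bigl(\frac{1}{\sqrt{1-\rho}}\cdot m^{1/2}\log(1/V_N)\bigr)$. The only real obstacle is bookkeeping: one must verify that, with the specific scaling of $M_{\max}$ chosen above and $\Lhessian=0$, the unified condition on $K$ in \eqref{K_cond_scvx} still reduces to $\widetilde{\mathcal{O}}(1/\sqrt{1-\rho})$. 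This amounts to checking that every problematic factor coming from $\underline{p}_2,\underline{p}_3,C_2,\bar{D}_\delta$ enters only logarithmically in $K$, which is immediate since those terms are polynomial in the problem constants; and to noting that the constraint \eqref{c_w_cond} on $c_w$ remains satisfiable, because for $\beta\geq\mu$ the ratio $\mu\max(\beta,\mu)/\beta^2=\mu/\beta\leq 1$ so the right-hand side is bounded away from $0$.
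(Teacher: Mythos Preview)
Your argument is correct and reaches the same endpoint as the paper, but you take an unnecessary detour. Both proofs skip cases (a) and (b) of Theorem~\ref{Rate_scvx_THM} and apply the linear rate~\eqref{decay_rate_thm} of case (c) from the very first iteration, then multiply by $K=\widetilde{\mathcal{O}}(1/\sqrt{1-\rho})$. The difference is in how case (c) is activated. The paper simply keeps the setting of Theorem~\ref{thm:scvx_case_beta_geq_mu}, i.e.\ $M_i=\Lhessian=0$; then $\omega_0=12\beta/\sqrt{L^2+4M_{\max}^2}=+\infty$, so condition~$\bar{\texttt{C}}$ of Lemma~\ref{Deltax_lowerbound_lemm} holds for every $\nu$ automatically, and the auxiliary condition~\eqref{K_cond_scvx_c} on $K$ drops out entirely (only~\eqref{K_cond_asymptotic} is needed). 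Your route instead introduces a strictly positive $M_i=\Theta(\beta\sqrt{\mu/(mD_p)})$ and then argues via the threshold $\underline{p}_3$ plus an invariant. That works, but it (i) departs from the stated hypothesis ``instate the setting of Theorem~\ref{thm:scvx_case_beta_geq_mu}'', which fixes $M_i=\Lhessian$, and (ii) forces you to carry the extra bookkeeping on $D_p$, $\underline{p}_3$, and the full condition~\eqref{K_cond_scvx}. The paper's observation that $M_{\max}=0\Rightarrow\omega_0=+\infty$ makes all of that unnecessary.
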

}

\begin{proof}
We customize case \textbf{(c)} in Theorem \ref{Rate_scvx_THM}, when $\Lhessian=0$. Note that \hyperlink{pDeltaX_cond_bar}{$\bar{\texttt{C}}$} in Lemma \ref{Deltax_lowerbound_lemm} holds for all $\nu\geq 0$ and  condition \eqref{K_cond_scvx_c} is no longer required. Therefore,  the algorithm converges linearly with rate \eqref{decay_rate_thm} and returns a solution within $\varepsilon$  precision  within $O\left( \beta/\mu\cdot\log(1/\varepsilon)\right)$ iterations and since $K=\tO{1/\sqrt{1-\rho}}$ [cf. \eqref{K_cond_asymptotic}] , the total number of  required communications is $\tilde{O}\left(1/\sqrt{1-\rho}\cdot \beta/\mu\cdot\log(1/\varepsilon)\right)$.
\end{proof}

\end{document}